\crefname{ineq}{inequality}{inequalities}
\crefname{probl}{problem}{problems}
\newtheorem{prob}{Problem}[section] 
\newtheorem{lem}[prob]{Lemma} 
\newtheorem{thm}[prob]{Theorem} 
\newtheorem{defin}[prob]{Definition}
\newtheorem{prop}[prob]{Proposition}
\newtheorem{rem}[prob]{Remark}
\newtheorem{assump}[prob]{Assumption}
\newtheorem{coro}[prob]{Corollary}
\newtheorem{examp}[prob]{Example}
\newtheorem{nota}[prob]{Notation}
\crefname{prob}{problem}{problems}
\crefname{lem}{lemma}{lemmas} 
\crefname{thm}{theorem}{theorems} 
\crefname{defin}{definition}{definitions}
\crefname{memo}{memo}{memos}
\crefname{prop}{proposition}{propositions}
\crefname{rem}{remark}{remarks}
\crefname{coro}{corollary}{corollaries}
\crefname{examp}{example}{examples}
\crefname{nota}{notation}{notations}
\newcommand{\R}{\mathbb{R}}
\newcommand{\adm}{\mathcal{A}} 
\newcommand{\fid}{\mathcal{F}} 
\newcommand{\for}{K} 
\newcommand{\reg}{\mathcal{R}} 
\newcommand{\cad}{c\`{a}dl\`{a}g}
\newcommand{\DE}{\mathcal{D}_E}
\newcommand{\C}{\Omega}
\newcommand{\T}{\mathcal{T}_*}
\newcommand{\probs}{\mathcal{P}}
\newcommand{\wass}{\mathcal{W}_1(\Omega)}
\newcommand{\B}{\mathcal{B}}
\newcommand{\V}{L_w^2(I;\M(\C))}
\newcommand{\Vstar}{L_w^2(I;C(\C))}
\newcommand{\E}{\mathcal{E}}
\newcommand{\M}{\mathcal{M}}
\newcommand{\D}{\mathcal{D}}
\newcommand{\sol}{\mathcal{S}}
\newcommand{\ext}{\textup{ext}}
\newcommand{\lin}{\textup{lin}}
\newcommand{\rec}{\textup{rec}}
\newcommand{\Leb}{\mathcal{L}}
\newcommand{\ws}{\mathrel{\ensurestackMath{\stackon[1pt]{\rightharpoonup}{\scriptstyle\ast}}}} 
\newcommand{\mres}{\mathbin{\vrule height 1.3ex depth 0pt width 0.13ex\vrule height 0.13ex depth 0pt width 1ex}} 
\DeclareMathOperator*{\esssup}{ess\,sup}
\DeclareMathOperator*{\essvar}{ess\,var}
\DeclareMathOperator*{\argmax}{arg\,max}
\DeclareMathOperator*{\argmin}{arg\,min}
\DeclareMathOperator*{\var}{var}
\newcommand{\J}{\mathrm{D}}
\newcommand{\e}{\mathrm{d}}
\title{Sparsity for dynamic inverse problems on Wasserstein curves with bounded variation}
\author{Marcello Carioni\thanks{Faculty of Electrical Engineering, Mathematics and Computer Science, University of Twente, m.c.carioni@utwente.nl}\qquad\qquad\qquad Julius Lohmann\thanks{International Research Fellow of Japan Society for the Promotion of Science, Department of Mathematics, Institute of Science Tokyo, lohmann.j.aa@m.titech.ac.jp}}
\begin{document}
	\maketitle
	\noindent
	\begin{abstract}
	We investigate a dynamic inverse problem using a regularization which implements the so-called Wasserstein-$1$ distance. It naturally extends well-known static problems such as lasso or total variation regularized problems to a (temporally) dynamic setting. Further, the decision variables, realized as BV curves, are allowed to exhibit discontinuities, in contrast to the design variables in classical optimal transport based regularization techniques. We prove the existence and a characterization of a sparse solution. Further, we use an adaption of the fully-corrective generalized conditional gradient method to experimentally justify that the determination of BV curves in the Wasserstein-$1$ space is numerically implementable. 
	\end{abstract}
	\textbf{Keywords}: BV curves, conditional gradient methods, dynamic inverse problems, optimal transport, sparse optimization, Wasserstein distance
	\tableofcontents
	
	\section{Introduction}
    \label{Introduction}
    Consider an \textit{inverse problem} composed of a \textit{fidelity term} and \textit{regularization term}, 
        \begin{equation*}
            \inf_{v\in V}F(Av)+\alpha R(v),
        \end{equation*}
        where $\alpha\in\R_+^*$ is a \textit{regularization parameter}, $A:V\to W$ is a \textit{forward operator} from a \textit{model space} $V$ to a \textit{data space} $W$, $F:W\to\R\cup\{\infty\}$ is a \textit{fidelity}, and $R$ is a \textit{regularizer}.
        
        In this article, we are particularly interested in so-called \textit{sparse optimization} where the regularizer $R$ enforces a `sparsity property'. We briefly recall some well-known instances in this context. 
        The first is termed \textit{$\ell^1$-regularization} or \textit{lasso} \cite{T96}, for example in the formulation
        \begin{equation*}
        \inf_{v\in\R^N}\|Av-f\|_2^2+\alpha\|v\|_{1},
        \end{equation*}
        where $A\in\R^{N\times N}$, $f\in\R^N$, and $\|\cdot\|_{p}$ denotes the $\ell^p$-norm, that is $\|v\|_{p}^p=|v_1|^p+\ldots+|v_N|^p$. If $A$ is orthonormal, then it is straightforward to check that the solution is unique and given by $S_{\alpha/2}(A^\intercal f)$, where thresholding operator $S_{\alpha/2}$ is applied component-wise and given by 
        \begin{equation*}
            S_{\alpha/2}(x)=\begin{cases*}
                0&if $x\leq\alpha/2$,\\
                x-\textup{sign}(x)\alpha/2&else.
            \end{cases*}
        \end{equation*}
        Hence, the larger $\alpha$, the more zero entries in the solution, which is called \textit{sparsity} in this context. A variant of this problem is given by (see for instance \cite[Ch.\:6]{MS12}) 
        \begin{equation*}
        \inf_{v\in\R^N}\|Av-f\|_2^2+\alpha\| Mv\|_{1},
        \end{equation*}
        where $M\in\R^{M\times N}$. For example, one can take matrix $M=\bigl(\begin{smallmatrix}
            M_1\\ M_2
        \end{smallmatrix}\bigl)\in\R^{2N\times N}$, with $M_1,M_2\in\R^{N\times N}$, as a discretization of the two-dimensional gradient operator applied to some grayscale image with $N=N_1N_2$ pixels stored in the vector $v$. Then the problem can be seen as a discretization of so-called \textit{total variation regularization} (cf.\:\cite{ROF92}). The regularizer $R(v)=\| Mv\|_{1}$ then enforces sparsity of the (discrete) gradient which particularly yields sharp edges in the image (which is to be reconstructed from possibly noisy data $f$). A solution can be obtained by reformulating the problem as a quadratic program \cite{LS04}. In a spatially continuous setting, one may replace $\R^N$ by some non-empty bounded Lipschitz domain $U\subset\R^n$ and regularizer $v\mapsto\| Mv\|_{1}$ by the total variation of a BV function \cite{bredies2020sparsity},
        \begin{equation*}
            \inf_{v\in BV(U)}F(Av)+|\mathrm{D}v|(U),
        \end{equation*}
        where we assume that $A:BV(U)\to\R^m$ is continuous and linear with $A(BV(U))=\R^m$ and $F:\R^m\to\R\cup\{\infty\}$ is coercive, convex, lower semicontinuous, and proper. In this setting, there exists a sparse solution \cite[Thm.\:4.8]{bredies2020sparsity}
        \begin{equation*}
            v^{opt}=c+\sum_{i=1}^L\frac{c^i}{|\mathrm{D}1_{E_i}|(U)}1_{E^i},
        \end{equation*}
        where $c\in\R,c_i\in\R_+^*,L\leq\textup{dim}(\R^m\slash A(\R))$, and the $E_i\subset U$ are \textit{simple} (see \cite[Def.\:4.4 \& 4.5]{bredies2020sparsity}), in particular each $E^i$ cannot be decomposed into two sets with positive volume such that the sum of their perimeters equals the perimeter of $E^i$, that is $|\mathrm{D}1_{E^i}|(U)$ ($1_{E^i}$ denoting the characteristic function of $E^i$). Hence, sparsity in this configuration can be interpreted as being piecewise constant, where `piecewise' is relative to the simple sets $E^i$ (respectively their intersections). Recall that, by the Riesz representation theorem and the definition of $|\mathrm{D}v|(U)$, we can interpret the distributional gradient of $v\in BV(U)$ as a vector-valued Radon measure $\mathrm{D}v\in\mathcal{M}(U;\R^n)$. For general vector-valued Radon measures, one may consider the problem in \cite{bredies2013inverse},
        \begin{equation*}
        \inf_{v\in \mathcal{M}(X;\R^k)}\|Av-g\|_H^2+\alpha\|v\|_\mathcal{M},
        \end{equation*}
        where $X$ is a locally compact and separable metric space, $H$ is a Hilbert space ($g\in H$), $A:\mathcal{M}(X;\R^k)\to H$ is the adjoint of some continuous and linear $A_*:H\to C_0(X;\R^k)$, and $\|\cdot\|_\mathcal{M}$ denotes the total variation norm. For $X\subset\R^n$ non-empty, bounded, and open, $k=1$, finite-dimensional $H$, and $A(\mathcal{M}(X;\R))=H$, there exists a minimizer which can be written \cite[Thm.\:4.2]{bredies2020sparsity}
        \begin{equation*}
         v^{opt}=\sum_{i=1}^N\lambda^i\delta_{x^i},
        \end{equation*}
        where $N\leq\textup{dim}(H),\lambda^i\in\R$, and $x^i\in X $ ($\delta_{x^i}$ denotes the Dirac measure centered at $x^i$).
        
        In this paper, we investigate sparsity for a \textit{dynamic inverse problem} whose regularizer is related to the above problems. As a motivation, recall that the above regularizers enforce the decision variables to be concentrated on few simple geometric objects, for example time points of a discrete signal $v\in\R^N$ if $R(v)=\|v\|_{1}$, spikes of $v\in\M(X;\R)$ if $R(v)=\|v\|_{\M}$, adjoint pixels of $v\in\R^N$ if $R(v)=\|M_1v\|_{1}+\|M_2v\|_{1}$, or simple sets in the support of $v\in BV(U)$ if $R(v)=|\mathrm{D}v|(U)$. In the dynamic, time-dependent case, this \textit{simpleness} should be reflected in objects which evolve over time. Another property of the above regularizers is that they separate the contributions of the different geometric objects: $\ell^1$-norm $R=\|\cdot\|_{\ell^1}$ is a sum over the entries, total variation $R=\|\cdot\|_{\M}$ is a supremum over partitions (this also applies to $R(v)=|\mathrm{D}v|(U)$ if $\mathrm{D}v$ is interpreted as a vector-valued measure). Therefore, this \textit{separating} property should be observed in the dynamics as well.
        
        A natural setup which implements the above mentioned simpleness and separating can be obtained by considering a well-known distance from optimal transport theory: We have seen that $R(v)=\|v\|_\mathcal{M}(X)$ is used in the case of vector-valued Radon measures $v\in \mathcal{M}(X;\R^k)$. This norm actually appears in the Beckmann formulation of the \textit{Wasserstein-$1$ distance} \cite[Thm.\:4.6]{San} between (compactly supported) probability measures $\rho^+,\rho^-\in\probs(\R^n)$,
    \begin{equation*}
    W_1(\rho^+,\rho^-)=\min_v\|v\|_\M,  
    \end{equation*}
    where the minimum is over vector-valued Radon measures $v\in\M(\R^n;\R^n)$ with distributional divergence equal to $\rho^+-\rho^-$. One can restrict this minimization to compactly supported and `loop-free' $v$ and each candidate minimizer can be interpreted as a continuous curve $t\mapsto\mu_t\in(\probs(\R^n),W_1)$ (see \cref{Discussion}), where $t$ represents a time variable. Such $\mu$ and their evolution with respect to $W_1$ are a reasonable choice for a dynamic inverse problem implementing the above characteristics which we will briefly clarify. First, the Wasserstein-$1$ distance favors trajectories on which there is no direct interaction between the different mass particles which reflects the desired simpleness. More specifically, the Wasserstein-$1$ distance minimizes $\int|x-y|\e\pi(x,y)$ over $\pi$, where $|x-y|\e\pi(x,y)$ is the (infinitesimal) transportation cost (in terms of Euclidean distance) of moving (infinitesimal) amount of mass $\e\pi(x,y)$ from $x$ to $y$ --- there is no efficiency gain when particles bundle because the transportation cost is linear in the mass. Regularization with variation $\textup{var}(\mu)$ would ensure that the total accumulated travel distance of all mass particles is kept small. In particular, if the curve $\mu$ is only concentrated in one (Euclidean) curve in $\R^n$, then it will tend to be close to a traverse in the corresponding dynamic inverse problem (in a discrete setting with data measured at finitely many time points). Second, the separating property is given by Smirnov's decomposition \cite[Thm.\:C]{S}: one can write the total variation $\|v\|_\M$ (where $v$ can be interpreted as a normal $1$-current in $\R^n$ whose boundary is equal to $\rho^--\rho^+$) as an integral over the contributions of the different particle trajectories. We do not only allow for continuous curves (which may show diffusive behaviour), but also curves with jumps. The possibility to jump allows for the adjustment to heavily scattered data even on a small time scale --- however, such jumps are penalized through the regularization. This approach (and the resulting weak assumption on the regularity of a curve) extends the optimal transport based regularization techniques used so far, as we will highlight in more detail below. Finally, as in the above examples, the total mass should also appear in the regularizer. Hence, we consider $\R_+\probs(\R^n)$ instead of $\probs(\R^n)$ (the Wasserstein-$1$ distance apparently just rescales). Our proposed regularizer of $\mu\in \R_+BV([0,T];(\probs(\C),W_1))$ ($\Omega\subset\R^n$ compact and convex, $[0,T]$ some time interval) is given by
    \begin{equation*}
    T\omega(\mu)+\essvar(\mu),
    \end{equation*}
    where $T\omega(\mu)\in\R_+$ represents the time integral of the temporally constant mass $\omega(\mu)$ of $\mu$ and $\essvar(\mu)$ is its essential variation (for convenience, we normalize the time interval and add regularization parameters to both terms). We show that this regularizer enforces a sparsity property similar to those listed above. In particular, we prove the existence and representation of a sparse minimizer for the corresponding inverse problem under mild assumptions. We also provide numerical experiments based on an adaption of the algorithm proposed in \cite{bredies2024asymptotic} (employing our sparse characterization). We use a discretization procedure (via temporal deblurring) which suits to our class of admissible paths. Our numerical experiments demonstrate that sparse or diffuse ground truths with jumps can be accurately reconstructed.
    
    We mention that in \cite{BBFA23} the authors study a regularizer defined on normal $1$-currents in some non-empty, bounded, and open subset of $\R^n$. The regularizer is given as the sum of mass (which is equal to $\|\cdot\|_\M$) and boundary mass. In comparison, this regularizer acts on (static) $1$-currents and the penalty on the boundary may be seen as a quantization of the number of jumps.
    
    In general, the main goal in the modeling of dynamic inverse problems is to incorporate knowledge about the behaviour of a dynamic source which is to be reconstructed from possibly noisy measurements. A crucial challenge is to \textit{correlate} the observations given at different time instances. The choice of an appropriate regularizer is often a non-trivial task considering, inter alia, the available data, measurement procedure, or physical laws. For example, in \cite{schmitt2002efficient, schmitt2002efficient2} the authors require `temporal smoothness' to model observed displacements in X‑ray computed tomography or current density reconstruction. 
    
    In recent years, temporal regularization using optimal transport theory has gained a lot of interest. The underlying assumption is that the dynamics under consideration follow certain physical principles. Examples of related inverse problems can be found in positron emission tomography (PET) \cite{SSW20, dawood2010continuity}, magnetic resonance imaging (MRI) \cite{BF20}, single-particle tracking \cite{KCFR22, duval2024dynamical}, and particle image velocimetry (PIV) \cite{saumier2015optimal}.
        These works use the so-called Benamou\textendash Brenier formulation of the Wasserstein-$2$ distance $W_2(\rho^+,\rho^-)$ between $\rho^+,\rho^-\in\probs(\C)$ \cite{BB00},
        \begin{equation*}
            W_2(\rho^+,\rho^-)^2 = \inf_{\rho,v} \:T\int_{[0,T]} \int_{\Omega} |v_t|^2\e\rho_t\e t,
        \end{equation*}
        where the infimum is over sufficiently regular time-dependent mass distributions $\rho=\rho_t$ and velocity fields $v=v_t$ on $\Omega$ which satisfy the continuity equation $\partial_t\rho+\textup{div}(\rho v)=0$ and initial respectively final condition $\rho_0=\rho^+$ and $\rho_T=\rho^-$ (otherwise, the squared Wasserstein-$2$ distance can as well be written as an infimum over transport plans $\pi$ as above, with $|x-y|$ replaced by $|x-y|^2$). Note that $\frac{1}{2T}W_2(\rho^+,\rho^-)^2$ can be interpreted as an infimization of the time integral over the (total) kinetic energy at time $t$. Inspired by this result, a dynamic regularizer has been devised, for example in the form (with $T=1$)
        \begin{equation*}
            R(p,\mu) =  \|\mu\|_{\mathcal{M}}+\int_{(0,1)\times\Omega} \left|\frac{\e p}{\e\mu}(t,x)\right|^2\e\mu(t,x),
        \end{equation*}
        where $\mu$ is a nonnegative and $p$ (representing the physical momentum) a vector-valued Radon measure on $(0,1)\times\Omega$ with $p\ll\mu$, subject to the continuity equation (which, by definition of momentum, becomes $\partial_t\mu+\textup{div}(p)=0$), cf.\:\cite{BF20, KCFR22}. Note that one may formally replace the second term in $R$ by the time integral of the metric derivative of an absolutely continuous curve \cite[Thm.\:5.27]{San}
        \begin{equation*}
        t \mapsto \mu_t \in (\omega(\mu)\probs(\R^n), W_2).      
        \end{equation*}
        From the point of view of sparse optimization, it has also been shown that such dynamic regularization enforces sparsity: under reasonable assumptions (in particular, finite dimensional data space), there always exists a reconstruction of the form (after disintegration)
        \begin{equation*}
            t\mapsto\mu^{opt}_t = \sum_{i=1}^N \lambda^i \delta_{\gamma^i_t}
        \end{equation*}
        with $\lambda^i \in \R_+^*$ and absolutely continuous curves $\gamma^i:[0,1]\to\Omega$ \cite{bredies2021extremal}. This characterization, which can also be extended to the unbalanced case \cite{bredies2022superposition}, is a consequence of \cite{S}, see \cite[Thm.\:8.2.1]{AGS}. 
        Moreover, the sparse structure has been successfully used to design sparse optimization algorithms that directly optimize the curve $\mu$ by inserting or deleting curves of type $\gamma$ and optimizing the weights $\lambda$ \cite{KCFR22,duval2024dynamical}.
        A feature of the above framework is that the reconstructed curve $t\mapsto\mu_t$ respectively the underlying paths $t\mapsto\gamma^i_t$ are \textit{absolutely continuous}. As a consequence of this regularization bias, these models are only suitable for the tracking and reconstruction of dynamic sources that do \textit{not} show spatial discontinuities across time. Although these regularizations are useful in many applications (as pointed out above), this prior limits the reconstruction of a ground truth that exhibits a dynamic with jumps, which appear, for example, in (stochastic) processes (in particular, \cad\:processes) or object tracking in computer vision, see \:\cite{BSU20,LLT18,KGGQ19,BBM09} for some related problems. Jumps may also be produced by defects or not adequately calibrated measurement devices. Conversely, their detection can support the maintenance or recalibration. In this paper, we address this issue by building a mathematical framework that allows for the tracking of temporally discontinuous sources using the regularizer $\mu\mapsto T\omega(\mu)+\essvar(\mu)$ applied to a \textit{BV curve} $t\mapsto\mu_t\in(\omega(\mu)\probs(\C),W_1)$. Note that there also exists a dynamic formulation of the Wasserstein-$1$ distance \cite{A00}.
        
        Beside our choice of a regularizer, we mention that there are generalizations of the Wasserstein-$1$ distance whose use in the regularization of inverse problems might also be interesting to further investigate. In \cite{SW} the authors study a Wasserstein-$1$-type model in which the mass can vary over time, in \cite{MMT17,MMSR16} the authors introduce a multi-material transport problem (it also admits a temporally dynamic formulation \cite{JL23}) which, in the single-material case, can be interpreted as Wasserstein-$1$ transport, and in \cite{BB,BPSS,LSW,LSW22} the authors investigate the Wasserstein-$1$ distance with respect to the so-called (generalized) urban metric (for which the Euclidean distance is a special case).
        
    The paper is organized as follows. In \cref{InvTrackProb} we set up our dynamic inverse problem with the regularizer from above. Our main results are summarized in \cref{MainResults}. \Cref{Preliminaries} is devoted to some preliminaries for the proofs and numerical part. In \cref{Proofs} we show our main result, the existence of a sparse minimizer (\cref{extremalpoints,representersection}). We close it with a discussion on a generalization (\cref{Discussion}). In \cref{OptAlg} we describe the algorithm (\cref{DescrAlg}), which we use in our numerical experiments, and specify its discretization and implementation details (\cref{DerivationForward,ImplDet}). Finally, in \cref{NumRes} we display numerical results.
	\subsection{Inverse tracking problem for BV curves}
    \label{InvTrackProb}
    Let $\Omega\subset\R^n$ be non-empty, compact, and convex.
    \begin{defin}[Admissible BV curves, weight functional]
    \label{admBV}
    We define the set of \textbf{admissible BV curves} (not necessarily normalized) as
    \begin{equation*}
        \adm=\R_+BV([0,1];\wass),
    \end{equation*}
    where $\wass=(\probs(\C),W_1)$ denotes the Wasserstein-$1$ space of probability measures equipped with the Wasserstein-$1$ distance. If $\mu\in\adm$, then we write $\omega(\mu)$ for the \textbf{weight} of $\mu$, i.e.\:we have $\mu=\omega(\mu)\rho$ for some $\rho\in BV([0,1];\wass) $. 
    \end{defin}
    An example of a shortest \cad\:representative of $\rho\in BV([0,1];\wass)$ matching some given data at different time instances is given in \cref{FigExCad}.
    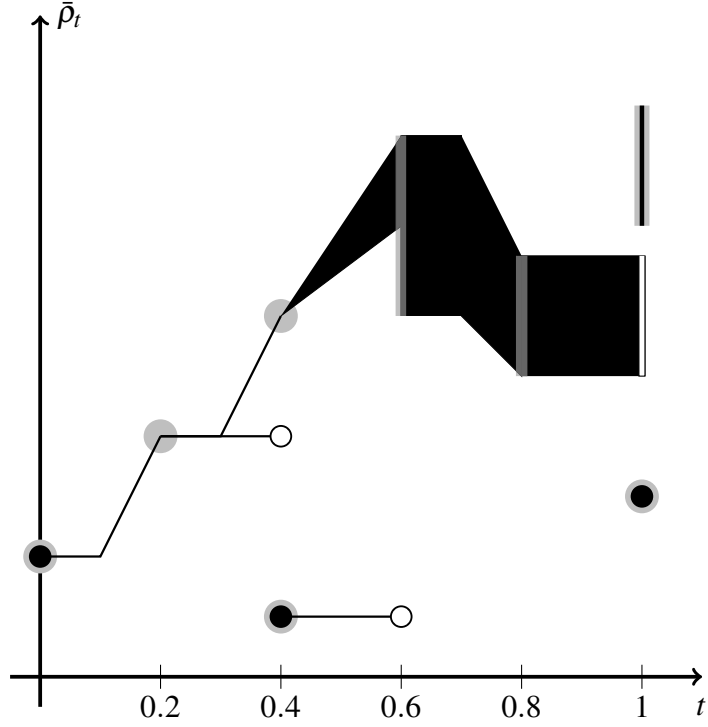
\begin{figure}[t]
    \centering
        \begin{tikzpicture}[scale=8]
        \draw[->,line width=0.5mm] (-0.05,0) -- (1.1,0);
        \draw[->,line width=0.5mm] (0,-0.05) -- (0,1.1);
        \draw (1,0.02) -- (1,-0.02);
        \node at (1,-0.05) {$1$};
        \node at (1.1,-0.05) {$t$};
        \node at (0.05,1.1) {$\bar\rho_t$};
        \node[lightgray,fill,circle,minimum size=4.5mm,inner sep=0pt] at (0,0.2) {};
        
        \draw[lightgray,line width=1.5mm] (0.8,0.5) -- (0.8,0.7);
        
        \draw (0.2,0.02) -- (0.2,-0.02);
        \node[lightgray,fill,circle,minimum size=4.5mm,inner sep=0pt] at (0.2,0.4) {};
        
        \draw (0.4,0.02) -- (0.4,-0.02);
        \node[lightgray,fill,circle,minimum size=4.5mm,inner sep=0pt] at (0.4,0.6) {};
        \node[lightgray,fill,circle,minimum size=4.5mm,inner sep=0pt] at (0.4,0.1) {};
        \draw[lightgray,line width=1.5mm] (0.6,0.6) -- (0.6,0.9);
        \filldraw[draw=black, fill=black] (0.4,0.6) -- (0.6,0.75) -- (0.6,0.9);

        \draw (0.6,0.02) -- (0.6,-0.02);
        
        \filldraw[draw=black, fill=black] (0.6,0.6) -- (0.6,0.9) -- (0.7,0.9) -- (0.7,0.6);
        \filldraw[draw=black, fill=black] (0.7,0.9) -- (0.7,0.6) -- (0.8,0.5) -- (0.8,0.7);

        \draw (0.8,0.02) -- (0.8,-0.02);
        
        \filldraw[draw=black, fill=black] (0.8,0.5) -- (0.8,0.7) -- (1.005,0.7) -- (1.005,0.5);
        \filldraw[draw=black, fill=white] (1.005,0.7) -- (1.005,0.5) -- (0.995,0.5) -- (0.995,0.7);
        
        \node[lightgray,fill,circle,minimum size=4.5mm,inner sep=0pt] at (1,0.3) {};
        
        \draw[line width=0.3mm] (0,0.2) -- (0.1,0.2) -- (0.2,0.4);

        \draw[line width=0.3mm] (0.2,0.4) -- (0.3,0.4) -- (0.4,0.6);
        \draw[line width=0.3mm] (0.2,0.4) -- (0.4,0.4);
        \draw[line width=0.3mm] (0.4,0.1) -- (0.6,0.1);

        \draw[lightgray,line width=2mm] (1,0.75) -- (1,0.95);

        \node[fill,circle,minimum size=3mm,inner sep=0pt] at (0.4,0.4) {};
         \node[white,fill,circle,minimum size=2.5mm,inner sep=0pt] at (0.4,0.4) {};

        \node[fill,circle,minimum size=3mm,inner sep=0pt] at (0.6,0.1) {};
        \node[white,fill,circle,minimum size=2.5mm,inner sep=0pt] at (0.6,0.1) {};

        \node at (0.2,-0.05) {$0.2$};
        \node at (0.4,-0.05) {$0.4$};
        \node at (0.6,-0.05) {$0.6$};
        \node at (0.8,-0.05) {$0.8$};

        \draw[gray!120,line width=0.75mm] (0.6035,0.6) -- (0.6035,0.9);
        \filldraw[draw=gray!120, fill=gray!120] (0.5915,0.887) -- (0.5915,0.7435) -- (0.6,0.75) -- (0.6,0.9);
        
        \draw[gray!120,line width=0.75mm] (0.805,0.5) -- (0.805,0.7);
        \filldraw[draw=gray!120, fill=gray!120] (0.7915,0.7) -- (0.7915,0.508) -- (0.8,0.5) -- (0.8,0.7);

        \node[fill,circle,minimum size=3mm,inner sep=0pt] at (0.4,0.1) {};
        \draw[line width=0.6mm] (1,0.75) -- (1,0.95);
        \node[fill,circle,minimum size=3mm,inner sep=0pt] at (0,0.2) {};
         \node[fill,circle,minimum size=3mm,inner sep=0pt] at (1,0.3) {};
        \end{tikzpicture}  
    \caption{Example of a shortest \cad\:curve $\bar\rho:[0,1]\to\wass$ matching given data (displayed in gray) at time points $t=0,0.2,\ldots,1$, where $\Omega\subset\R$. In the one-dimensional case, Wasserstein-$1$ transport is well-understood (by the linearity of the transportation cost combined with the fact that each particle can only move in two directions), see \cite[Ch.\:2]{San}. For example, we have $\bar\rho_t=\delta_{x_t}$ for $t\in[0,0.3]$, $\bar\rho_t=\frac{1}{2}\delta_{x_t}+\frac{1}{2}\delta_{y}$ for $t\in[0.3,0.4)$, and $\bar\rho_t=(2\Leb(\ell_t))^{-1}\Leb\mres\ell_t+\frac{1}{2}\delta_{z}$ for $t\in(0.4,0.6)$. Here $\Leb\mres\ell_t$ denotes the restriction of one-dimensional Lebesgue measure $\Leb=\Leb^1$ to line segment $\ell_t$ (which would be replaced by the one-dimensional Hausdorff measure if $n>1$). Note that the jumps at $t=0.4$ and $t=0.6$ are \cad\:(in particular, the right-hand limits correspond to the given data). At $t=1$ the curve $\bar\rho$ jumps to data consisting of a line segment and a Dirac mass. In this example, each particle (with infinitesimal small mass) moves linearly with respect to $t$ (which does not need to be satisfied), waits, or jumps.}
    \label{FigExCad}
    \end{figure}
    \begin{rem}[Structure of $\adm$]
    \label{structAdm}
    The set of admissible BV curves $\adm$ is a convex cone in the vector space $L_w^2([0,1];\M(\C))$ (see \cref{BasNot}).
    \end{rem}
    \begin{defin}[Regularizer for BV curves]
    \label{admReg}
    We define ($\alpha,\beta\in\R_+^*$ fixed)
    \begin{equation*}
    \reg_{\alpha,\beta}(\mu)=\alpha \omega(\mu)+ \beta \essvar(\mu), 
    \end{equation*}
    where $\essvar(\mu)$ denotes the essential variation of $\mu\in\adm$ (see \cref{BVetc}).
    \end{defin}
	\begin{prob}[BV curve tracking problem]
		\label{BVtp}
	    A \textbf{BV curve tracking problem} is any inverse problem of the form
		\begin{equation*}
		\inf_{\mu\in\adm} \fid(\for(\mu) )+\reg_{\alpha,\beta}(\mu),
		\end{equation*} 
        where \textbf{forward operator} $K$ maps each admissible BV curve to an element of some \textbf{data space} $Y$ and $\mathcal{F}$ is a \textbf{fidelity}. In particular, we make the following assumptions (which ensure existence, cf.\:\cref{existence}):
        \begin{itemize}
            \item $Y$ is a Hilbert space,
            \item $\mathcal{F} : Y \rightarrow \R\cup\{ \infty \}$ is bounded from below, proper, and weakly lower semi-continuous,
            \item $\for : \mathcal{A} \rightarrow Y$ is continuous in the following sense: if $(\mu^j)\subset\adm$ and $\mu\in\adm$ with $\mu^j_t\ws\mu_t$ for a.e.\:$t\in[0,1]$ (in short, we have $\mu^j\ws\mu$ a.e.), then $\for(\mu^j)\rightharpoonup\for(\mu)$ (weakly) in $Y$.
        \end{itemize}
	\end{prob}
    The existence of a minimizer follows readily from the direct method in the calculus of variations and \cite[Thm.\:2.4 (i)]{A90} (see beginning of \cref{Proofs}).
    \begin{prop}[Existence]
		\label{existence}
		\Cref{BVtp} admits a solution.
	\end{prop}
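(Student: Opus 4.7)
The plan is to apply the direct method of the calculus of variations. I take a minimizing sequence $(\mu^j) \subset \adm$. Since $\fid$ is bounded from below, we may assume $\reg_{\alpha,\beta}(\mu^j) \leq C$ uniformly in $j$, which yields $\omega(\mu^j) \leq C/\alpha$ and $\essvar(\mu^j) \leq C/\beta$. Because $\C$ is compact, the scaled probability measures $[0, C/\alpha] \cdot \probs(\C)$ form a $W_1$-compact subset of $\M(\C)$, so every value $\mu^j_t$ lies in a fixed compact subset of $\wass$.

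With the uniform bound on essential variation and the compact range, Ambrosio's Helly-type selection theorem \cite[Thm.\,2.4(i)]{A90}, applied to \cad\ representatives, yields a (non-relabeled) subsequence and a limit curve $\mu : [0,1] \to \M(\C)$ such that $\mu^j_t \to \mu_t$ in $W_1$ for almost every $t \in [0,1]$. On compact $\C$, $W_1$-convergence coincides with weak-$*$ convergence, so $\mu^j \ws \mu$ almost everywhere; in particular the total mass passes to the limit, giving $\omega(\mu^j) \to \mu_t(\C)$ for a.e.\ $t$. Hence $\mu$ has a.e.\ constant mass $\omega(\mu) := \lim_j \omega(\mu^j) \leq C/\alpha$, and the lower semicontinuity of $\essvar$ under pointwise $W_1$-convergence (which follows from writing $\essvar$ as a supremum of finite sums of $W_1$-distances over partitions) gives $\essvar(\mu) \leq \liminf_j \essvar(\mu^j) \leq C/\beta$, so $\mu \in \adm$.

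To conclude, the continuity hypothesis on $\for$ yields $\for(\mu^j) \rightharpoonup \for(\mu)$ in $Y$, and the weak lower semicontinuity of $\fid$ provides $\liminf_j \fid(\for(\mu^j)) \geq \fid(\for(\mu))$. Combining with the lower semicontinuity of $\reg_{\alpha,\beta}$ established above shows that $\mu$ attains the infimum. The only technical obstacle is verifying that the cited result from \cite{A90} applies verbatim in the $\wass$-valued setting and that $\essvar$ is lower semicontinuous with respect to a.e.\ pointwise $W_1$-convergence; both are classical features of BV maps into a metric space and so no serious difficulty is expected.
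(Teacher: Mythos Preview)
Your argument follows the same route as the paper's: direct method, uniform bound on $\reg_{\alpha,\beta}$ along a minimizing sequence, Ambrosio's Helly-type compactness \cite[Thm.~2.4(i)]{A90}, and then the assumed continuity of $\for$ together with weak lower semicontinuity of $\fid$. There is one technical imprecision worth flagging: you apply \cite[Thm.~2.4(i)]{A90} to the weighted curves $\mu^j$ directly, asserting that their values lie in a ``$W_1$-compact subset of $\wass$'', but $W_1$ is defined only between \emph{probability} measures, so $\mu^j_t\notin\wass$ unless $\omega(\mu^j)=1$, and the curves $\mu^j$ do not all take values in a common metric space to which the cited theorem applies. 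The paper resolves this by first passing to a subsequence with $\omega(\mu^j)\to\omega$, and then, in the case $\omega>0$, applying \cite[Thm.~2.4(i)]{A90} to the \emph{normalized} curves $\rho^j=\mu^j/\omega(\mu^j)\in BV(I;\wass)$ (whose essential variations are uniformly bounded by $\omega(\mu^j)^{-1}\essvar(\mu^j)$); the case $\omega=0$ gives $\mu^j\ws 0$ trivially. With this normalization inserted, your proof is complete and matches the paper's.
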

    \begin{rem}[Stability]
    \label{stability}
   A natural choice (which for $Y=\R^m$ corresponds to least square fitting) for $\fid$ is $\fid_f(y)=\frac{1}{2}\|y-f\|_Y^2$, where $f\in Y$ is some given reference data. In this setting, which we will use in the numerical part (\cref{OptAlg}), the following stability property is satisfied: Let $(f^j)\subset Y$ with $f^j\to \hat f$ (strongly) in $Y$. Further, assume that each $\mu^j$ is a solution of \cref{BVtp} with $\fid=\fid_{f^j}$. Then, up to a subsequence, we have $\mu^j\ws\hat\mu$ a.e.\:for some $\hat\mu\in\adm$ and $\hat\mu$ is a solution of \cref{BVtp} with $\fid=\fid_{\hat f}$. 
    \end{rem}
    \begin{examp}[$K$ with values in infinite-dimensional Hilbert space]
    \label{KInfVal}
    Let $\Phi\in C([0,1]\times\Omega^2)$. For any $\mu\in L^2_w([0,1],\M(\C))$ define $\widehat K\mu:\Omega\to\R$ by
    \begin{equation*}
        (\widehat K\mu)(x)=\int_{[0,1]}\int_\Omega\Phi_t(x,y)\e\mu_t(y)\e\Leb(t).
    \end{equation*}
    Let $(x_i)\subset\Omega$ with $x_i\to x\in\Omega$. For each $i$ we have $\int_\Omega\Phi_t(x_i,y)\e\mu_t(y)\leq\max\Phi\|\mu_t\|_\M$ for a.e.\:$t\in[0,1]$. Further, it holds $t\mapsto\|\mu_t\|_\M\in L^1([0,1])$ by definition of $L^2_w([0,1],\M(\C))$. Thus, twofold application of Lebesgue's dominated convergence theorem implies $(\widehat K\mu)(x_i)\to(\widehat K\mu)(x)$, hence $\widehat K\mu\in C(\Omega)$. In particular, we can interpret $\widehat K$ as a linear map $L^2_w([0,1],\M(\C))\to L^2(\Omega)$. Now define $K=\widehat K|_\adm$. Take a sequence $(\mu^j)\subset\adm$ such that $\mu^j\ws\mu$ a.e. Write $\mu^j=\omega(\mu^j)\rho^j$ and $\mu=\omega(\mu)\rho$ as in \cref{admBV}. Choosing the test function $\phi\equiv 1$ we obtain $\omega(\mu^j)\to\omega(\mu)$, in particular $\omega(\mu^j)$ is bounded. Using this, $\mu^j\ws\mu$ a.e., and dominated convergence again, we get 
    \begin{equation*}
    \int_\Omega|K\mu^j-K\mu|^2\e\Leb^n=\int_\Omega\left|\int_{[0,1]}\int_\Omega\Phi\e(\mu^j-\mu)\e\Leb\right|^2\e\Leb^n\to 0,
    \end{equation*}
    that is $K\mu^j\to K\mu$ (strongly) in $L^2(\C)$ so that $K$ satisfies the assumption in \cref{BVtp}. Thanks to the regularity of $\Phi$, we can simply apply the Arzel\`{a}\textendash Ascoli theorem to show that $\widehat K$ is a compact operator implying that the inverse problem $\widehat K\mu=f$ with data $f\in C(\Omega)$ is ill-posed in the sense of Hadamard \cite[Thm.\:3.3.2]{MS12}. Therefore, \cref{BVtp} can be seen as a sparse regularization of it. One may interpret $\widehat K\mu=f$ as a variant of the Fredholm integral equation of first kind (a classical ill-posed linear inverse problem) --- the regularity of $\Phi$ is needed to ensure that $\widehat K$ is well-defined (typically, one requires lower $L^2$-regularity of the kernel). Indeed, the (constant) curve $\mu\equiv g\Leb^n$ with $g\in L^2(\Omega)$ is an element of $L^2_w([0,1],\M(\C))$. Taking $\Phi$ independent of $t$ we have $(\widehat K\mu)(x)=\int_\Omega\Phi(x,y)g(y)\Leb^n(y)$.
    \end{examp}
    Note that forward operator $K$ cannot implement pointwise evaluation of $\mu\in L^2_w([0,1],\M(\C))$. This aspect is also addressed in the discretization in \cref{KFinVal}.
	\subsection{Main results}
    \label{MainResults}
    First, we use the superposition principle in \cite{ALS24} to characterize the extremal points of sublevel sets of the regularizer $\reg_{\alpha,\beta}$. Define 
    \begin{equation*}
    L_c^-(\reg_{\alpha,\beta})=\left\{\mu\in\adm\:\middle|\:\reg_{\alpha,\beta}(\mu)\leq c\right\} 
    \end{equation*}
    for any $c\in\R_+$.
	\begin{prop}[Extremal points of $L_c^-(\reg_{\alpha,\beta})$]
	\label{extremalpointschar}
	For every $c\in\R_+$ we have
	\begin{equation*}
	\ext (L_c^-(\reg_{\alpha,\beta}))=\{ \mu\in\adm\:|\:\reg_{\alpha,\beta}(\mu)\in\{ 0,c \},\:\bar{\mu}_t=\omega(\mu)\delta_{x_t}\textup{ for all }t\in[0,1) \},
	\end{equation*}
	where $\bar{\mu}$ is any \cad\:representative of $\mu$.
	\end{prop}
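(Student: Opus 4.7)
The plan is to prove the two inclusions separately, preceded by the reduction that every extremal $\mu$ satisfies $\reg_{\alpha,\beta}(\mu)\in\{0,c\}$. Since $\reg_{\alpha,\beta}$ is convex and positively $1$-homogeneous, and $\adm$ is a convex cone (\cref{structAdm}), any extremal $\mu$ with $0<\reg_{\alpha,\beta}(\mu)<c$ would admit the non-trivial decomposition $\mu=\tfrac{1}{2}(1+\epsilon)\mu+\tfrac{1}{2}(1-\epsilon)\mu$ for $\epsilon>0$ sufficiently small, both summands lying in $L_c^-(\reg_{\alpha,\beta})$. The trivial curve $\mu=0$ is extremal in the cone $\adm$ by nonnegativity.

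\textbf{Inclusion ``$\supseteq$''.} Let $\bar{\mu}_t=\omega(\mu)\delta_{x_t}$ for $t\in[0,1)$ with $\reg_{\alpha,\beta}(\mu)=c>0$, and suppose $\mu=\lambda\mu^1+(1-\lambda)\mu^2$ with $\mu^i\in L_c^-(\reg_{\alpha,\beta})$, $\lambda\in(0,1)$. Convexity gives the chain $c=\reg_{\alpha,\beta}(\mu)\leq\lambda\reg_{\alpha,\beta}(\mu^1)+(1-\lambda)\reg_{\alpha,\beta}(\mu^2)\leq c$, forcing $\reg_{\alpha,\beta}(\mu^i)=c$. For a.e.\:$t$, the inequalities $\lambda\mu^1_t,(1-\lambda)\mu^2_t\leq\omega(\mu)\delta_{x_t}$ in $\M_+(\C)$ force $\textup{supp}\,\mu^i_t\subseteq\{x_t\}$, hence $\mu^i_t=m^i_t\delta_{x_t}$. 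Because $\mu^i\in\adm$ has constant total mass, $m^i_t\equiv\omega(\mu^i)$. Setting $V:=\essvar((\delta_{x_t})_t)$ (well-defined since $W_1(\delta_x,\delta_y)=|x-y|$), one gets $\essvar(\mu^i)=\omega(\mu^i)V$, and the equation $\omega(\mu^i)(\alpha+\beta V)=c$ uniquely determines $\omega(\mu^i)=\omega(\mu)$, so $\mu^1=\mu^2=\mu$ and $\mu$ is extremal.

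\textbf{Inclusion ``$\subseteq$''.} Conversely, let $\mu$ be extremal with $\reg_{\alpha,\beta}(\mu)=c>0$. By the superposition principle of [ALS24] one obtains a positive Radon measure $\eta$ on a space $\T$ of \cad\:paths $\gamma:[0,1]\to\C$ with $\mu_t=\int_{\T}\delta_{\gamma(t)}\,\e\eta(\gamma)$ for a.e.\:$t$, together with the additivity $\omega(\mu)=\eta(\T)$ and $\essvar(\mu)=\int_{\T}\essvar(\gamma)\,\e\eta(\gamma)$. If $\eta$ is not a Dirac mass, one picks a Borel set $E\subset\T$ separating paths in $\textup{supp}\,\eta$ and sets $\lambda:=c^{-1}\int_E(\alpha+\beta\essvar(\gamma))\,\e\eta(\gamma)\in(0,1)$, $\mu^1:=\lambda^{-1}\int\delta_{\gamma(\cdot)}\,\e(\eta|_E)$ and $\mu^2:=(1-\lambda)^{-1}\int\delta_{\gamma(\cdot)}\,\e(\eta|_{\T\setminus E})$. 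Additivity gives $\reg_{\alpha,\beta}(\mu^i)=c$, so $\mu^i\in L_c^-(\reg_{\alpha,\beta})$, while $\mu=\lambda\mu^1+(1-\lambda)\mu^2$ with $\mu^1\neq\mu^2$, contradicting extremality. Hence $\eta=\omega(\mu)\delta_{\gamma_0}$ for some $\gamma_0\in\T$, and $\bar{\mu}_t=\omega(\mu)\delta_{\gamma_0(t)}$ for $t\in[0,1)$.

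\textbf{Main obstacle.} The hard part lies in the reverse inclusion: invoking the superposition principle of [ALS24] in the \cad\:BV setting and verifying that the resulting disintegration carries both $\omega$ and $\essvar$ additively. The subsequent splitting also demands care — one must choose $E$ so that the two summands are genuinely distinct admissible curves (ruling out reparametrization-induced coincidences) — and this is what allows extremality to collapse $\eta$ onto a single trajectory $\gamma_0$.
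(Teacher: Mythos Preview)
Your overall strategy matches the paper's: reduce to $\reg_{\alpha,\beta}(\mu)\in\{0,c\}$, handle the ``$\supseteq$'' direction by forcing Dirac support, and for ``$\subseteq$'' invoke the superposition principle of \cite{ALS24} and split $\eta$ to contradict extremality. However, there is a genuine gap in your ``$\subseteq$'' argument.

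You claim that ``additivity gives $\reg_{\alpha,\beta}(\mu^i)=c$''. The superposition principle yields $\essvar(\mu)=\int\var(\gamma)\,\e\eta(\gamma)$ for the \emph{full} measure $\eta$, but after restricting $\eta$ to a Borel set $E$ (or its complement) you only obtain the inequality
\[
\essvar\bigl(t\mapsto(e_t)_\#(\eta\mres E)\bigr)\leq\int_E\var(\gamma)\,\e\eta(\gamma).
\]
This is precisely the content of \cref{varineq}, which in turn rests on \cite[Thm.\ 3.1]{ALS24} together with a separate estimate of the jump at $t=1$ via an explicit transport plan. Without this inequality you cannot place $\mu^1,\mu^2$ in $L_c^-(\reg_{\alpha,\beta})$, and the argument stalls. (Note that only $\leq c$ is required; equality follows a posteriori from convexity once $\mu^i\in L_c^-(\reg_{\alpha,\beta})$ is known, but it is not available as a direct input.)

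A second, smaller issue: concluding that $\eta$ itself is a Dirac is stronger than needed and not quite what your splitting delivers, since paths in the support of $\eta$ could differ only at $t=1$ while still producing $\mu^1=\mu^2$ as equivalence classes in $\adm$. The paper sidesteps this by taking as contradiction hypothesis ``$\bar\mu_{t_0}$ is not a Dirac for some $t_0\in[0,1)$'', choosing $E=e_{t_0}^{-1}(B_0)$ for a suitable $B_0\in\B(\C)$, and then using right-continuity of the \cad\ representatives of $\mu^1,\mu^2$ to exhibit a nontrivial interval $(t_0,t_1)$ on which they differ. You flag this difficulty in your final paragraph but do not resolve it.
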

    We will then use \cref{extremalpointschar} and the representer theorem in \cite{BCCDGW19} to prove the existence of a sparse minimizer and its characterization. To this end, we require the following.
	\begin{assump}[Convex setup and finiteness of data]
	\label{linearf}
	The fidelity $\fid$ is convex. Further, the forward operator $\for$ is a linear map from $\adm$ to a finite-dimensional data space $Y=\R^m$ which can be linearly extended to $L_w^2([0,1];\M(\C))$.
	\end{assump}
	\begin{thm}[Existence of sparse solution]
		\label{representer}
		Let \cref{linearf} be satisfied. Then there exists a solution $\mu^{opt} \in \mathcal{A}$ of \cref{BVtp} with
		\begin{align*}
		\mu_t^{opt} = \sum_{i=1}^N \lambda^i \delta_{x^i_t}
		\end{align*}
		for a.e.\:$t \in [0,1]$, where
		$N\leq m=\textup{dim}(Y)$, $\lambda^i \in\R_+$, and $t \mapsto x^i_t \in BV([0,1]; \C)$ for $i=1,\ldots, N$.
	\end{thm}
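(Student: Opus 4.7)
The plan is to combine the extremal-point characterization of \cref{extremalpointschar} with the representer theorem in \cite{BCCDGW19}. Observe first that $\reg_{\alpha,\beta}$ is positively 1-homogeneous: for $\lambda\geq 0$ one has $\omega(\lambda\mu)=\lambda\omega(\mu)$ and $\essvar(\lambda\mu)=\lambda\essvar(\mu)$. Under \cref{linearf} the fidelity is convex, the forward operator $\for$ is linear with finite-dimensional target $Y=\R^m$, and admits a linear extension to $L_w^2([0,1];\M(\C))$. This matches the abstract setup required by the cited representer theorem.

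First, I would fix any minimizer $\mu^*$ given by \cref{existence} and set $c:=\reg_{\alpha,\beta}(\mu^*)$. Restricting the minimization to the sublevel set $L_c^-(\reg_{\alpha,\beta})$ yields the same optimal value. This set is convex by \cref{structAdm} together with convexity of $\omega$ and $\essvar$, and the compactness used in the proof of \cref{existence} shows that it is sequentially compact for a.e.\:weak* convergence; the objective is lower semicontinuous in the same topology thanks to the hypotheses on $\fid$ and $\for$ in \cref{BVtp}.

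Next, I would apply the representer theorem from \cite{BCCDGW19}. It produces a minimizer of the form $\mu^{opt}=\sum_{j=1}^N c^j e^j$ with $c^j>0$, $e^j\in\ext(L_c^-(\reg_{\alpha,\beta}))$, and $N\leq m=\textup{dim}(Y)$. By \cref{extremalpointschar}, every such $e^j$ is either zero (drop it) or admits a \cad\:representative $\bar e^j_t=\omega(e^j)\delta_{x^j_t}$ on $[0,1)$. Since $W_1(\omega\delta_x,\omega\delta_y)=\omega|x-y|$, a BV curve in $\wass$ of this Dirac type yields $x^j\in BV([0,1];\C)$. Setting $\lambda^j:=c^j\omega(e^j)\geq 0$ we obtain $\mu^{opt}_t=\sum_{j=1}^N\lambda^j\delta_{x^j_t}$ for a.e.\:$t\in[0,1]$, which is the claimed sparse representation.

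The principal technical obstacle is to verify the precise hypotheses of the representer theorem in the present infinite-dimensional setting: one must identify the correct locally convex ambient space (a natural candidate being $L_w^2([0,1];\M(\C))$ with the topology induced by a.e.\:weak* convergence), check lower semicontinuity and sublevel-set compactness of $\reg_{\alpha,\beta}$ therein, confirm the appropriate coercivity, and ensure that the extremal points of the sublevel set computed in this ambient space are exactly those identified in \cref{extremalpointschar} on $\adm$ (so that no spurious extremal points outside $\adm$ can enter the convex combination). Once these matching ingredients are in place, the translation via \cref{extremalpointschar} into the sparse formula is a direct substitution.
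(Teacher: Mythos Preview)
Your overall strategy---apply the representer theorem of \cite{BCCDGW19} and then translate via \cref{extremalpointschar}---is exactly the paper's approach. However, the ``principal technical obstacle'' you flag is where the actual content of the proof lies, and two of the choices you sketch would not work as stated.

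First, the ambient locally convex space is $L_w^2([0,1];\M(\C))$ equipped with its weak-$*$ topology $\T$ (as dual of $L_w^2([0,1];C(\C))$), \emph{not} a topology ``induced by a.e.\ weak-$*$ convergence''. The latter is not a vector-space topology, so Krein--Milman and the machinery of \cite{BCCDGW19} cannot be invoked in it. The paper shows (\cref{furtherProps}) that $\reg_{\alpha,\beta}$ is $\T$-lower semicontinuous with $\T$-compact sublevel sets, and that $\T$-convergence implies, up to a subsequence, a.e.\ weak-$*$ convergence; this is how the two notions are reconciled.

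Second, the representer theorem in \cite{BCCDGW19} is applied to a \emph{specific} minimizer, namely one chosen in $\ext(\sol)$. The paper first proves $\ext(\sol)\neq\emptyset$ via Krein--Milman in $(\V,\T)$ (\cref{extNon}); only then does \cite[Cor.\,2]{BCCDGW19} apply (the smallest face of $\sol$ at $\mu^{opt}$ has dimension zero). Your proposal starts from an arbitrary minimizer $\mu^*$, which is not enough for that version of the result.

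Finally, the hypotheses actually needed from \cite{BCCDGW19} are that $L_c^-(\reg_{\alpha,\beta})$ is \emph{linearly closed}, contains \emph{no rays}, and has \emph{trivial lineality space}; these are verified in \cref{propertiesS}. No compactness or coercivity of the sublevel set enters the application of \cite{BCCDGW19} itself (compactness is used only to obtain $\ext(\sol)\neq\emptyset$). Absence of rays also disposes of the extremal-ray alternative in the conclusion of \cite[Thm.\,1]{BCCDGW19}, leaving only convex combinations of at most $m$ points of $\ext(L_c^-(\reg_{\alpha,\beta}))$, after which your final substitution via \cref{extremalpointschar} is correct.
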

    As a variant, one may fix $N$ and minimize over weights $\lambda^1,\ldots,\lambda^N \in\R_+^*$ and curves $x^1,\ldots,x^N \in BV([0,1]; \C)$. This optimization may be seen as regression by BV curves with small length and importance $\lambda^i$. The data may be represented by a (static) point cloud in $\Omega$ or recorded over time, for example, it may be given by finitely many maps $X^1,\ldots,X^{N_S}:[0,1]\to \Omega$, where each $X^i$ represents a sample path of a c\`{a}dl\`{a}g process $\{ X_t\:|\:t\in [0,1] \}$, that is almost surely every path $X :[0,1]\to\Omega$ is c\`{a}dl\`{a}g \cite[p.\:4]{P05}. Note that the sparse solution in \cref{representer} does not contain any diffusive behaviour which, in principle, is admissible in Wasserstein-$1$ transport, for example when a line segment is irrigated by a point source (see \cref{FigExCad}). 
    \begin{examp}[$K$ with values in finite-dimensional Hilbert space]
    \label{KFinVal}
    Recall the definition of forward operator $\widehat K:L^2_w([0,1];\M(\C))\to L^2(\Omega)$ from \cref{KInfVal}. Let $x_1,\ldots,x_L\in\Omega$. Instead of considering $\Phi\in C([0,1]\times\Omega^2)$ one may locate $\Phi^i\in C([0,1]\times\Omega)$ around $x_i$ for all $i=1,\ldots,L$. For example, each $\Phi^i$ may represent the sensitivity (which can be different for different time points $t$) of a sensor placed at $x_i$ convolved with some point spread function, that is $\Phi^i_t=\theta_t^i\ast\varphi_t^i$, where $\theta_t^i$ is the sensitivity (not necessarily continuous) and $\varphi_t^i$ models a physical spreading process (for example, spreading of light with $\varphi$ independent of $t$ and $i$ or spreading through diffusive material properties with $\varphi^i$ depending on $i$), see \cite[Section 3.1]{V23}. Then $\int_{[0,1]}\int_\Omega\Phi_t^i(y)\e\mu_t(y)\e\Leb(t)$ represents the (total) information collected by the sensor placed at $x_i$. In applications, this information is obtained by temporal sampling and one is typically interested in the information collected at specific time instances instead. We may include a temporal blurring which naturally circumvents the fact that $\widehat K\mu$ cannot realize pointwise evaluation of $\mu$: Let $t_0<t_1<\ldots<t_M$ be given time points in $[0,1]$. For each time $t_j$ we may consider 
    \begin{equation*}
        \int_{[0,1]}\phi_j(t)\int_\Omega\Phi_t^i(y)\e\mu_t(y)\e\Leb(t),
    \end{equation*}
    where $\phi_j\in C([0,1])$ models the temporal blur at $t_j$. In summary, we have $\Psi^i_j=\phi_j\Phi^i\in C([0,1]\times\C)$. Now define linear $\widetilde K:L^2_w([0,1];\M(\C))\to \R^{L\times (M+1)}$ by 
    \begin{equation*}
        (\widetilde K\mu)_j^i=\int_{[0,1]}\int_\Omega \Psi_j^i\e\mu\e\Leb
    \end{equation*}
    for each $i=1,\ldots,L$ and $j=0,1,\ldots,M$. As in \cref{KInfVal}, it is not difficult to see that $\widetilde K$ satisfies the continuity property in \cref{BVtp} so that we can define $K$ as the restriction of $\widetilde K$ to $\adm$. The non-locality in time may be regulated by $\phi_j$ given as the density of a truncated normal distribution (for a prioritization of time points close to $t_j$ regulated by the variance) or an indicator function $1_{[0,1]\cap[t_j-\delta_j,t_j+\delta_j]}$ (for a uniform weighting of information close to $t_j$, which still yields a well-defined operator $K$). The physical spreading described by $\varphi_t^i$ may as well be given by a Gaussian filter (which is typical for the spreading of light). 
    \end{examp}
	\section{Preliminaries}
    \label{Preliminaries}
    In this section, we introduce our notation and recapitulate the notions of BV curves and their \cad\: representatives.
	\subsection{Basic notation} 
    \label{BasNot}
    We use the following standard notation. 
    \begin{itemize} 
        \item We write $\R_+=[0,\infty)$ and $\R_+^*=(0,\infty)$. Further, we set $I=[0,1]$ for the \textit{unit interval}. We predominantly use this notation if $t\in I$ represents the time variable.
        \item $\Omega\subset\R^n$ denotes some non-empty, compact, and convex (spatial) \textit{domain}.
        \item If $F:X\to\R\cup\{ \infty \}$ and $c\in\R$, then we write $L_c^-(F)$ for the \textit{sublevel set} $\{ x\in X\:|\:F(x)\leq c \}$.
        \item We write $e_t(\gamma)\in\Omega$ for the \textit{evaluation} of a curve $\gamma:I\to\Omega$ at $t\in I$.
        \item If $X$ is a topological space, then we write $\B(X)$ for its \textit{Borel $\sigma$-algebra}. The \textit{restriction} $\mu\mres B$ of a map $\mu:\B(X)\to\R$ (e.g.\:Radon measure) to $B\in\B(X)$ is defined by 
        \begin{equation*}
        (\mu\mres B)(\tilde B) =\mu(B\cap\tilde B)
        \end{equation*}
        for all $\tilde B\in\B(X)$.
        \item The \textit{pushforward} $f_\#\mu$ of a measure $\mu$ on $X_1$ under a measurable map $f:X_1\to X_2$ is the measure defined by $f_\#\mu (M)=\mu(f^{-1}(M))$ for all measurable subsets $M\subset X_2$.
        \item We denote the one-dimensional \textit{Lebesgue measure} by $\Leb$.
        \item Let $A\subset\R$ be Lebesgue measurable. For $1\leq p\leq\infty$ we define $L^p(A)$ as the \textit{Lebesgue space} of equivalence classes of Lebesgue measurable functions $f:A\to\R$ with $\int_A|f|^p\e\Leb<\infty$ if $p<\infty$, $\esssup_A|f|<\infty$ if $p=\infty$. Further, if $(X,d)$ is a metric space, then we let $L^p(A;(X,d))$ denote the set of equivalence classes of Lebesgue measurable maps $f:A\to(X,d)$ for which $d(f,x)\in L^p(A)$ for some (and thus all if $A$ is bounded) $x\in X$. In each of the two definitions (of $L^p(A)$ and $L^p(A;(X,d))$) two maps belong to the same class if they coincide a.e.\:in $A$.
        \item We write $\probs(\Omega)$ for the space of \textit{probability measures} on $\Omega$, 
        \begin{equation*}
          \probs(\Omega)=\{ \rho:\B(\Omega)\to[0,1]\:|\:\rho\textup{ $\sigma$-additive with }\rho(\C)=1 \}.  
        \end{equation*}    
        \item The \textit{Wasserstein-$1$ space} is defined by $\wass=(\probs(\Omega),W_1)$. It is equipped with the \textit{Wasserstein-$1$ distance} $W_1$ given by
        \begin{equation*}
        W_1(\rho^+,\rho^-)=\inf_\pi\int_{\Omega\times\Omega}|x-y|\e\pi(x,y),
        \end{equation*}
        where the infimum is over \textit{transport plans} between $\rho^+$ and $\rho^-$, that is $\pi\in\probs(\Omega\times\Omega)$ with $\pi(B\times\C)=\rho^+(B)$ and $\pi(\C\times B)=\rho^-(B)$ for all $B\in\B(\C)$. The space $\wass$ is complete and separable since $\Omega$ is complete and separable \cite[Thm.\:6.18]{V09}.
        \item The space of \textit{Radon measures} on $\Omega$ (which by the Riesz representation theorem can be identified with the topological dual of the continuous functions $C(\Omega)$ with uniform norm $\|\cdot\|_\infty$) is given by 
        \begin{equation*}
            \M(\C)=\{ \mu:\B(\Omega)\to\R\:|\:\mu\textup{ $\sigma$-additive}  \}.
        \end{equation*}
        The corresponding operator norm is called \textit{total variation} and we denote it by
        \begin{equation*}
            \|\mu\|_{\mathcal{M}}=\sup_{\|\phi\|_\infty\leq 1}\int_\Omega\phi\e\mu.
        \end{equation*}
        We indicate the weak-$*$ convergence in $\M(\C)$ by $\ws$, that is $\mu^i\ws\mu$ in $\M(\C)$ if and only if $\langle\phi,\mu^i\rangle=\int\phi\e\mu^i\to\int\phi\e\mu=\langle\phi,\mu\rangle$ for all $\phi\in C(\C)$. 
        \item Let $A\subset\R$ be closed. Along with the above duality $\M(\C)=C(\C)^*$, we define $L_w^2(A;C(\C))$ as the space of equivalence classes of Lebesgue measurable maps $f:A\to C(\C)$ with $\int_A\|f\|_\infty^2\e\Leb<\infty$. The space $L_w^2(A;C(\C))$ with norm 
        \begin{equation*}
        \|\phi\|_{L_w^2}=\sqrt{\int_A\|\phi\|_\infty^2\e\Leb}
        \end{equation*}
        is a separable Banach space (see \cite[Section 8.18.1]{Edw65} or \cite[Thm.\:I.5.18]{Warga72}). Its (topological) dual is given by $L_w^{2}(A;\M(\C))$ \cite[Thm.\:8.20.3]{Edw65}, the space of equivalence classes of weakly measurable maps $f:A\to\M(\C)$\footnote{By \cite[Sections 1.11.1 \& 1.11.3]{Edw65} a map $f:I\to\M(\C)$ is weakly measurable if it is Lebesgue measurable with respect to Borel $\sigma$-algebra on $\M(\C)$ induced by the subspace topology of $\M(\C)=C(\C)^*\subset C(\C)'$. The algebraic dual $C(\C)'$ is equipped with the weak topology, i.e.\:the weakest topology for which each functional $f\mapsto\ell(f)$ is continuous ($\ell\in C(\C)'$).} with $\int_A\|f\|_\M^2\e\Leb<\infty$. The dual norm becomes
        \begin{equation*}
        \|\mu\|_{L_w^{2}}=\sqrt{\int_A\|\mu\|_\M^2\e\Leb}
        \end{equation*}
        and the dual pairing is given by $\langle\phi,\mu\rangle_{L_w^{2}}=\int_A\langle\phi,\mu\rangle\e\Leb$. In each of the two definitions (of $L_w^2(A;C(\C))$ and $L_w^{2}(A;\M(\C))$) two maps belong to the same class if they coincide a.e.\:in $A$. (The subscript in $L_w^2(A;C(\C))$ is justified by the fact that (Lebesgue) measurability and weak measurability coincide for $C(\C)$-valued maps, which follows from the Pettis measurability theorem and the separability of $C(\C)$.)
        \item Finally, we recall some terminology from convex analysis (see \cite[p.\:1]{K57} and \cite[Section 8]{R70}). Let $V$ be a real vector space and $C\subset V$ convex. An \textit{extremal point} (or extreme point) of $C$ is a point $v\in C$ such that $C\setminus\{ v \}$ is convex (this notion goes back to Minkowski). We write $\textup{ext}(C)$ for the set of extremal points of $C$. We say that $C$ is \textit{linearly closed} if the intersection of $C$ with every one-dimensional affine space of $V$ is closed. A \textit{ray} of $V$ is any set of the form $\{v + tw \:|\: t >0\}$ for $v,w \in V$ and $w\neq 0$. The \textit{lineality space} of $C$ is defined as $\textup{lin}(C):=\textup{rec}(C)\cap (-\textup{rec}(C))$, where $\textup{rec}(C)=\{ v\in V \:|\: C+\R_+^*v\subset C \}$ denotes the \textit{recession cone} of $C$.
    \end{itemize}
	\subsection{BV curves and \cad\:representatives}
    \label{BVetc}
    In this section, we recall the notions of BV curves and their \cad\:representatives in a metric space $(X,d)$ (following \cite{ALS24}). For a more comprehensive theory of BV curves in metric spaces the reader may consult \cite{A90}. In this article, we are particularly interested in $(X,d)=\wass$.
    \begin{defin}[Variation]
    The \textbf{variation} of a map $\mu : I \to (X,d)$ on $J \subset I$ is defined by
    \begin{align*}
        \var(\mu,J) = \sup \left\{\sum_{i=1}^N d(\mu_{t_i}, \mu_{t_{i-1}})\:\middle|\: t_0 < t_1<\ldots < t_N,\, t_i \in J  \right\}. 
    \end{align*}
    We abbreviate $\var(\mu)=\var(\mu,I)$.
    \end{defin}
    \begin{defin}[Essential variation, BV curves]
    The \textbf{essential variation} of $\mu\in L^1(I;(X,d))$ on $J \subset I$ is given by
    \begin{equation*}
     \essvar(\mu,J)= \inf\left\{ \var(\tilde\mu,J)\:\middle|\:\mu=\tilde\mu\textup{ a.e.}  \right\}.
    \end{equation*}
    We write $\essvar(\mu)=\essvar(\mu,I)$. If $\essvar(\mu) <\infty$, then we call $\mu$ a \textbf{BV curve} in $(X,d)$. We write $BV(I;(X,d))$ for the space of BV curves in $(X,d)$.
    \end{defin}
    \begin{rem}[BV Wasserstein curves]
    \label{WassCurves}
    We have $\R_+BV(I;\mathcal{W}_p(\Omega))\subset \R_+BV(I;\mathcal{W}_1(\Omega))=\adm$ for all $p\in[1,\infty)$, where $\mathcal{W}_p(\Omega)=(\probs(\C),W_p)$ denotes the Wasserstein-$p$ space with $W_p(\rho^+,\rho^-)^p=\inf_\pi\int|x-y|^p\e\pi(x,y)$ (infimum over transport plans $\pi$ between $\rho^+$ and $\rho^-$). This follows easily from the estimate (invoking H\"older's inequality) 
    \begin{equation*}
        W_1(\rho^+,\rho^-)\leq W_p(\rho^+,\rho^-)\leq\textup{diam}(\C)^{\frac{p-1}{p}}W_1(\rho^+,\rho^-)^{\frac{1}{p}}.
    \end{equation*}
    Hence, our set $\adm$ of decision variables contains all the (equivalence classes of) classical Wasserstein curves with arbitrary weight and bounded (essential) variation.
    \end{rem}
    The following lemma is a version of a standard result. We briefly recall the argument since we did not find it for our setting.
    \begin{lem}[$L^p$ regularity of BV curves]
     We have $BV(I;(X,d)) \subset L^p(I;(X,d))$ for every $1 \leq p \leq \infty$.
    \end{lem}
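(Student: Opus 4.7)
The plan is to show that every BV curve is in fact essentially bounded, after which membership in $L^p(I;(X,d))$ for all $1 \leq p \leq \infty$ follows immediately from the finite Lebesgue measure of $I$. The only non-trivial ingredient is that a pointwise representative of finite variation has bounded image in $(X,d)$.

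First I would unfold the definition of $\essvar$: since $\mu \in BV(I;(X,d))$, the infimum defining $\essvar(\mu)$ is finite, so I can pick a pointwise representative $\tilde\mu : I \to X$ with $\tilde\mu = \mu$ a.e.\:in $I$ and $V := \var(\tilde\mu) \leq \essvar(\mu) + 1 < \infty$. Next, fix an arbitrary $t_0 \in I$ and any $x \in X$. For every $t \in I$, applying the definition of $\var$ to the two-point partition $\{t_0, t\}$ (ordered appropriately) gives $d(\tilde\mu_t, \tilde\mu_{t_0}) \leq V$, and hence by the triangle inequality
\begin{equation*}
d(\tilde\mu_t, x) \leq d(\tilde\mu_t, \tilde\mu_{t_0}) + d(\tilde\mu_{t_0}, x) \leq V + d(\tilde\mu_{t_0}, x) =: M < \infty,
\end{equation*}
where $M$ is a constant independent of $t$. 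In particular $\tilde\mu \in L^\infty(I;(X,d))$.

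Finally, I would conclude that for any $1 \leq p < \infty$, since $\Leb(I) = 1$,
\begin{equation*}
\int_I d(\tilde\mu_t, x)^p \e\Leb(t) \leq M^p < \infty,
\end{equation*}
so $\tilde\mu \in L^p(I;(X,d))$, and because $\tilde\mu = \mu$ a.e.\:the same class $\mu$ belongs to $L^p(I;(X,d))$. The case $p = \infty$ is already covered by the previous step. I do not anticipate a genuine obstacle here; the argument is essentially a one-line observation that a curve of finite pointwise variation cannot escape any ball of radius exceeding its variation around a fixed reference point, combined with the fact that $I$ has finite measure so $L^\infty \hookrightarrow L^p$ trivially. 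The only mild subtlety is to remember that $\essvar$ is an infimum which need not be attained, so one must use an approximate minimizer $\tilde\mu$ rather than an optimal one.
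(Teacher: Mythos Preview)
Your proposal is correct and follows essentially the same argument as the paper: pick a representative $\tilde\mu$ of finite pointwise variation, use the triangle inequality $d(\tilde\mu_t,x)\leq d(\tilde\mu_t,\tilde\mu_{t_0})+d(\tilde\mu_{t_0},x)\leq\var(\tilde\mu)+d(\tilde\mu_{t_0},x)$ to obtain a uniform bound, and then use $\Leb(I)<\infty$ to pass to $L^p$. The only cosmetic differences are that the paper takes $t_0=0$ and bounds $\int_I d(\mu,x)^p\e\Leb$ via the convexity inequality $(a+b)^p\leq 2^{p-1}(a^p+b^p)$ rather than your direct bound by $M^p$.
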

    \begin{proof}
    Let $\mu\in BV(I;(X,d))$ and $x\in X$ with $d(\mu,x)\in L^1(I)$. Pick any representative $\tilde\mu$ of $\mu$ with $\var(\tilde\mu)<\infty$. Then $d(\mu,x)\leq d(\tilde\mu,\tilde\mu_0)+d(\tilde\mu_0,x)\leq \var(\tilde\mu)+d(\tilde\mu_0,x)$ a.e. Thus, for $p<\infty$ we have (using the convexity of $t\to t^p$ and the boundedness of $I=[0,1]$) $\int_Id(\mu,x)^p\e\Leb\leq 2^{p-1}(\var(\tilde\mu)^p+d(\tilde\mu_0,x)^p)<\infty$, and for $p=\infty$ we get $\esssup_I d(\mu,x)\leq \sup_I d(\tilde\mu,x)\leq \var(\tilde\mu)+d(\tilde\mu_0,x)<\infty$.
    \end{proof}
    As in \cite{ALS24}, we use \cite[Ch.\:6]{SS05} to generalize the definition of variation measure to the metric space setting.
    \begin{defin}[Variation measure]
    Let $\mu\in BV(I;(X,d))$. Define a non-decreasing function $V:I\to\R$ by $V(t)=\essvar(\mu,(0,t))$ for all $t\in I$. The \textbf{variation measure} $|D\mu|:\B(I)\to\R_+$ of $\mu$ is defined as the Lebesgue\textendash Stieltjes measure \cite[Section 6.3.3, Thm.\:3.5]{SS05} which satisfies
    \begin{equation*}
    |D\mu|((a,b])=V(b) - V(a)
    \end{equation*}
    for all $(a,b]\subset I$ with $a<b$.
    \end{defin}
    We recall the definition of \cad\:curves. Every $\mu\in BV(I;(X,d))$ can be represented by such a curve $\bar\mu:I\to(X,d)$. Further, a \cad\:representative $\bar\mu$ is uniquely determined on $[0,1)$.
    \begin{defin}[C\`{a}dl\`{a}g curves in $(X,d)$, spaces $\mathcal{D}_E(\C),\D_W(\C)$]
    A map $\mu :I\to (X,d)$ is called a \textbf{\cad\:curve} if $\mu$ is right-continuous and the left limit $\lim_{\tilde t\nearrow t}\mu_{\tilde t}$ exists for every $t\in I$ (continue \`{a} droite, limite \`{a} gauche). We write $\mathcal{D}_E(\Omega)$ for the set of \cad\:curves $\gamma:I\to\Omega$ (with $d$ the Euclidean distance) and $\mathcal{D}_W(\Omega)$ for the \cad\:curves $\mu:I\to\wass$ (with $d=W_1$ the Wasserstein-$1$ distance).  
    \end{defin}
    \begin{rem}[Skorohod metric]
    If $(X,d)$ is complete and separable, then the space of \cad\:curves $\D=\{ \gamma:I\to(X,d)\:|\:\gamma\textup{ \cad} \}$ equipped with the \textbf{Skorohod $J_1$ metric} $d_\D$ is complete and separable \cite[Thm.\:12.2 \& p.\:131]{B} (cf.\:text passage below \cite[Thm.\:2.10]{ALS24}). Thus, this property is statisfied for both $\mathcal{D}_E(\Omega)$ and $\mathcal{D}_W(\Omega)$. We recall the definition of $d_\D$ (see construction in \cite[p.\:123-126]{B}). Let $\Lambda$ be the set of strictly increasing homeomorphisms $\lambda:I\to I$. 
    A complete metric is obtained by requiring that $\lambda$ is close to the identity in the sense that
    \begin{equation*}
    f(\lambda)=\sup_{\substack{s,t\in I\\ s<t}}\left|  \log\frac{\lambda(t)-\lambda(s)}{t-s} \right|
    \end{equation*}
    is small. The Skorohod $J_1$ metric is then given by
    \begin{equation*}
        d_\D(\gamma_1,\gamma_2)=\inf_{\lambda\in\Lambda}\max\{ f(\lambda), \|\gamma_1-\gamma_2\circ\lambda\|_\infty \}.
    \end{equation*}
    \end{rem}
 Next, we apply the compactness statement \cite[Thm.\:2.4 (i)]{A90} to our setting. Recall the definitions of admissible set $\adm$ and regularizer $\reg_{\alpha,\beta}$ (\cref{admBV,admReg}). It is readily established that we can assume $\alpha=\beta=1$ in the proofs (the arguments for general $\alpha,\beta\in\R_+^*$ are the same).
    \begin{nota}[Regularizer $\reg$]
    \label{standReg}
    We write $\reg=\reg_{1,1}$, that is $\reg(\mu)=\omega(\mu)+\essvar(\mu)$ for every $\mu\in\adm$.
    \end{nota}
    \begin{lem}[Compactness in $(\adm,\reg_{\alpha,\beta})$]
    \label{compactness}
    Let $(\mu^j)\subset\adm$ with $\sup_j\reg_{\alpha,\beta}(\mu^j)<\infty$. Then, up to a subsequence, we have $\mu^j\ws\mu$ a.e.\:and $\reg_{\alpha,\beta}(\mu)\leq\liminf_j\reg_{\alpha,\beta}(\mu^j)$ for some $\mu\in\adm$.
    \end{lem}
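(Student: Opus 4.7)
The plan is to exploit the positivity of $\alpha,\beta$ to convert the bound on $\reg_{\alpha,\beta}(\mu^j)$ into separate bounds on weight and essential variation, normalize by the weight to obtain probability-measure-valued BV curves, apply Ambrosio's metric-valued Helly-type selection theorem \cite[Thm.~2.4(i)]{A90} to extract a pointwise a.e. convergent subsequence, and undo the normalization. First I would observe that, since $\alpha,\beta\in\R_+^*$, the uniform bound on $\reg_{\alpha,\beta}(\mu^j)$ implies uniform bounds on both $(\omega(\mu^j))_j$ and $(\essvar(\mu^j))_j$; by Bolzano--Weierstrass I may pass to a subsequence (not relabelled) along which $\omega(\mu^j)\to\omega$ for some $\omega\in\R_+$.

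In the main case $\omega>0$ I would set $\rho^j:=\mu^j/\omega(\mu^j)\in BV(I;\wass)$ for $j$ large. Since $W_1(\omega\sigma,\omega\tau)=\omega W_1(\sigma,\tau)$, the essential variation rescales as $\essvar(\rho^j)=\essvar(\mu^j)/\omega(\mu^j)$ and remains bounded. The key input is that $\Omega$ compact forces $\wass$ to be compact---$W_1$ metrizes the weak-$*$ topology on $\probs(\C)$ and the latter is weak-$*$ compact by Banach--Alaoglu---so the hypothesis of \cite[Thm.~2.4(i)]{A90} that $\{\rho^j_t:j\in\mathbb{N}\}$ be relatively compact for a.e. $t\in I$ is automatic. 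That theorem then furnishes a further subsequence and some $\rho\in BV(I;\wass)$ with $\rho^j_t\to\rho_t$ in $\wass$ for a.e. $t\in I$ and $\essvar(\rho)\leq\liminf_j\essvar(\rho^j)$. Setting $\mu:=\omega\rho\in\adm$ gives $\omega(\mu)=\omega$, and, because $W_1$-convergence on $\probs(\C)$ coincides with weak-$*$ convergence in $\M(\C)$, multiplying by the scalars $\omega(\mu^j)\to\omega$ yields $\mu^j_t\ws\mu_t$ for a.e. $t\in I$. Lower semicontinuity then follows from $\alpha\omega(\mu^j)\to\alpha\omega(\mu)$ combined with $\essvar(\mu^j)=\omega(\mu^j)\essvar(\rho^j)$ and $\liminf_j\omega(\mu^j)\essvar(\rho^j)=\omega\liminf_j\essvar(\rho^j)\geq\omega\essvar(\rho)=\essvar(\mu)$.

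The degenerate case $\omega=0$ is essentially free: choosing $\mu\equiv 0\in\adm$, the identity $\|\mu^j_t\|_\M=\omega(\mu^j)\to 0$ forces $\mu^j_t\ws 0=\mu_t$ for every $t\in I$, while $\reg_{\alpha,\beta}(0)=0\leq\liminf_j\reg_{\alpha,\beta}(\mu^j)$ is trivial. The only non-routine step is the invocation of Ambrosio's selection theorem; the remainder is bookkeeping, the two key observations being that scaling a probability-valued BV curve by a positive constant scales both weight and essential variation linearly, and that $W_1$ metrizes weak-$*$ convergence on $\probs(\C)$ when $\C$ is compact.
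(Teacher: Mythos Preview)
Your proposal is correct and follows essentially the same approach as the paper: split into cases according to whether $\omega(\mu^j)\to\omega$ is positive or zero, normalize in the positive case, and invoke \cite[Thm.~2.4(i)]{A90} (using that $\wass$ is compact because $W_1$ metrizes weak-$*$ convergence and $\Omega$ is compact) to obtain pointwise a.e.\ convergence together with lower semicontinuity of the essential variation. The only cosmetic differences are that the paper reduces to $\alpha=\beta=1$ and bundles the final inequality as $\reg(\mu)=\omega(\mu)\reg(\rho)\leq\omega(\mu)\liminf_j\reg(\rho^j)$, whereas you keep $\alpha,\beta$ general and treat the two summands separately.
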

    \begin{proof}
    Since $\reg(\mu^j)=\omega(\mu^j)+\essvar(\mu^j)$ is uniformly bounded, we get $\omega(\mu^j)\to\omega\in\R_+$ up to a subsequence. Recall that $\mu^j=\omega(\mu^j)\rho^j$ with $\rho^j\in BV(I;\wass)$.
    
	\underline{Case $\omega>0$:} If $\omega>0$, then $\sup_j\essvar(\rho^j)=\sup_j\omega(\mu^j)^{-1}\essvar(\mu^j)$ must be finite (neglect the finite number of indices for which $\omega(\mu^j)=0$). Moreover, we clearly have $\sup_j\int_IW_1(\rho^j,\delta_0)\e\mathcal{L}<\infty$ because $\C$ is bounded. Further, metric space $\wass$ is separable and every bounded and closed set in $\wass$ is compact by the Banach\textendash Alaoglu theorem ($W_1$ metrizes weak-$*$ convergence). Hence, by \cite[Thm.\:2.4 (i)]{A90} there is some $\rho\in BV(I;\wass)$ such that (up to a subsequence) $\lim_jW_1(\rho^j,\rho)=0$ a.e. and $\essvar(\rho)\leq\liminf_j\essvar(\rho^j)$. Therefore, we obtain $\rho^j\ws\rho$ a.e., thus $\mu^j\ws\mu=\omega\rho$ a.e. In summary, we have
	\begin{equation*}
	\reg(\mu)=\omega(\mu)\reg(\rho)\leq\omega(\mu)\liminf_j\reg(\rho^j)=\liminf_j\omega(\mu^j)\reg(\rho^j)=\liminf_j\reg(\mu^j).
	\end{equation*} 
	\underline{Case $\omega=0$:} If $\omega=0$, then we directly obtain $\mu^j\ws 0$ a.e. Noting $\reg (0)=0$ yields the inequality.
    \end{proof}
	\section{Existence of sparse solution for BV curve tracking}
	\label{Proofs} 
    In this section, we always take $\alpha=\beta=1$ to make the proofs more readable (recall \cref{standReg}). The arguments are the same for general $\alpha,\beta\in\R_+^*$ (as we already noticed in \cref{compactness}).
    First, we prove the existence of a minimizer for \cref{BVtp} (\cref{existence}) and the stability property in \cref{stability}. The first statement is a direct consequence of \cref{compactness}.
    \begin{proof}[Proof of \cref{existence}]
	Recall that $\fid$ is bounded from below and proper. Therefore, we can pick a minimizing sequence $(\mu^j)\subset\mathcal{A}$. Since $\reg(\mu^j)$ is uniformly bounded, we can apply \cref{compactness}: there exists $\mu\in\adm$ such that $\mu^j\ws\mu$ a.e.\:and $\reg(\mu)\leq\liminf_j\reg(\mu^j)$ (up to a subsequence). By the regularity of $\for$ (it satisfies $\for(\mu^j)\rightharpoonup\for(\mu)$) and $\fid$ (it is weakly lower semicontinuous) we obtain $\fid(\for(\mu))\leq\liminf_j\fid(\for(\mu^j))$. Hence, the BV curve $\mu$ is a minimizer of \cref{BVtp}.
	\end{proof}
    \begin{proof}[Proof of the stability statement in \cref{stability}]
    The argument is similar to \cite[Thm.\:4.7]{BF20}. By the triangle inequality, the optimality of the $\mu^j$, $\reg(0)=0$, and $f^j\to \hat f$ we have
	\begin{multline*}
	\fid_{\hat{f}}(\for(\mu^j))+\reg(\mu^j)=\frac{1}{2}\| \for(\mu^j)-\hat f \|_Y^2+\reg(\mu^j)\leq \| \for(\mu^j)-f^j \|_Y^2+\reg(\mu^j)+\| f^j-\hat f \|_Y^2\\
    \leq 2\left(\frac{1}{2}\| \for(\mu^j)-f^j \|_Y^2+\reg(\mu^j)\right)+\| f^j-\hat f \|_Y^2\leq \| \for(0)-f^j \|_Y^2+\| f^j-\hat f \|_Y^2 \leq C<\infty
	\end{multline*}
    for some constant $C>0$. Hence, by the proof of \cref{existence} there exists $\hat{\mu}\in\adm$ with $\mu^j\ws\hat{\mu}$ a.e.\:(up to a subsequence) and $\fid_{\hat{f}}(\for(\hat{\mu}))+\reg(\hat{\mu})\leq \liminf_j\fid_{\hat{f}}(\for(\mu^j))+\reg(\mu^j)$. Further, note that (by taking the $\liminf$ on both sides of the first inequality in the above estimate and using $f^j\to \hat{f}$ again) $\liminf_j\fid_{\hat{f}}(\for(\mu^j))+\reg(\mu^j)\leq\liminf_j\fid_{f^j}(\for(\mu^j))+\reg(\mu^j)$. Using this, the optimality of the $\mu^j$, and the definition of $\fid_f$ (together with $f^j\to \hat f$) we get
	 \begin{equation*}
	 \fid_{\hat{f}}(\for(\hat{\mu}))+\reg(\hat{\mu})\leq\liminf_j\fid_{f^j}(\for(\mu^j))+\reg(\mu^j)\leq\liminf_j\fid_{f^j}(\for(\mu))+\reg(\mu)=\fid_{\hat{f}}(\for(\mu))+\reg(\mu)
	 \end{equation*}
	 for all $\mu\in\adm$, which shows the optimality of $\hat{\mu}$.
    \end{proof}
	\subsection{Characterization of extremal points in $L_c^-(\reg)$}
	\label{extremalpoints}	
	In this section, we characterize the extremal points of the sublevel sets (\cref{extremalpointschar})
	\begin{align*}
	L_c^-(\reg)=\{ \mu\in\adm\:|\:\reg(\mu)\leq c \}.
	\end{align*}
    We will need the following statement.
		\begin{lem}[Bound for essential variation of pushforward under evaluation map]
		\label{varineq}
		Let $\Gamma\in\mathcal{B}(\mathcal{D}_E(\C))$ and $\eta\in \R_+\mathcal{P}(\DE(\C)\cap\{ \var<\infty \} )$ with $\int_{\mathcal{D}_E(\C)}|\J\gamma|(I)\e\eta(\gamma)<\infty$. Assume that $\eta(\Gamma)\in\R_+^*$.
		Then the map $t\mapsto\mu_t=(e_t)_\# (\eta\mres \Gamma)$ satisfies 
		\begin{equation*}
		\essvar(\mu)\leq\int_{\Gamma}\var(\gamma)\:\e\eta(\gamma).
		\end{equation*}
	\end{lem}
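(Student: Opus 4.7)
The plan is to construct, for every time partition, natural transport plans between consecutive snapshots of $\mu$ by pushing $\eta\mres\Gamma$ forward under pairs of evaluation maps. Summing the resulting Kantorovich upper bounds and exchanging the finite sum with the integral over $\eta$ reduces the estimate to the pointwise triangle inequality $\sum_i|\gamma_{t_i}-\gamma_{t_{i-1}}|\leq\var(\gamma)$, which holds for each curve $\gamma$.

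Concretely, set $\omega=\eta(\Gamma)>0$ and $\rho_t=\omega^{-1}\mu_t\in\probs(\C)$, so that $\mu=\omega\rho$. Arguing as in the proof of \cref{compactness}, one has $\essvar(\mu)=\omega\,\essvar(\rho)$. Since $\essvar(\rho)\leq\var(\rho)$ for the specific representative $\rho$, it suffices to prove
\begin{equation*}
\var(\rho)\leq\omega^{-1}\int_\Gamma\var(\gamma)\,\e\eta(\gamma).
\end{equation*}
To this end, fix a partition $0\leq t_0<t_1<\ldots<t_N\leq 1$ and, for each $i\in\{1,\ldots,N\}$, define
\begin{equation*}
\pi_i=(e_{t_{i-1}},e_{t_i})_\#(\eta\mres\Gamma).
\end{equation*}
By construction, the marginals of $\pi_i$ are $\mu_{t_{i-1}}$ and $\mu_{t_i}$, so the renormalization $\omega^{-1}\pi_i$ is an admissible transport plan between $\rho_{t_{i-1}}$ and $\rho_{t_i}$. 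The change of variables formula for pushforwards then yields
\begin{equation*}
W_1(\rho_{t_{i-1}},\rho_{t_i})\leq\omega^{-1}\int_{\C\times\C}|x-y|\,\e\pi_i(x,y)=\omega^{-1}\int_\Gamma|\gamma_{t_{i-1}}-\gamma_{t_i}|\,\e\eta(\gamma).
\end{equation*}

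Summing over $i$, swapping the finite sum with the integral by Fubini--Tonelli, and using the pointwise bound $\sum_i|\gamma_{t_{i-1}}-\gamma_{t_i}|\leq\var(\gamma)$ produces
\begin{equation*}
\sum_{i=1}^N W_1(\rho_{t_{i-1}},\rho_{t_i})\leq\omega^{-1}\int_\Gamma\var(\gamma)\,\e\eta(\gamma).
\end{equation*}
Taking the supremum over all partitions on the left-hand side completes the bound, and multiplying by $\omega$ gives $\essvar(\mu)\leq\int_\Gamma\var(\gamma)\,\e\eta(\gamma)$.

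The main technical point to verify is the measurability and integrability needed to make the right-hand side well-defined. Each evaluation map $e_t:(\DE(\C),d_\D)\to\C$ is Borel measurable, so $\gamma\mapsto|\gamma_{t_{i-1}}-\gamma_{t_i}|$ is Borel. Using right-continuity of càdlàg curves, the full variation $\var(\gamma)$ can be expressed as a countable supremum of such Borel-measurable sums over rational partitions (with endpoints $0,1$ appended), hence $\gamma\mapsto\var(\gamma)$ is Borel measurable as well. Integrability then follows from the hypothesis $\int_{\DE(\C)}|\J\gamma|(I)\,\e\eta(\gamma)<\infty$ together with the identity $\var(\gamma)=|\J\gamma|(I)$ valid for finite-variation càdlàg curves.
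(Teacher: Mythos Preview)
Your proof is correct and more elementary than the paper's. The paper invokes \cite[Thm.\:3.1]{ALS24} to obtain both that $\mu$ is \cad\ and the measure inequality $|\J\mu|\leq\int_\Gamma|\J\gamma|\,\e\eta(\gamma)$; it then splits $\var(\mu)=|\J\mu|((0,1))+\mu_1^-$ and bounds the endpoint jump $\mu_1^-$ separately by constructing transport plans $\pi_t$ between $\mu_t$ and $\mu_1$ and passing to the limit $t\nearrow1$ via dominated convergence. Your approach bypasses all of this: by building the transport plans $(e_{t_{i-1}},e_{t_i})_\#(\eta\mres\Gamma)$ directly for every consecutive pair in an arbitrary partition and summing, you reduce the bound to the pointwise inequality $\sum_i|\gamma_{t_{i-1}}-\gamma_{t_i}|\leq\var(\gamma)$, which requires no structure theory for $\mu$ and no separate treatment of the terminal jump. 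The paper's route, in exchange, yields the stronger measure-level comparison $|\J\mu|\leq\int_\Gamma|\J\gamma|\,\e\eta$ along the way.

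One small remark on the final paragraph: the identity $\var(\gamma)=|\J\gamma|(I)$ you invoke is not quite what the paper records; its footnote uses instead $\var(\gamma)=|\J\gamma|((0,1))+\gamma_1^-$ with $\gamma_1^-\leq\textup{diam}(\Omega)$, from which integrability of $\var(\cdot)$ follows. Either way, measurability is immediate from the lower semicontinuity of $\gamma\mapsto\var(\gamma)$ \cite[Lem.\:2.13]{ALS24} (which your rational-partition argument also recovers), and finiteness of the right-hand side is in any case not needed for the inequality itself to hold.
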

	\begin{proof}
		First, we remark that $\int_{\Gamma}\var(\gamma)\:\e\eta(\gamma)$ is well-defined because $\gamma\mapsto\textup{var}(\gamma)$ is lower semicontinuous \cite[Lem.\:2.13]{ALS24}\footnote{\label{foot1}It is also finite. Indeed, we have $\var(\gamma)=\var(\gamma;(0,1))+\gamma_1^-=|\mathrm D\gamma|(0,1)+\gamma_1^-$ \cite[Lem.\:2.5 (1)]{ALS24} and $\gamma_1^-=\lim_{t\nearrow 1}|\gamma(t)-\gamma(1)|\leq\textup{diam}(\Omega)$ ($\gamma_1^-$ exists because $\gamma$ is \cad). Thus $\int_\Gamma\var(\gamma)\mathrm d\eta(\gamma)\leq\int_{\DE(\C)}|\mathrm D\gamma|(I)\mathrm d\eta(\gamma)+\textup{diam}(\Omega)<\infty$ by assumption.} (which also implies that $\DE(\C)\cap\{\textup{var}<\infty\}$ is Borel measurable). By \cite[Thm.\:3.1]{ALS24} the integral $\int_{\mathcal{D}_E(\C)}|\J\gamma|(I)\e\eta(\gamma)$ is well-defined, $\mu \in \eta(\Gamma)\mathcal{D}_W(\C)$, and
		\begin{equation}
		\label[ineq]{in1:eqD}
		|\J\mu|\leq\int_{\Gamma}|\J\gamma|\e\eta(\gamma).
		\end{equation}
		Using the definition of $\essvar$ and the characterizing properties of \cad\:curves we get
		\begin{equation*}
		\essvar(\mu)\leq\var(\mu)=\var(\mu;(0,1))+\mu_1^-,
		\end{equation*}
		where we use the abbreviation (for the jump at $t=1$)
		\begin{equation*}
		\mu_1^-=\lim_{t\nearrow 1}W_1(\mu_t,\mu_1).
		\end{equation*}
		For each $t\in I$ we define a transport plan (as in \cite[Def.\:3.4.9]{B05}) $\pi_t\in\eta(\Gamma)\mathcal{P}(\C\times\C)$ by
		\begin{equation*}
		\langle \varphi,\pi_t\rangle=\int_{\Gamma}\varphi(\gamma(t),\gamma(1))\mathrm{d}\eta(\gamma)
		\end{equation*}
		for all $\varphi\in C(\C\times \C)$ (the integrand $\gamma\mapsto \varphi(\gamma(t),\gamma(1))$ is Borel measurable by \cite[Prop.\:2.15]{ALS24} for all $\varphi\in C(\C\times \C)$). Moreover, the transport plans $\pi_t$ are admissible for $W_1(\mu_t,\mu_1)$ since
		\begin{equation*}
		\pi_t(B\times\C)=(\eta\mres\Gamma)(e_t^{-1}(B))=\mu_t(B)\qquad\textup{and}\qquad\pi_t(\C\times B)=\mu_1(B)
		\end{equation*}
		for all $B\in\mathcal{B}(\C)$. Hence, we obtain
		\begin{equation*}
		\mu_1^-\leq \limsup_{t\nearrow 1}\int_{\C\times\C}|x-y|\mathrm{d}\pi_t(x,y)=\limsup_{t\nearrow 1}\int_\Gamma|\gamma(t)-\gamma(1)|\mathrm{d}\eta(\gamma),
		\end{equation*}
		where we used $\varphi(x,y)=|x-y|$ in the equality. Note that (as in \cref{foot1}) $\gamma\mapsto|\gamma(t)-\gamma(1)|$ is bounded (uniformly in $t\in I$) by $\gamma\mapsto\textup{diam}(\Omega)$. Thus, using dominated convergence,
		\begin{equation*}
		\limsup_{t\nearrow 1}\int_\Gamma|\gamma(t)-\gamma(1)|\mathrm{d}\eta(\gamma)=\int_\Gamma\gamma_1^-\mathrm{d}\eta(\gamma).
		\end{equation*}
		In summary, we obtain (invoking \cite[Lem.\:2.5 (1)]{ALS24} in the first equality and \cref{in1:eqD} plus the above estimate in the second inequality)
		\begin{multline*}
		\essvar(\mu)\leq\var(\mu;(0,1))+\mu_1^-=|\J\mu|((0,1))+\mu_1^-\\
		\leq\int_{\Gamma}|\J\gamma|((0,1))\mathrm{d}\eta(\gamma)+\int_\Gamma\gamma_1^-\mathrm{d}\eta(\gamma)
		=\int_\Gamma\var(\gamma)\mathrm{d}\eta(\gamma).\qedhere
		\end{multline*}
	\end{proof}
	\begin{proof}[Proof of \cref{extremalpointschar}]
        Fix $c\in\R_+$. First, we note that zero is always an extremal point of $L_c^-(\reg)$ because $\reg(0)=\reg(\lambda\mu+(1-\lambda)\nu)$ with $\lambda\in(0,1),\mu,\nu\in L_c^-(\reg)$ implies $\mu=\nu=0$ by $\mu,\nu\geq 0$ and definition of $\reg$. We abbreviate
		\begin{equation*}
		\E=\{ \mu\in\adm\:|\:\reg(\mu)\in\{ 0,c \},\bar{\mu}_t=\omega(\mu)\delta_{x_t}\textup{ for all }t\in[0,1) \}.
		\end{equation*}
		\underline{$\ext(L_c^-(\reg))\subset \E$:} 
		Let $\mu\in\ext(L_c^-(\reg))$. By the above we can assume $\mu\neq 0$, thus $\mathcal{R}(\mu)= c$. Indeed, if we had $\mathcal{R}(\mu)\in (0,c)$, then $\mu=\lambda\cdot 0+(1-\lambda)(1-\lambda)^{-1}\mu$ and $\mathcal{R}((1-\lambda)^{-1}\mu)=(1-\lambda)^{-1}\mathcal{R}(\mu)\leq c$ for $\lambda\in (0,1)$ sufficiently small, thus $0,(1-\lambda)^{-1}\mu\in L_c^-(\reg)$ which would imply that $\mu$ is not extremal. 
		Let $\bar \mu$ be a \cad\:representative of $\mu$ with $\var(\bar{\mu})=\essvar(\mu)$ (which is finite by $\reg(\mu)<\infty$). By \cite[Thm.\:3.3]{ALS24} there exists a measure $\eta\in\omega(\mu)\mathcal{P}(\mathcal{D}_E(\C))$ (concentrated on $\mathcal{D}_E(\C)\cap\{ \var<\infty \}$) with $(e_t)_\#\eta=\bar \mu_t$ for all $t\in I$ such that
		\begin{equation*}
		|\J \bar \mu|=\int_{\mathcal{D}_E(\C)}|\J\gamma|\mathrm{d}\eta(\gamma)
		\end{equation*}
		as well as (see footnote on \cite[p.\:18]{ALS24})
		\begin{equation}
		\label{varint}
		\var(\bar \mu)=\int_{\mathcal{D}_E(\C)}\var(\gamma)\mathrm{d}\eta(\gamma).
		\end{equation}
		We want to show that $\bar \mu_t = \omega(\mu)\delta_{x_t}$ for every $t\in [0,1)$. To this end, we use a similar approach as in the proof of the claim in \cite[Thm.\:6]{bredies2021extremal}. By contradiction, assume that there exist $t_0\in [0,1)$ and $B_0 \in\B(\C)$ with $\bar{\mu}_{t_0}(B_0),\bar{\mu}_{t_0}(\C\setminus B_0)\in \R_+^*$. Now set
		\begin{equation*}
		\Gamma_0=\mathcal{D}_E(\C)\cap e_{t_0}^{-1}(B_0).
		\end{equation*}
		Set $\Gamma_0$ is Borel measurable by \cite[Prop.\:2.15]{ALS24}. Define
		\begin{equation*}
		\lambda_1=\frac{1}{c}\left(\eta(\Gamma_0)+\int_{\Gamma_0}\var(\gamma)\mathrm{d}\eta(\gamma)\right)\quad\textup{and}\quad\lambda_2=\frac{1}{c}\left(\eta(\mathcal{D}_E(\C)\setminus\Gamma_0)+\int_{\mathcal{D}_E(\C)\setminus\Gamma_0}\var(\gamma)\mathrm{d}\eta(\gamma)\right).
		\end{equation*}
		We have $\eta(\Gamma_0)>0$ since
		\begin{equation*}
		\eta(\Gamma_0)=\eta(e_{t_0}^{-1}(B_0))=\bar{\mu}_{t_0}(B_0)>0.
		\end{equation*}
		Therefore, we get $\lambda_1>0$. Similarly, we obtain $\lambda_2>0$. Moreover, 
		\begin{equation*}
		c(\lambda_1+\lambda_2)=\eta( \mathcal{D}_E(\C))+\int_{\mathcal{D}_E(\C)}\var(\gamma)\mathrm{d}\eta(\gamma)=\omega(\mu)+\essvar(\mu)=\reg(\mu)=c
		\end{equation*}
		by \cref{varint} and the choice of $\bar{\mu}$. Therefore, we get $\lambda_1,\lambda_2\in (0,1)$ with $\lambda_1+\lambda_2=1$. Now define $\bar{\mu}^1_t,\bar{\mu}^2_t\in\R_+^*\mathcal{P}(\C)$ by
		\begin{equation*}
		\bar{\mu}_t^1=\lambda_1^{-1}(e_t)_\#(\eta\mres\Gamma_0)\qquad\textup{and}\qquad\bar{\mu}_t^2=\lambda_2^{-1}(e_t)_\#(\eta\mres(\mathcal{D}_E(\C)\setminus\Gamma_0))
		\end{equation*}
		for all $t\in I$. Note that we directly get
		\begin{equation*}
		\bar{\mu}=\lambda_1\bar{\mu}^1+\lambda_2\bar{\mu}^2.
		\end{equation*}
		Now we show that $\bar{\mu}^1,\bar{\mu}^2$ are representatives of $\mu^1,\mu^2\in L_c^-(\reg)$ with $\mu^1\neq\mu^2$ (which yields the desired contradiction). First, note that $\mu^1\in L_c^-(\reg)$ by (the argument for $\mu^2$ is similar)
		\begin{equation*}
		\reg(\mu^1)=\omega(\mu^1)+\essvar(\mu^1)=\lambda_1^{-1}( \eta(\Gamma_0) +\essvar(t\mapsto (e_t)_\#(\eta\mres\Gamma_0)) )\leq c,
		\end{equation*}
		where the inequality follows from the definition of $\lambda_1$ and \cref{varineq}.
		Finally, we show $\mu^1\neq\mu^2$. We have 
		\begin{equation*}
		\lambda_1\bar\mu_{t_0}^1(B_0)=(e_{t_0})_\# (\eta\mres\Gamma_0)(B_0)=\eta(\Gamma_0)>0\quad\textup{and}\quad \lambda_2\bar\mu_{t_0}^2(B_0)=(e_{t_0})_\# (\eta\mres(\mathcal{D}_E(\C)\setminus\Gamma_0))(B_0)=0,
		\end{equation*}
		thus $\bar\mu^1_{t_0}(B_0)\neq\bar\mu^2_{t_0}(B_0)$.
		We prove that there exists $t_1\in (t_0,1)$ with $\bar\mu_t^1\neq\bar\mu_t^2$ for all $t\in(t_0,t_1)$. For a contradiction, assume that there is a sequence $(s_i)\subset (t_0,1)$ with $s_i\searrow t_0$ and $\bar\mu_{s_i}^1=\bar\mu_{s_i}^2$ for all $i$. Since $\bar{\mu}^1,\bar{\mu}^2$ are \cad\:by \cite[Thm.\:3.1]{ALS24}, we obtain
		\begin{equation*}
		0=\lim_iW_1(\bar\mu_{t_0}^1,\bar\mu_{s_i}^1)=\lim_iW_1(\bar\mu_{t_0}^1,\bar\mu_{s_i}^2),
		\end{equation*}
		thus $\bar\mu_{t_0}^1=\bar\mu_{t_0}^2$ (which contradicts $\bar\mu^1_{t_0}(B_0)\neq\bar\mu^2_{t_0}(B_0)$).
        
		\underline{$\ext(L_c^-(\reg))\supset \E$:} 
		Let $\mu\in\mathcal{E}$. The case $\mathcal{R}(\mu)=0$ is trivial. Hence, we can assume $\mathcal{R}(\mu)=c$. Write $\mu=\lambda\mu^1+(1-\lambda)\mu^2$ with $\lambda\in (0,1)$ and $\mu^1,\mu^2\in L_c^-(\reg)$. We show $\mu^1=\mu^2=\mu$. We have
		\begin{equation*}
		c=\mathcal{R}(\mu)\leq\lambda\mathcal{R}(\mu^1)+(1-\lambda)\mathcal{R}(\mu^2)\leq\lambda c+(1-\lambda)c=c.
		\end{equation*}
		Therefore, we get $\mathcal{R}(\mu^1)=\mathcal{R}(\mu^2)=c$. 
		Write $\mu^1 = \omega(\mu^1)\rho^1$ and $\mu^2 = \omega(\mu^2)\rho^2$ with $\rho^1,\rho^2 \in BV(I, \mathcal{W}_1(\C))$. Let $\bar{\mu},\bar \rho^1,\bar \rho^2$ be \cad\:representatives of $\mu,\rho^1,\rho^2$.   
		For every $t\in [0,1)$ and every $B\in\mathcal{B}(\C)$ we have (using $\bar{\mu}_t=\omega(\mu)\delta_{x_t}$)
		\begin{equation*}
		\lambda \omega(\mu^1)\bar{\rho}_t^1(B)+(1-\lambda)\omega(\mu^2)\bar{\rho}_t^2(B)=\bar{\mu}_t(B)=\begin{cases*}
		\omega(\mu)&if $x_t\in B$,\\
		0&else.
		\end{cases*}
		\end{equation*}
		In particular, we obtain
		\begin{equation*}
		\bar{\rho}_t^1(\C\setminus \{ x_t \})=\bar{\rho}_t^2(\C\setminus \{ x_t \})=0,
		\end{equation*}
		thus $\bar \rho_t^1 = \bar \rho_t^2 = \delta_{x_t}$ for every $t \in [0,1)$. 
		Using $\mathcal{R}(\mu^1)=\mathcal{R}(\mu^2)=c$, we finally conclude $\omega(\mu^1)=\omega(\mu^2)=\omega(\mu)$ as we wanted to prove.
	\end{proof}
	\subsection{Application of a representer theorem}
    \label{representersection}
    In this section, we apply \cref{extremalpointschar} to prove \cref{representer}. We will use the following abbreviations (recall the definitions in \cref{BasNot}).
    \begin{nota}[Solution set $\sol$, weak-$*$ topology $\T$]
    \label{someNotation}
    We define $\mathcal{S}$ as the set of solutions of \cref{BVtp} (recall \cref{existence}). Further, we write $\T$ for the weak-$*$ topology on $\V$.
    \end{nota}
    We will show (\cref{extNon}) that $\ext(\mathcal{S})$ is non-empty. This will allow us to pick and characterize an arbitrary element of $\ext(\sol)$ in the proof of \cref{representer}. In order to show $\ext(\mathcal{S})\neq\emptyset$, we use \cref{furtherProps} (see below) and the properties of $\adm$ stated in \cref{structAdm}. 
    \begin{proof}[Proof of \cref{structAdm}]
    First, we argue that $\adm$ is a subset of $\V$. This actually follows directly from the fact that $W_1$ induces the weak-$*$ topology on $\probs(\C)$ \cite[Thm.\:6.9]{V09}. Now it is straightforward to check that the weak-$*$ topology coincides with the weak (subspace) topology on $\probs(\C)$ stemming from $C(\Omega)'$. Hence, any map $f:I\to\probs(\C)$ is Lebesgue measurable (relative to the weak-$*$ topology on $\probs(\C)$) if and only if it is weakly measurable. Now let $\mu=\omega(\mu)\rho\in\adm$. Then, by definition, any representative of $\rho$ is Lebesgue measurable. Further, we clearly have $\|\rho\|_{L_w^{2}}=1$. Thus, using also the above equivalence of the measurability notions, we obtain $\rho\in \V$. Further, since $\V$ is a vector space, we also have $\mu=\omega(\mu)\rho\in \V$, which proves $\adm\subset \V$. Obviously, set $\adm$ is a cone. Its convexity follows from 
        \begin{multline*}
            \lambda \mu^1 + (1-\lambda)\mu^2 \\ = (\lambda\omega(\mu^1) + (1-\lambda)\omega(\mu^2)) \left[\frac{\lambda\omega(\mu^1)}{\lambda\omega(\mu^1) + (1-\lambda)\omega(\mu^2) }\rho^1 + \frac{(1-\lambda)\omega(\mu^2)}{\lambda\omega(\mu^1) + (1-\lambda)\omega(\mu^2) } \rho^2\right] \in \adm
        \end{multline*}
        for all $\mu^1=\omega(\mu^1)\rho^1,\mu^2=\omega(\mu^2)\rho^2\in\adm\setminus\{0\}$ and $\lambda\in(0,1)$. We note that the convex combination in the angular brackets satisfies the required measurability (by the above) and its distance to $\delta_0$ (with respect to $W_1$) is finite because $\Omega$ is bounded.
    \end{proof}
    The next lemma will be used in the proof of \cref{extNon} and \cref{AlgConv}.
    \begin{lem}[Properties of $\reg$ with respect to $\T$]
    \label{furtherProps}
    The regularizer $\reg$ is lower semicontinuous with respect to $\adm\cap\T$. Further, for every $c\in\R_+$ the sublevel set $L_c^-(\reg)\subset\V$ is compact relative to $\T$. In particular, if $(\mu^j)\subset L_c^-(\reg)$, then, up to a subsequence, we have $\mu^j\ws\mu$ (in $\T$) and $\mu^j\ws\mu$ a.e.
    \end{lem}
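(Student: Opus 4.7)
The plan is to reduce everything to the metric compactness statement \cref{compactness} by means of a uniform $\V$-bound on $L_c^-(\reg)$ together with an identification principle between $\T$-limits and a.e.\ weak-$*$ limits. For the uniform bound, any $\mu=\omega(\mu)\rho\in L_c^-(\reg)$ satisfies $\|\mu_t\|_\M=\omega(\mu)$ for a.e.\ $t\in I$ (since $\rho_t\in\probs(\C)$), so $\|\mu\|_{L_w^2}\leq\omega(\mu)\leq\reg(\mu)\leq c$ (with $\alpha=\beta=1$ as per \cref{standReg}). Thus $L_c^-(\reg)$ is contained in a closed ball of $\V$, which is $\T$-sequentially compact by Banach--Alaoglu since the predual $\Vstar$ is separable.

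For the identification principle, suppose $(\mu^j)\subset\adm$ has $\sup_j\reg(\mu^j)<\infty$, $\mu^j\ws\mu$ a.e.\ in $\M(\C)$ for some $\mu\in\adm$, and $\mu^j\ws\tilde\mu$ in $\T$. Given $\phi\in\Vstar$, the $\T$-convergence yields $\int_I\langle\phi_t,\mu^j_t\rangle\,\e t\to\int_I\langle\phi_t,\tilde\mu_t\rangle\,\e t$. On the other hand, the a.e.\ convergence gives $\langle\phi_t,\mu^j_t\rangle\to\langle\phi_t,\mu_t\rangle$ a.e., while the domination $|\langle\phi_t,\mu^j_t\rangle|\leq\|\phi_t\|_\infty\omega(\mu^j)\leq C\|\phi_t\|_\infty$ with $t\mapsto\|\phi_t\|_\infty\in L^2(I)\subset L^1(I)$ allows dominated convergence to yield the same limit with $\mu$ in place of $\tilde\mu$. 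Hence $\tilde\mu=\mu$ in $\V$. Lower semicontinuity follows: if $\mu^j\ws\mu$ in $\T$ and $\liminf_j\reg(\mu^j)<\infty$, I pass to a subsequence realizing the $\liminf$, apply \cref{compactness} for a further subsequence with $\mu^{j_k}\ws\hat\mu$ a.e.\ and $\reg(\hat\mu)\leq\lim_k\reg(\mu^{j_k})$, and invoke the identification principle to conclude $\hat\mu=\mu$ and thus $\reg(\mu)\leq\liminf_j\reg(\mu^j)$.

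Finally, $L_c^-(\reg)$ is $\T$-closed as the sublevel set of the $\T$-lower semicontinuous functional $\reg$ and sits inside a $\T$-sequentially compact $\V$-ball, hence is itself $\T$-sequentially compact. For the ``in particular'' statement, given $(\mu^j)\subset L_c^-(\reg)$, I first extract a subsequence via \cref{compactness} giving a.e.\ convergence to some $\mu\in\adm$, then a further subsequence via Banach--Alaoglu giving $\T$-convergence to some $\tilde\mu\in\V$, and the identification principle forces $\tilde\mu=\mu$ so that both convergences hold along the same subsequence. The main obstacle is the identification principle, which crucially relies on the uniform $\M$-norm bound $\|\mu^j_t\|_\M=\omega(\mu^j)\leq C$ stemming from $\mu^j=\omega(\mu^j)\rho^j$ with $\rho^j_t$ a probability measure.
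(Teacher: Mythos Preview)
Your proof is correct and follows essentially the same route as the paper: both arguments hinge on the identification principle (a.e.\ weak-$*$ convergence plus uniform mass bound implies $\T$-convergence to the same limit, via dominated convergence), combined with \cref{compactness} and Banach--Alaoglu. One minor organizational point: when you write ``$L_c^-(\reg)$ is $\T$-closed as the sublevel set of the $\T$-lower semicontinuous functional $\reg$'', note that you have only established lower semicontinuity of $\reg$ on $\adm\cap\T$, not of the extension $\widehat\reg$ on all of $\V$, so this step as written does not rule out a $\T$-limit escaping $\adm$; however, your ``in particular'' paragraph (which is precisely the paper's argument) closes this gap by directly identifying the $\T$-limit with the a.e.\ limit furnished by \cref{compactness}, which lies in $\adm$.
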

    \begin{proof}
    Let $(\mu^j)\subset\adm$ be an arbitrary sequence with $\mu^j\ws\mu\in\adm$. We can assume that $\liminf_j\reg(\mu^j)<\infty$ (otherwise there is nothing to show) and restrict to a subsequence with $\liminf_j\reg(\mu^j)=\lim_j\reg(\mu^j)$. By \cref{compactness} we have (if necessary, restricting to a further subsequence) $\mu^j\ws\tilde\mu$ a.e.\:for some $\tilde\mu\in\adm$ with $\reg(\tilde\mu)\leq\liminf_j\reg(\mu^j)$. Therefore, we only need to show that $\mu=\tilde\mu$. Let $\phi\in \Vstar$ be arbitrary. We have $f^j(t)=|\langle\phi_t,\tilde\mu_t-\mu^j_t\rangle|\to 0$ for a.e.\:$t\in I$. Further, we get $0\leq f^j(t)\leq f(t)=\|\phi_t\|_\infty(\omega(\tilde\mu)+\omega(\mu^j))$ for a.e.\:$t\in I$. Note that $f\in L^1(I)$ by the boundedness of $\int_I\|\phi\|_\infty\e\Leb$ (since $I$ is bounded and $\|\phi\|_{L_w^{2}}<\infty$) and $\omega(\mu^j)$ (because $\omega(\mu^j)\leq\reg(\mu^j)$). Thus, by Lebesgue's dominated convergence theorem we obtain
    \begin{equation*}
    \lim_j\langle\phi,\tilde\mu-\mu^j\rangle_{L_w^2}=\lim_j\int_I\langle\phi,\tilde\mu-\mu^j\rangle\e\Leb=\lim_j\int_If^j(t)\e\Leb=\int_I\lim_j f^j(t)\e\Leb=0,
    \end{equation*}
    which (by the uniqueness of weak-$*$ limits) implies $\tilde\mu=\mu$. This proves the lower semicontinuity of $\reg$ with respect to $\adm\cap\T$. Now let $(\nu^j)\subset L_c^-(\reg)$. Since then $\reg(\nu^j)$ is bounded by $c$, the weights $\omega(\nu^j)$ are also bounded. Using $\|\nu^j\|_{L_w^{2}}=\omega(\nu^j)$ and the Banach\textendash Alaoglu theorem, we obtain $\nu^j\ws\nu\in \V$ up to a subsequence. By the same argument as above we actually have (if necessary, restricting to a further subsequence) $\nu^j\ws\tilde\nu$ a.e.\:for some $\tilde\nu\in\adm$ with $\reg(\tilde\nu)\leq\liminf_j\reg(\nu^j)\leq c$ and $\nu=\tilde\nu\in\adm$, which finalizes the proof.
    \end{proof}
    \begin{examp}[Convergence in $L_c^-(\reg)$]
    \label{convExamp}
    In the proof of \cref{furtherProps}, we have seen that $(\mu^j)\subset L_c^-(\reg)$ with $\mu^j\ws\mu\in L_c^-(\reg)$ (with respect to $\T$) implies $\mu^j\ws\mu$ a.e.\:up to a subsequence. This is, in general, not true (in $L_c^-(\reg)$) for the whole sequence as the following example illustrates: For $i$ and $j\in\{ 0,1,\ldots,2^{i-1}-1 \}$ define indices and (spatial and time) intervals by
    \begin{equation*}
        k=k(i,j)=2^{i-1}+j\qquad\textup{and}\qquad \C_k=I_k=\begin{cases*}
            2^{1-i}[j,j+1)&\textup{if} $j\neq2^{i-1}-1 $,\\
            2^{1-i}[j,j+1]&\textup{if} $j=2^{i-1}-1 $.
        \end{cases*}
    \end{equation*}
    For each $i$ the intervals $\Omega_k$ partition $\Omega=[0,1]$ and $\Leb(\Omega_k)=2^{1-i}$. Now define (see \cref{FigBV})
    \begin{equation*}
        \mu_t^k=\begin{cases*}
            2^{i-1}\Leb\mres\Omega_k&\textup{if} $t\in I_k$,\\
            \delta_0&\textup{if} $t\in I\setminus I_k$.
        \end{cases*}
    \end{equation*}
    We get $\omega(\mu^k)=1$ and $\essvar(\mu^k)\leq 2W_1(\delta_0,2^{i-1}\Leb\mres\Omega_k)\leq 2$, thus $\reg(\mu^k)\leq 3$. In particular, we obtain $\mu^k\in L_3^-(\reg)$. Next, we show that $\mu^k\ws\mu$ in $\V$. Let $\phi:I\to (C(\Omega),\|\cdot\|_\infty)$ be continuous. Then
    \begin{equation}
        \label{innerprod}
        \langle\phi,\mu^k\rangle_{L_w^2}=2^{i-1}\int_{I_k}\int_{\C_k}\phi_t(x)\e\Leb(x)\e\Leb(t)+\int_{I\setminus I_k}\phi_t(0)\e\Leb(t).
    \end{equation}
    We have 
    \begin{equation*}
        2^{i-1}\int_{I_k}\int_{\C_k}\phi_t(x)\e\Leb(x)\e\Leb(t)\leq 2^{i-1}\max_{t\in I}\|\phi_t\|_\infty\int_{I_k}\int_{\C_k}1\e\Leb(x)\e\Leb(t)=2^{1-i}\max_{t\in I}\|\phi_t\|_\infty\to 0
    \end{equation*}
    for $i\to\infty$. Further, we can estimate
    \begin{equation*}
        \left|\int_{I\setminus I_k}\phi_t(0)\e\Leb(t)-\int_{I}\phi_t(0)\e\Leb(t)\right|\leq\int_{I_k}|\phi_t(0)|\e\Leb(t)\leq 2^{1-i}\max_{t\in I}\|\phi_t\|_\infty\to 0.
    \end{equation*}
    This yields that the right-hand side in \cref{innerprod} converges to
    \begin{equation*}
        \langle\phi,\mu\equiv\delta_0\rangle_{L_w^2}=\int_{I}\phi_t(0)\e\Leb(t).
    \end{equation*}
    By density of continuous function in the Bochner space $\Vstar$, we have $\mu^k\ws\mu$ in $\V$\footnote{If $\varepsilon>0$, $\phi\in\Vstar$, and $\phi_\varepsilon\in\Vstar$ continuous with $\|\phi-\phi_\varepsilon\|_{L_w^2}<\varepsilon$, then $\langle\phi,\mu^k\rangle_{L_w^2}=\langle\phi-\phi_\varepsilon,\mu^k\rangle_{L_w^2}+\langle\phi_\varepsilon,\mu^k\rangle_{L_w^2}$ and $|\langle\phi-\phi_\varepsilon,\mu^k\rangle_{L_w^2}|\leq\varepsilon\|\mu^k\|_{L_w^2}=\varepsilon$. Thus, we get $\langle\phi,\mu^k\rangle_{L_w^2}\to \langle\phi,\mu\rangle_{L_w^2}$ for $k\to\infty$.}. On the other hand, for all $t\in I$ and $\psi\in C(\Omega)$ we get
    \begin{equation*}
        \langle\psi,\mu_t^k\rangle=\begin{cases*}
            2^{i-1}\int_{\C_k}\psi\e\Leb&\textup{if} $t\in I_k$,\\
            \psi(0)&\textup{if} $t\in I\setminus I_k$.
        \end{cases*}
    \end{equation*}
    The right-hand side does, in general, not converge to $\langle\psi,\mu_t\rangle=\psi(0)$ for a.e.\:$t\in I$. For example, we can choose $\psi(x)=\min(2x,1)$ which yields $2^{i-1}\int_{\C_k}\psi\e\Leb=1$ for countably many $k$ and $\psi(0)=0$. Hence, the convergence $\mu^k\to\mu$ a.e.\:is only valid up to a subsequence (e.g.\:subsequence $\mu^{k_\ell}$ with $k_\ell=k(\ell,0)$).
    \end{examp}
    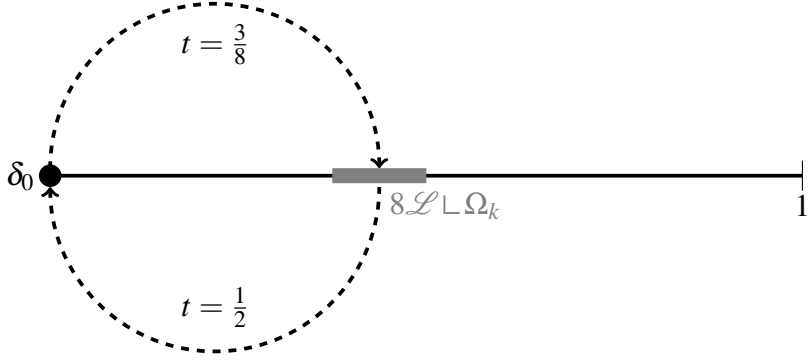
\begin{figure}[t]
    \centering
        \begin{tikzpicture}[scale=10]
        \draw[line width=0.5mm] (0,0) -- (1,0);
        \draw (1,0.02) -- (1,-0.02);
        \draw[gray, line width=2mm] (0.375,0) -- (0.5,0);
        \draw node[fill,circle,minimum size=3mm,inner sep=0pt] {};
        \node at (1,-0.04) {$1$};
        \node at (-0.04,0) {$\delta_0$};
        \node[gray] at (0.525,-0.04) {$8\Leb\mres\Omega_k$};

         \draw[<-,dashed,line width=0.5mm] (0.4375,0.01) arc (0:180:0.21875);
         \draw[<-,dashed,line width=0.5mm] (0,-0.015) arc (180:360:0.21875);

        \node at (0.22,0.175) {$t=\frac{3}{8}$};
         \node at (0.22,-0.175) {$t=\frac{1}{2}$};
        \end{tikzpicture}  
    \caption{Sketch of the (piecewise constant) BV curve $\mu^k$ with $k=k(4,3)$ (\cref{convExamp}).}
    \label{FigBV}
    \end{figure}
    \begin{prop}[$\ext(\sol)\neq\emptyset$]
        \label{extNon}
		Let \cref{linearf} be satisfied (the finiteness of $Y$ is not needed). Then the set $\ext(\mathcal{S})$ is nonempty.
	\end{prop}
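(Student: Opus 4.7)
The plan is to verify that $\mathcal{S}$ is a non-empty, convex, $\mathcal{T}_*$-compact subset of the Hausdorff locally convex topological vector space $(\V,\mathcal{T}_*)$ and then invoke the Krein\textendash Milman theorem.

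First, note that $\mathcal{S}\neq\emptyset$ by \cref{existence}. The convexity of $\mathcal{S}$ follows from \cref{linearf}: the forward operator $\for$ is linear, the fidelity $\fid$ is convex, and $\reg(\mu)=\omega(\mu)+\essvar(\mu)$ is convex on $\adm$ (linearity of $\omega$ combined with the standard convexity of essential variation along convex combinations of BV curves, which one can read off from its definition via infimum of supremum of sums of distances). Therefore the objective $\mu\mapsto\fid(\for(\mu))+\reg(\mu)$ is convex, and its minimum set $\mathcal{S}$ is convex. Next, let $m=\min_{\mu\in\adm}\fid(\for(\mu))+\reg(\mu)\in\R$ and $c=m-\inf_Y\fid\in\R_+$. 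Since $\fid\geq\inf_Y\fid>-\infty$, every $\mu\in\mathcal{S}$ satisfies $\reg(\mu)\leq c$, so that $\mathcal{S}\subset L_c^-(\reg)$.

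By \cref{furtherProps}, the sublevel set $L_c^-(\reg)$ is $\mathcal{T}_*$-compact. To upgrade this to compactness of $\mathcal{S}$ it suffices to show that $\mathcal{S}$ is $\mathcal{T}_*$-closed inside $L_c^-(\reg)$. Since the predual $\Vstar$ is separable, the bounded set $L_c^-(\reg)$ (note that $\|\mu\|_{L_w^2}=\omega(\mu)\leq c$ on $L_c^-(\reg)$) is metrizable in the weak-$*$ topology $\mathcal{T}_*$, so it is enough to prove sequential closure. Let $(\mu^j)\subset\mathcal{S}$ with $\mu^j\ws\mu$ in $\mathcal{T}_*$. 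By \cref{furtherProps}, up to extraction we additionally have $\mu^j\ws\mu$ a.e., and $\mu\in\adm$. The continuity assumption in \cref{BVtp} then yields $\for(\mu^j)\rightharpoonup\for(\mu)$ in $Y$, and the weak lower semicontinuity of $\fid$ combined with the lower semicontinuity of $\reg$ (\cref{furtherProps}) gives
\begin{equation*}
\fid(\for(\mu))+\reg(\mu)\leq\liminf_j\bigl(\fid(\for(\mu^j))+\reg(\mu^j)\bigr)=m,
\end{equation*}
so $\mu\in\mathcal{S}$. Hence $\mathcal{S}$ is $\mathcal{T}_*$-closed in $L_c^-(\reg)$, and therefore $\mathcal{T}_*$-compact.

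Finally, $(\V,\mathcal{T}_*)$ is a Hausdorff locally convex topological vector space as the weak-$*$ topology on the continuous dual of the Banach space $\Vstar$. The Krein\textendash Milman theorem thus applies to the non-empty, convex, $\mathcal{T}_*$-compact set $\mathcal{S}$ and gives $\ext(\mathcal{S})\neq\emptyset$. The main subtlety in the argument is the $\mathcal{T}_*$-closedness of $\mathcal{S}$: the weak-$*$ topology on $\V$ is not metrizable in general, and one has to exploit the separability of $\Vstar$ to metrize $L_c^-(\reg)$ and then pass (only along a subsequence) from $\mathcal{T}_*$-convergence to almost everywhere weak-$*$ convergence, which is the convergence mode in which the continuity of $\for$ is formulated.
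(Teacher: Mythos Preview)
Your proof is correct and follows essentially the same approach as the paper: show that $\mathcal{S}$ is non-empty, convex, and $\mathcal{T}_*$-compact in $\V$, then apply Krein\textendash Milman. Your version is in fact a bit more careful than the paper's, which passes directly from sequential compactness to compactness; you make this step explicit by invoking the separability of $\Vstar$ (stated in \cref{BasNot}) to metrize $L_c^-(\reg)$ in $\mathcal{T}_*$, so that sequential arguments suffice.
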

    \begin{proof}
        We apply the Krein\textendash Milman theorem. By \cref{structAdm} and $\sol\subset\adm$ we have $\sol\subset \V$. The convexity of $\sol$ follows directly from the convexity of $\adm$ and \cref{linearf}. The compactness of $\sol$ (with respect to $\T$) follows from \cref{furtherProps}: If $(\mu^i)\subset\sol$, then $(\mu^i)\subset L_c^-(\reg)$ for some $c\in\R_+$ by optimality and $\reg\geq0$. Thus, using \cref{furtherProps} and the properties of $\fid,K$, we obtain $\mu^i\ws\mu\in\adm$ and $\mu^i\ws\mu$ a.e.\:up to a subsequence and (by the proof of \cref{existence}) $\mu$ is also optimal, that is $\mu\in\sol$. The Krein\textendash Milman\footnote{The weak-$*$ topology $\T$ is a Hausdorff locally convex vector topology \cite[p.\:68]{Rud2}. Note that the definition of 'vector topology' in \cite[Section 1.6]{Rud2} includes the Hausdorff property (cf.\:\cite[Thm.\:1.12]{Rud2}).} theorem yields that the nonempty, convex, and compact set $\sol\subset \V$ is equal to the convex hull of $\ext(\mathcal{S})$ which, in particular, implies $\ext(\mathcal{S})\neq\emptyset$.
    \end{proof} 
    Next, we show some further properties of the sublevel sets $L_c^-(\reg)$ which we will need in the proof of \cref{representer}.
	\begin{lem}[Properties of $L_c^-(\reg)$]
		\label{propertiesS}
		Let $c\in\R_+$. Then $L_c^-(\reg)$ is linearly closed, does not contain any rays, and its lineality space is trivial, that is $\lin(L_c^-(\reg))=\{ 0 \}$.
	\end{lem}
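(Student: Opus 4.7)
The plan is to combine the compactness of $L_c^-(\reg)$ in the weak-$*$ topology $\T$ already established in \cref{furtherProps} with one elementary identity that ties $\reg$ to the $L_w^2$-norm on $\adm$. Specifically, for any $\nu=\omega(\nu)\rho\in\adm$ with $\rho\in BV(I;\wass)$, each $\rho_t$ is a probability measure for a.e.\:$t\in I$, hence $\|\nu_t\|_\M=\omega(\nu)$ for a.e.\:$t$, and since $I=[0,1]$,
\begin{equation*}
\|\nu\|_{L_w^2}=\omega(\nu)\leq\reg(\nu).
\end{equation*}
In particular $L_c^-(\reg)$ is norm-bounded by $c$ in $\V$. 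This is the only `new' ingredient the proof needs.

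For linear closedness, I would note that $L_c^-(\reg)$ is compact in the Hausdorff topology $\T$ by \cref{furtherProps} and therefore closed in $\T$. For any one-dimensional affine line $L=\{\mu+t\nu\mid t\in\R\}$ in $\V$, the parametrization $t\mapsto\mu+t\nu$ is continuous from $\R$ into $(\V,\|\cdot\|_{L_w^2})$, hence continuous into $(\V,\T)$. Consequently $\{t\in\R\mid\mu+t\nu\in L_c^-(\reg)\}$ is the preimage of a $\T$-closed set under a continuous map, so it is closed in $\R$, which is exactly the statement that $L\cap L_c^-(\reg)$ is closed in $L$.

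For the remaining two claims I would argue by contradiction using the bound above. If $\{\mu+tw\mid t>0\}\subset L_c^-(\reg)$ were a ray with $w\neq 0$, then by the reverse triangle inequality
\begin{equation*}
t\|w\|_{L_w^2}-\|\mu\|_{L_w^2}\leq\|\mu+tw\|_{L_w^2}\leq c
\end{equation*}
for all $t>0$, which is incompatible with $\|w\|_{L_w^2}>0$. The triviality of the lineality space then piggybacks on this: since $0\in L_c^-(\reg)$, any nonzero $w\in\rec(L_c^-(\reg))$ would produce the ray $\{tw\mid t>0\}\subset L_c^-(\reg)$, contradicting the previous step, hence $\rec(L_c^-(\reg))=\{0\}$ and a fortiori $\lin(L_c^-(\reg))=\{0\}$.

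I do not anticipate any serious obstacle. The only small technical point worth flagging is that rays and the recession cone are defined in the ambient vector space $\V$, but the inclusion $L_c^-(\reg)+\R_+^*w\subset L_c^-(\reg)\subset\adm$ keeps every element of a putative ray automatically admissible, so the norm identity $\|\cdot\|_{L_w^2}=\omega(\cdot)$ (together with $\omega\leq\reg$) applies uniformly along the ray and immediately forces the desired contradiction.
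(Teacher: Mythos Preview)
Your proposal is correct and essentially mirrors the paper's proof. The ``no rays'' and ``trivial lineality'' arguments are identical in spirit (the paper also uses $\omega(\mu^k)=\|\mu^k\|_{L_w^2}\leq c$ and a triangle-inequality-type estimate to force a contradiction, and then observes that a nontrivial recession cone would produce a ray). For linear closedness the paper argues more directly via sequences and \cref{compactness}, whereas you route through the $\T$-compactness of \cref{furtherProps} and pull back along the continuous parametrization of the line; both are short and valid, and your version is arguably cleaner.
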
	
	\begin{proof}
        Write $S=L_c^-(\reg)$.
        
		\underline{$S$ linearly closed:} Consider a sequence $\mu^i = \mu^0 + \lambda^i \nu \in S$ where $\lambda^i \in \R$ with $\lambda^i\to\lambda\in\R$ and $\mu^0,\nu\in \V,\nu\neq 0$. We prove $\mu = \mu^0+ \lambda \nu \in S$. Since $\sup_i\reg(\mu^i)\leq c$, \cref{compactness} implies the existence of $\hat{\mu}\in \adm$ with $\mu^i\ws\hat\mu$ a.e.\:(up to a subsequence) and $\reg(\hat{\mu})\leq\liminf_i\reg(\mu^i)$, thus $\hat{\mu}\in S$. Clearly, by definition of $\mu^i$, we also have $\mu^i\ws \mu$ a.e. Therefore, we obtain $\mu=\hat{\mu}\in S$.
        
		\underline{$S$ does not contain any rays:} By contradiction, assume that $S$ contains a ray $R=\mu+\R_+^*\nu$, where $\mu,\nu\in \V,\nu\neq 0$. Define $\mu^k=\mu+k\nu\in R$ for $k\in\mathbb{N}$. Since $\mu^k\in S$, we have $c\geq\reg(\mu^k)=\omega(\mu^k)+\essvar(\mu^k)\geq 0$. Therefore, we get 
        \begin{multline*}       c\geq\omega(\mu^k)=\|\mu^k\|_{L_w^2}=\sup_\phi\langle\phi,\mu^k\rangle_{L_w^2}=\sup_\phi\langle\phi,\mu+k\nu\rangle_{L_w^2}=\sup_\phi\langle\phi,\mu\rangle_{L_w^2}+k\langle\phi,\nu\rangle_{L_w^2}\\
        \geq\sup_\phi-\|\mu\|_{L_w^2}+k\langle\phi,\nu\rangle_{L_w^2}=-\|\mu\|_{L_w^2}+k\|\nu\|_{L_w^2},
        \end{multline*}
        where the supremum is over $\phi\in \Vstar$ with $\|\phi\|_{L_w^2}\leq 1$. Letting $k\to\infty$ yields the contradiction.
        
		\underline{$\lin(S)=\{ 0 \}$:} Note that always $0\in\lin(S)$ by definition. Clearly, the lineality space $\lin(S)$ cannot be nontrivial, since then $\rec(S)$ would be nontrivial and thus $S$ would contain a ray. This was already excluded.
	\end{proof}
	\begin{proof}[Proof of \cref{representer}]
		We apply \cite[Thm.\:1 \&\:Cor.\:2]{BCCDGW19}. Write \cref{BVtp} as 
		\begin{equation*}
		\min_{\mu\in \adm}\fid(\for(\mu))+\reg(\mu)=\min_{\mu\in \V}\fid(\widehat\for(\mu))+\widehat{\reg}(\mu),
		\end{equation*}
        where $\widehat\for$ denotes any linear extension of $K$ to $\V$ (\cref{linearf}) and $\widehat{\mathcal{R}}:\V\to\R\cup\{\infty\}$ is defined by
    \begin{equation*}
    \widehat{\mathcal{R}}(\mu) =\begin{cases*}
		\mathcal{R}(\mu) &if $\mu\in\mathcal{A}$,\\
		\infty&else.
		\end{cases*}
    \end{equation*}
        This problem is convex (\cref{linearf} and \cref{structAdm}). Now let $\mu^{opt}\in\ext(\sol)$ (\cref{extNon}). The assumptions in \cite[Cor.\:2]{BCCDGW19} are satisfied: $\fid$ is convex by \cref{linearf}, the solution set $\sol$ is nonempty by \cref{existence}, and the sublevel set $S=L_c^-(\widehat\reg)=L_c^-(\reg)$ with $c=\reg(\mu^{opt})$ is linearly closed by \cref{propertiesS}. Further, forward operator $\widehat\for$ is linear on $\V$ (cf.\:beginning of \cite[Section 3.1]{BCCDGW19}), regularizer $\widehat{\reg}$ is convex (see setting in \cite[Thm.\:1]{BCCDGW19}) because $\reg$ is ($\adm$ is convex by \cref{structAdm}), and the dimension of the smallest face of $\sol$ at $\mu^{opt}$ is equal to zero since $\mu^{opt}\in\ext(\sol)$. If $\reg(\mu^{opt})>0$, then \cite[Thm.\:1]{BCCDGW19} can be applied by \cite[Cor.\:2]{BCCDGW19} if $\textup{lin}(S)=\{ 0 \}$, which is true by \cref{propertiesS}. Hence, in this case we get that $\mu^{opt}$ is a convex combination of at most $m$ points in $\ext(S)$ or $m-1$ points of which each is in $\ext(S)$ or in an extremal ray of $S$ (which is a ray $R$ whose intersection with open line segments $(\mu^a,\mu^b)$ equals $(\mu^a,\mu^b)$). Since $S$ does not contain any rays (\cref{propertiesS}), we remain in the previous case (because the convex combination is not necessarily strict). If $\reg(\mu^{opt})=0$, then $\mu^{opt}=0$ which is a convex combination of itself. Now we can apply \cref{extremalpointschar},
        \begin{equation*}
            \ext(S)=\ext\left(L_{\reg(\mu^{opt})}^-(\reg)\right)=\{ \mu\in\adm\:|\:\reg(\mu)\in\{ 0,\reg(\mu^{opt}) \},\:\bar{\mu}_t=\omega(\mu)\delta_{x_t}\textup{ for all }t\in[0,1) \}.
        \end{equation*}
        Hence, if $\mu^{opt}=\sum_{i=1}^N\tilde\lambda^i\mu^i$ with $N\leq m$, $\tilde\lambda^i\in[0,1]$, $\sum_{i=1}^N\tilde\lambda^i=1$, and $\mu^i\in\ext(S)$, then $\mu^{opt}_t=\sum_{i=1}^N\lambda^i\delta_{x_t^i}$ for a.e.\:$t\in I$ with $\lambda^i=\tilde\lambda^i\omega(\mu^i)\in\R_+$.
	\end{proof}
    \subsection{Discussion: extension to $\probs(\C)\times\ldots\times\probs(\C)$}
    \label{Discussion}
    One may consider a natural generalization of \cref{BVtp} in which $\adm=\R_+BV(I;\wass)$ is replaced by $\R_+BV(I;\mathcal{W}_H(\C))$, where $\mathcal{W}_H(\C)=(\probs(\Omega)^M,W_H)$ for some metric $W_H$, defined via a norm $H:\R^{M\times n}\to\R_+$, which generalizes $W_1$ (with a slight abuse of notation, we write $W_H=W_1$ if $M=1$ and $H=|\cdot|$); metric $W_H$ will be specified below. In this context, one may interpret $M$ as the number of different labels of moving particles. 
    However, a \textit{natural} superposition principle in the spirit of \cite[Thm.\:3.3]{ALS24} (which was used to characterize the extremal points of $L_c^-(\reg)$ in \cref{extremalpointschar}) does not exist for this setting. In this section, we aim to illustrate this aspect. As a motivation, let us recall the Beckmann formulation of the Wasserstein $1$-distance \cite[Thm.\:4.6]{San} between $\rho^+,\rho^-\in\probs(\C)$,
    \begin{equation*}
    W_1(\rho^+,\rho^-)=\min_T\|T\|_\M,  
    \end{equation*}
    where the minimum is over vector-valued Radon measures $T:\B(\C)\to\R^n$ with distributional divergence equal to $\rho^+-\rho^-$ (equivalently, normal $1$-currents in $\Omega$ whose boundary is equal to $\rho^--\rho^+$). Now let $M>1$ and $\vec\rho^+=(\rho^+_1,\ldots,\rho^+_M)^\intercal,\vec\rho^-=(\rho^-_1,\ldots,\rho^-_M)^\intercal\in\probs(\C)^M$. Fix a norm $H:\R^{M\times n}\to\R_+$. Define
    \begin{equation*}
    W_H(\vec\rho^+,\vec\rho^-)=\min_{ \vec T}\| \vec T\|_H,  
    \end{equation*}
    where the minimum is over matrix-valued Radon measures $\vec T:\B(\C)\to\R^{M\times n}$ with row-wise distributional divergence equal to $\vec\rho^+-\vec\rho^-$ (equivalently, normal $1$-currents in $\Omega$ with coefficients in $\R^M$ whose component-wise boundary is equal to $\vec\rho^--\vec\rho^+$), and $\|\cdot\|_H$ denotes the total variation with respect to $H$, that is $\|\vec T\|_H=\sup \{ H(\vec T(B_1))+H(\vec T(B_2))+\ldots\:|\:B_1,B_2,\ldots\in\B(\C)\textup{ partition of }\C \}$. If $H(\theta\otimes\vec e)=h(\theta)$ for all $\theta\in\R^M$, $\vec e\in\R^n$ with $|\vec e|=1$ for some norm $h:\R^M\to\R_+$, then by \cite[Thm.\:1.10]{LSW322} $W_H(\vec\rho^+,\vec\rho^-)$ can be interpreted as a multi-material transport problem \cite{MMT17,MMSR16} ($h(\theta)$ denotes the cost to transport material vector $\theta$ per unit distance), cf.\:\cite[Ch.\:4.2]{OM23}. Next, we identify an admissible $\vec T$ in $W_H(\vec\rho^+,\vec\rho^-)$ with a \cad\:curve $\mu:I\to\mathcal{W}_H(\C)$. For simplicity, we assume that the divergence-free part of $\vec T$ is equal to zero. We follow the proof of \cite[Prop.\:4.1]{BW}. By \cite[Thm\:C]{S} there exists an $M$-tuple $\vec\eta=(\eta_1,\ldots,\eta_M)^\intercal$ of measures $\eta_i:(\Theta,\B(\Theta))\to[0,\infty)$ ($\Theta$ denoting a space of Lipschitz curves which are identified modulo parameterization with topology induced by a metric $d_\Theta$, cf.\:\cite[Def.\:2.5]{BPSS}) such that 
    \begin{equation*}
    \int_\Omega\Phi:\e\vec T=\int_\Theta\int_I\Phi(\gamma)\dot\gamma\e\Leb\cdot\e\vec\eta(\gamma)
    \end{equation*}
    for all $\Phi\in C(\C;\R^{M\times n})$, where $:$ denotes the Frobenius inner product. By Skorohod’s theorem \cite[Thm.\:6.7]{B} $\vec\eta$ is induced by an $M$-tuple $\vec\chi=(\chi_1,\ldots,\chi_M)^\intercal$ of so-called irrigation patterns $\chi_i:[0,1]\times I\to\C$ between $\rho_i^+$ and $\rho_i^-$ \cite{MSM03,BCM05,MS13}, that is $\chi_i$ is Borel measurable, $\chi_i(p,\cdot)$ is absolutely continuous for $\Leb$-a.e.\:$p\in[0,1]$ ($\chi_i(p,t)$ represents the position of particle $p$ in the reference space $([0,1],\B([0,1]),\Leb\mres[0,1])$ at time $t\in I$), and $\chi_i(\cdot,0)_\#(\Leb\mres[0,1])=\rho_i^+,\chi_i(\cdot,1)_\#(\Leb\mres[0,1])=\rho_i^-$ for all $i=1,\ldots,M$. Now define $\mu:I\to\mathcal{W}_H(\C)$ (note that $\vec\chi(\cdot,t):[0,1]\to\Omega^M$ is Borel measurable) by 
    \begin{equation*}
        \mu_t=\vec\chi(\cdot,t)_\#(\Leb\mres[0,1])
    \end{equation*}
    for every $t\in I$. Then $\mu$ is a \cad\:curve. Indeed, for every $t\in I$, $\varepsilon\in\R\setminus\{ 0\}$ with $t+\varepsilon\in I$, and $\phi\in C(\Omega;\R^M)$ we have
    \begin{equation*}
    \left|\int_\Omega\phi\cdot\e(\mu_{t}-\mu_{t+\varepsilon})\right|\leq\sum_{i=1}^M\int_{[0,1]}|\phi_i(\chi(p,t))-\phi_i(\chi(p,t+\varepsilon))|\e\Leb(p)\to0
    \end{equation*}
    for $\varepsilon\to 0$ by the regularity of $\phi$ and continuity of $t\mapsto\chi(p,t)$ for $\Leb$-a.e.\:$p\in [0,1]$ (Lebesgue's dominated convergence theorem). Thus, we have $\mu_{t+\varepsilon}\ws\mu_{t}$ for $\varepsilon\to 0$, hence $W_H(\mu_{t+\varepsilon},\mu_t)\to0$ for $\varepsilon\to 0$ since $W_H$ metrizes weak-$*$ convergence by norm equivalence. This shows that $\mu:I\to\mathcal{W}_H(\C)$ is \cad. Nevertheless, a natural superposition principle according to \cite[Thm.\:3.3]{ALS24} does not exist which is demonstrated in the following example.
    \begin{examp}[Decompositions of $|\textup{D}\mu|$ for $\mu:I\to\mathcal{W}_H$]
    \label{SuperExamp}
    Let $M=n=2$, $\alpha=\frac{3\pi}{4}$, and $x=\bigl(\begin{smallmatrix}\cos\alpha\\\sin\alpha\end{smallmatrix}\bigl),y=\bigl(\begin{smallmatrix}\cos\alpha\\-\sin\alpha\end{smallmatrix}\bigl),z=\bigl(\begin{smallmatrix}1\\0\end{smallmatrix}\bigl)$. Further assume that $H:\R^{2\times 2}\to\R_+$ satisfies $H(\theta\otimes\vec e)=h(\theta)$ for all $\theta\in\R^2$, $\vec e\in\R^2$ with $|\vec e|=1$ for some norm $h:\R^2\to\R_+$. Define Lipschitz curves $\gamma_1,\gamma_2:I\to\Omega=[-1,1]^2$ (cf.\:\cref{sf:GammaCurves}) by
    \begin{equation*}
        \gamma_1(t)=\begin{cases*}
            (1-2t)x & if $t\in[0,\frac{1}{2})$,\\
            (2t-1)z & else,
        \end{cases*}
        \qquad
        \textup{and}
        \qquad
        \gamma_2(t)=\begin{cases*}
            (1-2t)y & if $t\in[0,\frac{1}{2})$,\\
            (2t-1)z & else.
        \end{cases*}
    \end{equation*}
    Further, let $\mu:I\to\mathcal{W}_H$ be the continuous curve given by 
    \begin{equation*}
        \mu_t=\begin{pmatrix}
            \delta_{\gamma_1(t)}\\
            \delta_{\gamma_2(t)}
        \end{pmatrix}
        \qquad
        \textup{associated with}
        \qquad
        \vec T=\frac{1}{2}\begin{pmatrix}
            \dot\gamma_1^\intercal\mathcal{H}^1\mres(\gamma_1(I))\\
            \dot\gamma_2^\intercal\mathcal{H}^1\mres(\gamma_2(I))
        \end{pmatrix}
    \end{equation*}
    for $t\in I$, where $\mathcal{H}^1$ denotes the one-dimensional Hausdorff measure. Now we may define  
    \begin{equation*}
        \zeta=h(1,0)\delta_{\gamma_1}+h(0,1)\delta_{\gamma_2}\in\R_+\probs(\D_E(\C)).
    \end{equation*}
    Then we have
    \begin{multline*}
        |\textup{D}\mu|((0,1/2))=\textup{var}(\mu,(0,1/2))=h(1,0)\var(\gamma_1,(0,1/2))+h(0,1)\var(\gamma_2,(0,1/2))\\
        =\int_{\D_E(\C)}|\textup{D}\gamma|((0,1/2))\e\zeta(\gamma).
    \end{multline*}
    However, we get
    \begin{multline*}
        |\textup{D}\mu|((1/2,1))=h(1,1)\var(\gamma_1,(1/2,1))\neq \int_{\D_E(\C)}|\textup{D}\gamma|((1/2,1))\e\zeta(\gamma)\quad\textup{if and only if}\\
        h(1,1)<h(1,0)+h(0,1).
    \end{multline*}
    This issue may be solved by considering (see \cref{sf:TildeGammaCurves})
    \begin{equation*}
        \tilde\gamma_1(t)=\begin{cases*}
            \gamma_1(t) & if $t\in[0,\frac{1}{2})$,\\
            \gamma_1(\frac{1}{2}) & else,
        \end{cases*}
        \quad
        \tilde\gamma_2(t)=\begin{cases*}
            \gamma_2(t) & if $t\in[0,\frac{1}{2})$,\\
            \gamma_2(\frac{1}{2}) & else,
        \end{cases*}
        \quad
        \textup{and}
        \quad
        \tilde\gamma_3(t)=\begin{cases*}
           \gamma_1(\frac{1}{2}) & if $t\in[0,\frac{1}{2})$,\\
            \gamma_1(t) & else.
        \end{cases*}
    \end{equation*}
    Define $\tilde\zeta = h(1,0)\delta_{\tilde\gamma_1}+h(0,1)\delta_{\tilde\gamma_2}+h(1,1)\delta_{\tilde\gamma_3}$. Then $|\textup{D}\mu|=\int_{\D_E(\C)}|\textup{D}\tilde\gamma|\e\tilde\zeta(\tilde\gamma)$ but $\tilde\zeta$ is not normalized: $\tilde\zeta\notin\probs(\D_E(\C))$. Further, measure $\tilde\zeta$ is not in a natural sense defined via $\mu$. Another approach would be to consider $\vec\zeta\in\probs(\D_E(\C))^2$ given by
    \begin{equation*}
        \vec\zeta=\begin{pmatrix}
            \delta_{\gamma_1}\\
            \delta_{\gamma_2}
        \end{pmatrix}.
    \end{equation*}
    Then we have
    \begin{multline*}
        \int_{\D_E(\C)}h\left( \frac{\e\vec\zeta}{\e|\vec\zeta|}(\gamma)\right)|\textup{D}\gamma|\e|\vec\zeta|(\gamma)=h(1,0)|\textup{D}\gamma_1|+h(0,1)|\textup{D}\gamma_2|\neq |\textup{D}\mu|\quad\textup{if and only if}\\
        h(1,1)<h(1,0)+h(0,1).
    \end{multline*}
    Finally, we can resolve this by defining $\vec\xi\in\probs(\D_E(\C))^2$ by
    \begin{equation*}
        \vec\xi=\frac{1}{2}\begin{pmatrix}
            \delta_{\tilde\gamma_1}+\delta_{\tilde\gamma_3}\\
            \delta_{\tilde\gamma_2}+\delta_{\tilde\gamma_3}
        \end{pmatrix}.
    \end{equation*}
    This yields 
    \begin{equation*}
        \int_{\D_E(\C)}h\left( \frac{\e\vec\xi}{\e|\vec\xi|}(\gamma)\right)|\textup{D}\gamma|\e|\vec\xi|(\gamma)=h(1,0)|\textup{D}\tilde\gamma_1|+h(0,1)|\textup{D}\tilde\gamma_2|+h(1,1)|\textup{D}\tilde\gamma_2|= |\textup{D}\mu|
    \end{equation*}
    as desired. Measure $\vec\xi$ is still not naturally defined via $\mu$. Nevertheless, it is normalized in the sense that $\vec\xi\in\probs(\Omega)^2$. We believe that the formula (with $\vec\xi$ defined via $\mu$)
    \begin{equation*}
        |\textup{D}\mu|=\int_{\D_E(\C)}h\left( \frac{\e\vec\xi}{\e|\vec\xi|}(\gamma)\right)|\textup{D}\gamma|\e|\vec\xi|(\gamma)
    \end{equation*}
    holds in a general setting.
    \end{examp}
	\begin{figure}[t]
		\centering
		\begin{subfigure}[b]{0.45\textwidth}
			\centering
            \begin{tikzpicture}[scale=3]
            \draw[dashed] (1,0) arc (0:360:1);
            \draw[line width=1mm,lightgray] ({cos(135)},{sin(135)}) -- (0,0);
            \draw[line width=1mm,gray] ({cos(135)},{-sin(135)}) -- (0,0);
            \draw[line width=1mm,lightgray] (1,0.01) -- (0,0.01);
            \draw[line width=1mm,gray] (1,-0.01) -- (0,-0.01);
            \draw[->,line width=1mm,lightgray] ({cos(135)},{sin(135)}) -- ({2*cos(135)/3},{2*sin(135)/3});
            \draw[->,line width=1mm,gray] ({cos(135)},{-sin(135)}) -- ({2*cos(135)/3},{-2*sin(135)/3});

            \node[fill,circle,minimum size=3mm,inner sep=0pt,label=left:{$x$}] at ({cos(135)},{sin(135)}) {};
            \node[fill,circle,minimum size=3mm,inner sep=0pt,label=left:{$y$}] at ({cos(135)},{-sin(135)}) {};
            \node[fill,circle,minimum size=3mm,inner sep=0pt,label=right:{$z$}] at (1,0) {};
            
            \node[label=right:{$\textcolor{lightgray}{\gamma_1}$}] at ({cos(135)/2},{sin(135)/2}) {};
            \node[label=right:{$\textcolor{gray}{\gamma_2}$}] at ({cos(135)/2},{-sin(135)/2}) {};
            \end{tikzpicture}  
			\caption{$z=\gamma_1(1)=\gamma_2(1)$.}
			\label{sf:GammaCurves}
		\end{subfigure}
        \begin{subfigure}[b]{0.45\textwidth}
			\centering			
           \begin{tikzpicture}[scale=3]
            \draw[dashed] (1,0) arc (0:360:1);
            \draw[line width=1mm,lightgray] ({cos(135)},{sin(135)}) -- (0,0);
            \draw[line width=1mm,gray] ({cos(135)},{-sin(135)}) -- (0,0);
            \draw[line width=1mm] (1,0) -- (0,0);
            \draw[->,line width=1mm,lightgray] ({cos(135)},{sin(135)}) -- ({2*cos(135)/3},{2*sin(135)/3});
            \draw[->,line width=1mm,gray] ({cos(135)},{-sin(135)}) -- ({2*cos(135)/3},{-2*sin(135)/3});
            \draw[->,line width=1mm] (0,0) -- ({1/3},{0});

            \node[fill,circle,minimum size=3mm,inner sep=0pt,label=left:{$x$}] at ({cos(135)},{sin(135)}) {};
            \node[fill,circle,minimum size=3mm,inner sep=0pt,label=left:{$y$}] at ({cos(135)},{-sin(135)}) {};
            \node[fill,circle,minimum size=3mm,inner sep=0pt,label=right:{$z$}] at (1,0) {};
            \node[fill,circle,minimum size=3mm,inner sep=0pt] at (0,0) {};

            \node[label=above:{$\tilde\gamma_3$}] at ({1/2},{0}) {};
            \node[label=right:{$\textcolor{lightgray}{\tilde\gamma_1}$}] at ({cos(135)/2},{sin(135)/2}) {};
            \node[label=right:{$\textcolor{gray}{\tilde\gamma_2}$}] at ({cos(135)/2},{-sin(135)/2}) {};
            \end{tikzpicture}  
			\caption{$\{0\}=\tilde\gamma_1([\frac{1}{2},1])=\tilde\gamma_2([\frac{1}{2},1])=\tilde\gamma_3([0,\frac{1}{2}])$.}
			\label{sf:TildeGammaCurves}
		\end{subfigure}
		\caption{Lipschitz curves $\gamma_i$ and $\tilde\gamma_i$ from \cref{SuperExamp}.}
	\end{figure}
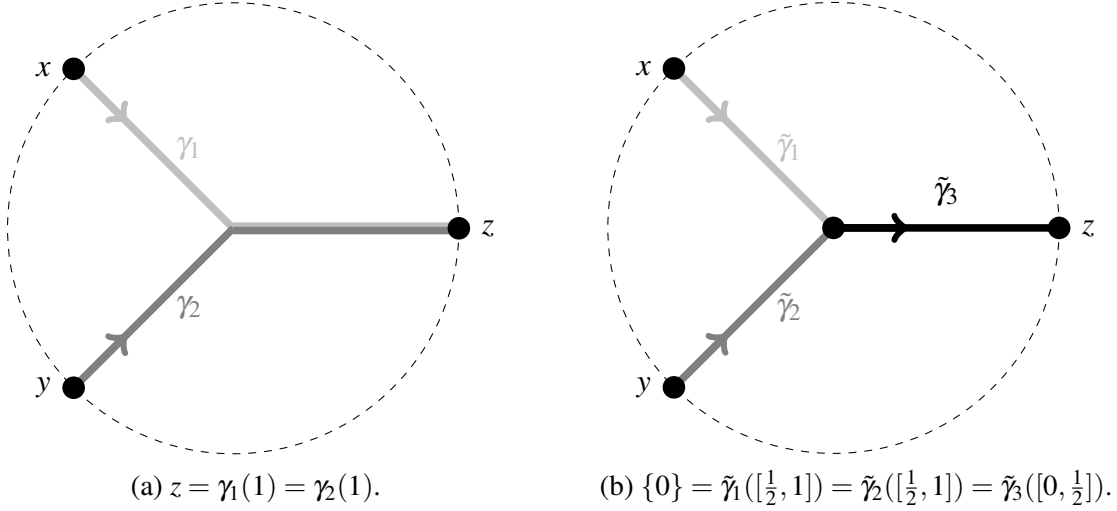


\section{Fully-corrective generalized conditional gradient method for BV curve tracking}
\label{OptAlg}
In this section, we design an algorithm which computes (approximate) solutions of \cref{BVtp} by exploiting the sparse structure given by \cref{representer} and, in particular, using the characterization of extremal points (\cref{extremalpointschar}). The algorithm will be an instance of the (grid-free) fully-corrective generalized conditional gradient (FC-GCG) algorithm introduced in \cite{bredies2024asymptotic} applied to our BV curve tracking problem. Similar algorithms have been used to approximate solutions of dynamic inverse problems regularized with Wasserstein-$2$ transport energies \cite{KCFR22, duval2024dynamical}. 
In order to apply \cite[Alg.\:1]{bredies2024asymptotic}, we require the following (in addition to the assumptions in \cref{BVtp}).
\begin{assump}[Numerical setup]
\label{NumSetup}
The fidelity $\fid$ is strictly convex, $\R$-valued, and Fr\'{e}chet differentiable such that the restriction of $D\fid :Y\to Y$ to any compact subset of $Y$ is Lipschitz. Further, forward operator $K$ is a linear map which can be linearly extended to a weak-$*$-to-strong continuous map on $\V$.
\end{assump}
Note that, compared to \cref{linearf}, data space $Y$ is not necessarily finite-dimensional. In our numerical experiments, we will use $Y=\R^{L\times (M+1)}$ (implying the validity of \cref{representer} under \cref{NumSetup}) and a discretization of an instance of the class of forward operators specified in \cref{KFinVal}. Further, we will use a quadratic fidelity term as in \cref{stability} (whose Fr\'{e}chet derivative at $y_0$ applied to $y$ is just the inner product $(y_0-f,y)_Y$ and therefore satisfies \cref{NumSetup}). 
Moreover, we re-introduce the regularization parameters $\alpha,\beta\in\R_+^*$ to calibrate the (discrete) regularizer in our computations.
\begin{prob}[Auxiliary problem formulation on $\V$]
\label{prob1:def}
Let \cref{NumSetup} be satisfied and $\widehat\for$ any linear extension of $K$ to $\V$ which is weak-$*$-to-strong continuous. Further, define $\widehat{\mathcal{R}}_{\alpha,\beta}:\V\to\R\cup\{\infty\}$ as in the proof of \cref{representer}, 
    \begin{equation*}
    \widehat{\mathcal{R}}_{\alpha,\beta}(\mu) =\begin{cases*}
		\mathcal{R}_{\alpha,\beta}(\mu) &if $\mu\in\mathcal{A}$,\\
		\infty&else.
		\end{cases*}
    \end{equation*}
Then \cref{BVtp} becomes
\begin{equation*}
    \min_{\mu \in \V} \mathcal J_{\alpha,\beta}(\mu), \qquad \textit{where} \qquad  \mathcal J_{\alpha,\beta}(\mu) = \fid(\widehat K\mu) + \widehat\reg_{\alpha,\beta}(\mu).
\end{equation*}
\end{prob}
\begin{rem}[Existence of pre-adjoint of $\widehat K$]
Clearly, forward operator $\widehat K:\V\to Y$ is weak-$*$-to-weak continuous. Thus, it admits a continuous and linear pre-adjoint \cite[Rem.\:3.2]{bredies2013inverse}. A simple application of the Hahn\textendash Banach separation theorem shows that it is unique.
\end{rem}
\begin{nota}[Pre-adjoint of $\widehat K$]
We write $\widehat K_*:Y\to\Vstar$ for the pre-adjoint of $\widehat K$ in \cref{prob1:def}, that is 
\begin{equation*}
\langle\widehat K_*f,\mu\rangle_{L_w^2}=(f,\widehat K\mu)_Y
\end{equation*}
for all $f\in Y,\mu\in\V$.
\end{nota}
\begin{prop}[{Embedding into the setting of \cite[Section 2]{bredies2024asymptotic}}]\label{prop:embedding}
Consider \cref{prob1:def}. Then the assumptions in \cite[Section 2]{bredies2024asymptotic} (including \cite[Section 2, (A1)-(A3)]{bredies2024asymptotic}) are satisfied.
\end{prop}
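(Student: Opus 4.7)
The plan is to match each of the ingredients of \cref{prob1:def} with the abstract requirements of \cite[Section 2]{bredies2024asymptotic}. The ambient predual Banach space is $\Vstar$, whose dual is the Banach space $\V$ endowed with the weak-$*$ topology $\T$. The decision variable lies in $\V$, the regularizer is $\widehat\reg_{\alpha,\beta}$, the forward operator is $\widehat K:\V\to Y$ (with $Y$ Hilbert), and the fidelity is $\fid:Y\to\R$. I would go through the assumptions (A1)--(A3) in turn, invoking the results already proved earlier in the paper.

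For the regularizer assumption (typically requiring $\widehat\reg_{\alpha,\beta}$ to be convex, proper, positively one-homogeneous, weak-$*$ lower semicontinuous, and with weak-$*$ compact sublevel sets), I would argue as follows. Properness and nonnegativity are immediate from the definition and from $\widehat\reg_{\alpha,\beta}(0)=0$. Positive one-homogeneity follows from $\omega(c\mu)=c\omega(\mu)$ and $\essvar(c\mu)=c\essvar(\mu)$ for $c\geq0$. Convexity is a consequence of the convexity of $\adm$ (\cref{structAdm}) together with the convexity of $\mu\mapsto\omega(\mu)$ and of $\mu\mapsto\essvar(\mu)$ on $\adm$, the latter being a standard property of the essential variation in metric spaces. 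Weak-$*$ lower semicontinuity on $\adm$ and the weak-$*$ compactness of $L_c^-(\widehat\reg_{\alpha,\beta})=L_c^-(\reg_{\alpha,\beta})$ follow directly from \cref{furtherProps} (the argument given there for $\alpha=\beta=1$ carries over verbatim). Lower semicontinuity on the full space $\V$ then follows by noting that the effective domain of $\widehat\reg_{\alpha,\beta}$ is exactly $\adm$, which is weak-$*$ sequentially closed (again by \cref{furtherProps} or by the proof of \cref{propertiesS}).

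For the forward operator and the fidelity, the verification is essentially a restatement of \cref{NumSetup}. The operator $\widehat K:\V\to Y$ is continuous, linear, and weak-$*$-to-strong continuous by assumption, and admits a continuous, linear pre-adjoint $\widehat K_*:Y\to\Vstar$ as remarked below \cref{prob1:def}. The fidelity $\fid$ is $\R$-valued, convex (in fact strictly convex), bounded below, proper, weakly lower semicontinuous (as a strictly convex $\R$-valued Fr\'echet differentiable function on a Hilbert space), and Fr\'echet differentiable with the derivative $D\fid$ being Lipschitz on compact subsets of $Y$. Coercivity of $\mathcal J_{\alpha,\beta}$ on $\V$, if required, is obtained from $\|\mu\|_{L_w^2}=\omega(\mu)\leq\alpha^{-1}\widehat\reg_{\alpha,\beta}(\mu)$ and the lower bound on $\fid$.

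The main obstacle I foresee is not any deep estimate but bookkeeping: the precise statement of (A1)--(A3) in \cite{bredies2024asymptotic} may phrase the lower semicontinuity and compactness in terms of the weak-$*$ topology on $\V$ rather than in the a.e.\ sense used in \cref{compactness}, and care is needed to show these coincide on sublevel sets of $\widehat\reg_{\alpha,\beta}$. This compatibility is precisely what \cref{furtherProps} provides (together with the observation in \cref{convExamp} that the a.e.\ convergence need only hold up to a subsequence), so the argument reduces to quoting that lemma and its proof. With this correspondence in hand, all three assumptions follow directly and \cref{prob1:def} fits into the framework of \cite[Section 2]{bredies2024asymptotic}.
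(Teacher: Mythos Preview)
Your proposal is correct and follows essentially the same route as the paper: both arguments verify convexity of $\widehat\reg_{\alpha,\beta}$ via \cref{structAdm}, lower semicontinuity and weak-$*$ compactness of sublevel sets via \cref{furtherProps}, positive homogeneity directly from the definition, and the remaining requirements on $\widehat K$ and $\fid$ from \cref{NumSetup}. Your write-up is simply more explicit than the paper's terse four-line proof, and your remark about reconciling the a.e.\ and weak-$*$ notions through \cref{furtherProps} is exactly the point the paper is tacitly invoking.
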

\begin{proof}
Regularizer $\widehat \reg$ is convex by the convexity of $\adm$ (\cref{structAdm}), lower semicontinuous with compact sublevel sets (with respect to $\T$) by \cref{furtherProps}, and nonnegative homogeneous by definition: $\widehat\reg(\lambda\mu)=\lambda\widehat\reg(\mu)$ for all $\lambda\in\R_+,\mu\in \V$ (with the convention $0\cdot\infty=0$). The other requirements in \cite[Section 2]{bredies2024asymptotic} readily follow from the setup in \cref{prob1:def}.
\end{proof}
\subsection{Description of the algorithm}
\label{DescrAlg}
In this section, we explain the FC-GCG algorithm which we will use to compute (approximate) minimizers of \cref{prob1:def} and relate it to the steps of the general FC-GCG algorithm in \cite{bredies2024asymptotic}.

In the $k$-th iteration, the algorithm updates two \textit{active sets} (term `active' indicates that they are modified at each iteration), namely a set of BV curves in $\C$ denoted by $\Gamma^k = \{\gamma^k_i\}_{i=1}^{N_{k}} \subset BV(I;\C)$ and a set of weights $\Lambda^k = \{\lambda^k_i\}_{i=1}^{N_{k}} \subset \R_+^*$ such that the BV curves $\mu^k \in \adm$ given by
\begin{equation*}
    \mu^k = \sum_{i=1}^{N_{k}} \lambda^k_i \delta_{\gamma_i^k} 
    \end{equation*}
 converge (up to a subsequence, with respect to $\T$) to a solution of \cref{prob1:def} (see \cref{AlgConv}). Note that each iterate $\mu^k$ is sparse but, a priori, the limit (or ground truth) may be non-sparse. 
At each iteration of the algorithm, the sets $\Gamma^k$ and $\Lambda^k$ are updated to $\Gamma^{k+1}$ and $\Lambda^{k+1}$ following several steps which we will describe below. We use the following abbreviation.
 \begin{nota}[Dual variable $p^k$]
    \label{dual}
    In the remainder, we write (often referred to as `dual variable')
\begin{equation*}
p^k = -\widehat K_*D\mathcal{F}(\widehat K\mu^k)\in\Vstar,
\end{equation*}
see, e.g.,\:\cite[Proposition 2.3]{bredies2024asymptotic}.
\end{nota}
The update of the active sets $\Gamma^k$ and $\Lambda^k$ consists of the following three steps:

\underline{1.\:Insertion step:} A new BV curve $\gamma^k_{N_k + 1} \in BV(I;\Omega)$ is determined by solving the following variational problem: 
\begin{equation}
\label[probl]{prob2:insertion}
    \gamma^k_{N_k + 1} \in \argmax_{\gamma \in BV(I;\Omega)}  \max\left\{ a(\gamma)   \int_I p_t^k(\gamma(t)) \e\Leb(t), 0 \right\},
\end{equation}
where 
\begin{align}\label{eq:cons}
a(\gamma) = (\alpha + \beta\essvar(\gamma))^{-1}.    
\end{align}
Problem \eqref{prob2:insertion} corresponds to the insertion step of FC-GCG algorithms defined in \cite[(3.1)]{bredies2024asymptotic},
\begin{align}\label[probl]{eq:insextre}
    \hat u\in\argmax_{u \in \ext (L_1^-(\reg_{\alpha,\beta}))} \langle p^k, u\rangle_{L_w^2}. 
\end{align}
Indeed, due to our characterization of extremal points in \cref{extremalpointschar} (compare with \cite[(8)]{KCFR22}) one can readily check that \cref{eq:insextre} is equivalent to \cref{prob2:insertion}. As a consequence, the $\argmax$ in \cref{prob2:insertion} is non-empty by \cite[Lem.\:A.1]{bredies2024asymptotic}. Note that, in general, functions $p^k_t$ may be non-convex for all $t$ in a set of positive $\Leb$-measure which makes \cref{prob2:insertion} non-convex.
Therefore, its accurate solution is challenging. For details related to the practical implementation of the optimization in \cref{prob2:insertion} we refer to \cref{ImplDet}.
After \cref{prob2:insertion} is solved, the BV curve $\gamma^k_{N_k + 1}$ is added to the active set of curves, $\Gamma^{k,+} = \Gamma^k \cup \{\gamma^k_{N_k + 1}\}$. This justifies the name insertion step.

\underline{2.\:Coefficients optimization step:}
The active set $\Lambda^k$ is updated to $\Lambda^{k,+} = \{\hat \lambda_i^{k}\}_{i=1}^{N_k+1}$ where $\hat \lambda_i^{k}$ are obtained by solving the finite-dimensional problem
\begin{equation}
\label[probl]{prob3:coeff}
    \hat \lambda^{k} = \argmin_{\lambda \in \R_+^{N_k+1}}\mathcal{F}\left(\sum_{i=1}^{N_k + 1} \lambda_i a(\gamma^k_i) \widehat K\delta_{\gamma^k_i} \right) + \sum_{i=1}^{N_k + 1} \lambda_i,
\end{equation}
where $\gamma_1^k,\ldots,\gamma_{N_k}^k\in\Gamma^k$ are transferred from the $(k-1)$-th iteration.
This problem corresponds to the coefficients optimization step of FC-GCG algorithms defined in \cite[(3.4)]{bredies2024asymptotic}. Problem \eqref{prob3:coeff} is easily solvable by proximal methods or Newton methods.

\underline{3.\:Pruning:} We define $N_{k+1}$ as the number of non-zero coefficients in $\hat \lambda^{k}$ and construct $\Gamma^{k+1}$ and $\Lambda^{k+1}$ by removing from $\Gamma^{k,+}$ and $\Lambda^{k,+}$ the elements whose indices correspond to the zero coefficients.

\underline{Stopping criterion:}
We add a stopping criterion which is derived from the optimality conditions associated with \cref{prob1:def}.  
In particular, we terminate the algorithm in iteration $k \in\mathbb{N}$ with output $\mu^k$ if 
\begin{equation}
\label[ineq]{in2:stopping}
    a( \gamma^k_{N_k+1} )\int_I p_t^k(\gamma^k_{N_k+1}(t)) \e\Leb(t)  \leq 1.
\end{equation}
This criterion matches the one proposed in \cite[Prop.\:3.1]{bredies2024asymptotic} where it is formulated as (cf.\:\cite[(3.5)]{bredies2024asymptotic})
\begin{equation*}\label{eq:stooo}
    \max_{u \in \ext (L_1^-(\reg_{\alpha,\beta}))} \langle p^k, u\rangle_{L_w^2} \leq 1.
\end{equation*}
Again, this follows from our characterization of extremal points in \cref{extremalpointschar}.
In our implementation, we will add a tolerance $\varepsilon>0$ on the right-hand side of \cref{in2:stopping}.
\begin{rem}[Insertion of $0\in\adm$]
Note that, if  
\begin{align*}
\max_{\gamma \in BV(I;\Omega)}  \max\left\{a(\gamma)   \int_I p_t^k(\gamma(t)) \e\Leb(t), 0 \right\}  = 0,
\end{align*}
which corresponds to the insertion of the extremal point $\hat u = 0$ in \cref{eq:insextre},
then it necessarily holds that $\max_{\gamma \in BV(I;\Omega)}a(\gamma)   \int_I p_t^k(\gamma(t)) \e\Leb(t) \leq 1$, implying that the stopping criterion in \cref{in2:stopping} is satisfied. Therefore, we can replace \cref{prob2:insertion} with the simplified insertion step 
\begin{equation}\label[probl]{prob:newinsertionstep}
\gamma_{N_{k}+1}^k\in\argmax_{\gamma \in BV(I;\Omega)}  a(\gamma) \int_I p_t^k(\gamma(t)) \e\Leb(t).
\end{equation}
\end{rem}
\Cref{AlgDynamic} summarizes the steps described above.
  \begin{algorithm}[t]
  \caption{Fully-corrective generalized conditional gradient method for BV curve tracking}
  \label{AlgDynamic}
  \begin{algorithmic}
    \REQUIRE $\Lambda^0 = \emptyset$,
$\Gamma^0 = \emptyset$, $N_0 = 0$, $\mu^0 = 0$
\FOR{$k=0,1,2,\dots$}
    \STATE $p^k \gets  -\widehat K_*D\mathcal{F}(\widehat K\mu^k)$
    \STATE $\gamma^k_{N_k+1} \in  \argmax_{\gamma \in BV(I;\Omega)}  a(\gamma) \int_I p_t^k(\gamma(t)) \e\Leb(t)$ 
    \IF{$k \in\mathbb{N}$ and $a(\gamma^k_{N_k+1})\int_I p_t^k(\gamma^k_{N_k+1}(t)) \e\Leb(t)  \leq 1$}
    \RETURN $\mu^k$
    \ENDIF
    \STATE $(\hat \lambda^k_1, \ldots, \hat \lambda^k_{N_k+1})^\intercal\in  \argmin_{\lambda \in \R_+^{N_k+1}}\mathcal{F}\left(\sum_{i=1}^{N_k+1} \lambda_i a(\gamma^k_i)\widehat K\delta_{\gamma^k_i} \right) + \sum_{i=1}^{N_k+1} \lambda_i$ 
    \STATE $\Gamma^{k+1} \gets \left\{\gamma_i^k \in \Gamma^{k} \cup \{\gamma^k_{N_k+1}\}\:\middle|\: \hat \lambda^k_i >0\right\}$, $\Lambda^{k+1} \gets \{\hat \lambda^k_i\:|\: \hat \lambda^k_i >0\}$ 
    \STATE $\mu^{k+1} \gets \sum_{i=1}^{N_{k} + 1} \hat \lambda^k_i \delta_{\gamma_i^k}$
    \STATE $N_{k + 1} \gets \# \Gamma^{k+1}$
\ENDFOR
\end{algorithmic}
\end{algorithm}
\cite[Thm.\:3.3]{bredies2024asymptotic} and \cref{prop:embedding} imply the convergence properties of the iterates produced by \cref{AlgDynamic}. 
We state the rate of convergence in terms of the residuals 
\begin{align}\label{eq:residuals}
    r(\mu^k) = \mathcal J_{\alpha,\beta}(\mu^k) - \min_{\mu \in \adm} \mathcal J_{\alpha,\beta}(\mu).
\end{align}
\begin{coro}[{Sublinear convergence of \cref{AlgDynamic}, cf.\:\cite[Thm.\:3.3]{bredies2024asymptotic}}]
\label{AlgConv}
The iterates $\mu^k \in \adm$ produced by \cref{AlgDynamic} (extended with $\mu^k= \mu^{k_0}$ for $k\geq k_0$ if the algorithm terminates in iteration $k_0$) converge, up to subsequences, in $\T$ to a minimizer of \cref{prob1:def}. Moreover, there exists $C\in\R_+^*$ such that
\begin{equation*}
\label[ineq]{in3:residuall}
    r(\mu^k) \leq \frac{C}{k+1}
\end{equation*}
for all $k \in \mathbb{N}$.
\end{coro}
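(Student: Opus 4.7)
The plan is to deduce the corollary directly from \cite[Thm.\:3.3]{bredies2024asymptotic} after verifying that \cref{AlgDynamic} is a faithful instance of the abstract FC-GCG algorithm of \cite[Alg.\:1]{bredies2024asymptotic}. Thanks to \cref{prop:embedding}, the abstract hypotheses of \cite[Section 2]{bredies2024asymptotic} hold for \cref{prob1:def}, so only the concrete correspondence of the algorithmic steps remains to be checked. The sublinear rate $r(\mu^k)\leq C/(k+1)$ is then a direct transcription of the rate proven in \cite[Thm.\:3.3]{bredies2024asymptotic}.

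The main work is the identification of the insertion step. First I would recall that in the abstract framework the insertion step is \cref{eq:insextre}, a maximization of $\langle p^k,\cdot\rangle_{L_w^2}$ over $\ext(L_1^-(\reg_{\alpha,\beta}))$. Using \cref{extremalpointschar}, every nonzero $u\in\ext(L_1^-(\reg_{\alpha,\beta}))$ has a \cad\:representative of the form $\omega(u)\delta_{\gamma(t)}$ with $\gamma\in BV(I;\Omega)$ and satisfies $\reg_{\alpha,\beta}(u)=\omega(u)(\alpha+\beta\essvar(\gamma))=1$, whence $\omega(u)=a(\gamma)$ as defined in \cref{eq:cons}. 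Plugging this parametrization in yields
\begin{equation*}
\langle p^k,u\rangle_{L_w^2}=\int_I\langle p_t^k,\omega(u)\delta_{\gamma(t)}\rangle\e\Leb(t)=a(\gamma)\int_Ip_t^k(\gamma(t))\e\Leb(t),
\end{equation*}
so that \cref{eq:insextre} is equivalent to \cref{prob:newinsertionstep}. The same change of variables turns the coefficient optimization step of \cite[(3.4)]{bredies2024asymptotic} into \cref{prob3:coeff} (each active atom is a rescaled extremal point $a(\gamma_i^k)\delta_{\gamma_i^k}$, whose contribution to the forward-model evaluation is $\lambda_i a(\gamma_i^k)\widehat K\delta_{\gamma_i^k}$), and the stopping criterion of \cite[Prop.\:3.1]{bredies2024asymptotic} becomes \cref{in2:stopping} by the same calculation. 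With this dictionary in place, \cite[Thm.\:3.3]{bredies2024asymptotic} applies and yields the asserted convergence rate.

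It remains to deduce the subsequential convergence in $\T$ to a minimizer of \cref{prob1:def}. From the rate one has $\mathcal{J}_{\alpha,\beta}(\mu^k)\to\min\mathcal{J}_{\alpha,\beta}$; since $\fid$ is bounded below, $\widehat\reg_{\alpha,\beta}(\mu^k)$ is uniformly bounded, hence $(\mu^k)\subset L_c^-(\reg_{\alpha,\beta})$ for some $c\in\R_+$. By \cref{furtherProps} this sublevel set is $\T$-compact, so up to a subsequence $\mu^k\ws\mu^\infty$ in $\T$ with $\mu^\infty\in\adm$. Lower semicontinuity of $\widehat\reg_{\alpha,\beta}$ (\cref{furtherProps}) and weak-$*$-to-strong continuity of $\widehat K$ combined with continuity of $\fid$ (\cref{NumSetup}) give
\begin{equation*}
\mathcal{J}_{\alpha,\beta}(\mu^\infty)\leq\liminf_k\mathcal{J}_{\alpha,\beta}(\mu^k)=\min_{\mu\in\V}\mathcal{J}_{\alpha,\beta}(\mu),
\end{equation*}
so that $\mu^\infty$ is a minimizer, as required.

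The main obstacle is the notational alignment with the abstract framework: the algorithmic iterates are written as $\mu^k=\sum_i\hat\lambda_i^k\delta_{\gamma_i^k}$, whereas the abstract theory parameterizes them as convex combinations of extremal points $a(\gamma_i^k)\delta_{\gamma_i^k}$. Once the rescaling $\lambda_i\leftrightarrow\hat\lambda_i^k/a(\gamma_i^k)$ is used consistently and \cref{extremalpointschar} is invoked to identify the extremal points, the remaining arguments reduce to a verification of the hypotheses of \cite[Thm.\:3.3]{bredies2024asymptotic} already supplied by \cref{prop:embedding,furtherProps}.
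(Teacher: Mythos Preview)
Your proposal is correct and follows essentially the same route as the paper: the corollary is stated there without a separate proof, relying on \cref{prop:embedding} together with the preceding discussion (which already identifies the insertion, coefficient-optimization and stopping steps of \cref{AlgDynamic} with their abstract counterparts via \cref{extremalpointschar}) to invoke \cite[Thm.\:3.3]{bredies2024asymptotic} directly. Your additional, self-contained derivation of the subsequential $\T$-convergence from the rate via \cref{furtherProps} is a harmless elaboration of what the paper leaves to the cited theorem.
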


\subsection{Discretization approach}
\label{DerivationForward}
In our implementation of \cref{AlgDynamic}, we will use discretizations which are obtained by letting the temporal blur at discrete time points go to zero. In this section, we will make this conception rigorous. We will need the following definition.
	\begin{defin}[Nascent $(\theta,t)$-delta functions]
		Let $\theta\in[0,1]$ and $ t\in I$. A family of functions $\phi^\delta\in C(I;\R_+)$ (indexed by $\delta\in\R_+^*$) is called a family of \textbf{nascent $(\theta,t)$-delta functions} if 
		\begin{itemize}
			\item $\phi^\delta(s)=0$ for all $s\in I\setminus[ t-\delta, t+\delta]$,
			\item the numbers $\theta^\delta=\int_{[0, t]}\phi^\delta\e\Leb$ satisfy 
			\begin{equation*}
			\theta^\delta\in[0,1],\qquad\int_{[ t,1]}\phi^\delta\e\Leb=1-\theta^\delta,\qquad\textup{and}\qquad\theta^\delta\to\theta\textup{ for }\delta\to 0.
			\end{equation*}
		\end{itemize}
	\end{defin} 
	Note that necessarily $\theta^\delta\equiv 0$ if $ t=0$ and $\theta^\delta\equiv 1$ if $ t=1$. Thus, for $t=0$ (respectively $t=1$) such a family only exists if $\theta=0$ (respectively $\theta=1$).
	\begin{lem}[`$(\theta,t)$-delta']
		\label{WeightedLemma}
		Let $\psi\in BV(I;\R)$ and $\phi^\delta\in C(I;\R_+)$ a family of nascent $(\theta,t)$-delta functions. Then we have
		\begin{equation*}
		\lim_{\delta\to 0}\int_{[0, t]}\phi^\delta\psi\e\Leb=\theta\bar{\psi}_{ t}^-\qquad\textup{and}\qquad\lim_{\delta\to 0}\int_{[ t,1]}\phi^\delta\psi\e\Leb=(1-\theta)\bar{\psi}_{ t},
		\end{equation*}
		where $\bar{\psi}$ is any c\`{a}dl\`{a}g representative of $\psi$.
	\end{lem}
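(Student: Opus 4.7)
The plan is to replace $\psi$ by a càdlàg representative $\bar\psi$ (the two integrands coincide a.e., so the Lebesgue integrals are unchanged) and then exploit the one-sided limits of $\bar\psi$ at $t$. Since $\psi\in BV(I;\R)$, there is a càdlàg representative $\bar\psi$ with $\var(\bar\psi)=\essvar(\psi)<\infty$, and any such càdlàg BV function on $I=[0,1]$ is bounded; call $M=\sup_I|\bar\psi|$.

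For the first identity, note that $\phi^\delta$ is supported in $[t-\delta,t+\delta]$, so on $[0,t]$ only values of $\bar\psi$ on $[t-\delta,t]$ matter. Since $\{t\}$ is $\Leb$-null, the integral is unaffected by $\bar\psi_t$. Writing
\begin{equation*}
\int_{[0,t]}\phi^\delta\bar\psi\e\Leb - \theta\bar\psi_t^-
= \int_{[t-\delta,t)}\phi^\delta(\bar\psi - \bar\psi_t^-)\e\Leb + (\theta^\delta-\theta)\bar\psi_t^-,
\end{equation*}
the second summand vanishes as $\delta\to 0$ by definition of a family of nascent $(\theta,t)$-delta functions. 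For the first, given $\varepsilon>0$ the existence of the left limit $\bar\psi_t^-$ supplies $\delta_0>0$ such that $|\bar\psi_s-\bar\psi_t^-|<\varepsilon$ for all $s\in(t-\delta_0,t)$; then for $\delta<\delta_0$ the first summand is bounded in absolute value by $\theta^\delta\varepsilon\leq\varepsilon$. Hence the limit equals $\theta\bar\psi_t^-$.

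For the second identity, the same argument works with the right-continuity of $\bar\psi$ at $t$ in place of the left limit. One writes
\begin{equation*}
\int_{[t,1]}\phi^\delta\bar\psi\e\Leb - (1-\theta)\bar\psi_t
= \int_{[t,t+\delta]}\phi^\delta(\bar\psi - \bar\psi_t)\e\Leb + \bigl((1-\theta^\delta)-(1-\theta)\bigr)\bar\psi_t,
\end{equation*}
uses $\bar\psi_s\to\bar\psi_t$ as $s\searrow t$ (so that for $\delta$ small, $|\bar\psi_s-\bar\psi_t|<\varepsilon$ uniformly on $[t,t+\delta]$), and again invokes $\theta^\delta\to\theta$. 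The boundary cases $t=0$ (forcing $\theta=0$, so the left integral trivially vanishes) and $t=1$ (forcing $\theta=1$, so the right integral trivially vanishes) are consistent with the above formulas.

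The main obstacle is not deep: it is essentially the careful handling of the endpoint $t$ at which $\bar\psi$ may jump, together with the observation that the Lebesgue integrals do not see the point mass at $\{t\}$, so that the relevant limits are $\bar\psi_t^-$ on the left and $\bar\psi_t$ on the right. Boundedness of $\bar\psi$ (from BV on a compact interval) guarantees that the error terms involving $\theta^\delta-\theta$ are controlled.
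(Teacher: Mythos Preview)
Your proof is correct and follows essentially the same approach as the paper's: both use a c\`{a}dl\`{a}g representative, the support condition on $\phi^\delta$, the one-sided limits of $\bar\psi$ at $t$, and the convergence $\theta^\delta\to\theta$. The only cosmetic difference is that the paper packages the estimate as a sandwich $\theta^\delta(\bar\psi_t^--\varepsilon)\leq\int_{[0,t]}\phi^\delta\psi\,\e\Leb\leq\theta^\delta(\bar\psi_t^-+\varepsilon)$, whereas you split the error additively into $\int\phi^\delta(\bar\psi-\bar\psi_t^-)\,\e\Leb$ and $(\theta^\delta-\theta)\bar\psi_t^-$; your version is arguably cleaner at the endpoint $t$ since you explicitly work on $[t-\delta,t)$ and avoid the harmless but slightly imprecise inclusion of $s=t$ in the paper's sandwich.
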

	\begin{proof}
		We only prove the first equality (the proof of the second is similar). Let $\varepsilon\in\R_+^*$ and $\delta=\delta(\varepsilon)\in\R_+^*$ with $\bar{\psi}_s\in[\bar{\psi}_{ t}^--\varepsilon,\bar{\psi}_{ t}^-+\varepsilon]$ for all $s\in [ t-\delta, t]\cap I$ and $\delta\to 0$ for $\varepsilon\to 0$ (possible because $\bar{\psi}_t^-$ exists). Then, using $\int_{[0, t]}\phi^\delta\e\Leb=\theta^\delta\to\theta$, $\phi^\delta\in\R_+$, and $\phi= 0$ on $[0, t-\delta)\cap I$, we obtain
		\begin{equation*}
		\theta\bar{\psi}_{ t}^-=\lim_{\varepsilon\to 0}\int_{[0, t]}\phi^{\delta}\e\Leb\cdot(\bar{\psi}_{ t}^--\varepsilon)\leq\lim_{\varepsilon\to 0}\int_{[0, t]}\phi^{\delta}\psi\e\Leb\leq\lim_{\varepsilon\to 0} \int_{[0, t]}\phi^{\delta}\e\Leb\cdot(\bar{\psi}_{ t}^-+\varepsilon)=\theta\bar{\psi}_{ t}^-.\qedhere
		\end{equation*}
	\end{proof}  
    The following statement will be applied to our forward operator from \cref{KFinVal}.
    \begin{prop}[Vanishing temporal blur]
    \label{VanishingBlur}
    Let $\mu\in\adm$, $\phi^\delta\in C(I;\R_+)$ a family of nascent $(\theta,t)$-delta functions, and $\Phi:\Omega\to\R$ Lipschitz. Then we get
	\begin{multline*}
	\lim_{\delta\to 0}\int_{[0, t]}\phi^\delta(s)\int_\Omega\Phi(x)\e\mu_s(x)\e\Leb(s)=\theta\int_\Omega\Phi(x)\e\bar\mu_t^-(x)\qquad\textup{and}\\
    \lim_{\delta\to 0}\int_{[t,1]}\phi^\delta(s)\int_\Omega\Phi(x)\e\mu_s(x)\e\Leb(s)=(1-\theta)\int_\Omega\Phi(x)\e\bar\mu_t(x),
	\end{multline*}
	where $\bar{\mu}$ is any c\`{a}dl\`{a}g representative of $\mu$.
    \end{prop}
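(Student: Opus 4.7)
The plan is to reduce the assertion to the scalar statement in \cref{WeightedLemma} applied to
$$\psi(s)=\int_\Omega\Phi(x)\e\mu_s(x),\qquad s\in I.$$
First, I would verify that $\psi\in BV(I;\R)$. Writing $\mu=\omega(\mu)\rho$ with $\rho\in BV(I;\wass)$ and using the Kantorovich\textendash Rubinstein duality (valid because $\Phi$ is Lipschitz, in particular continuous on the compact set $\Omega$), one obtains
$$\left|\int_\Omega\Phi\e\rho_{s_1}-\int_\Omega\Phi\e\rho_{s_2}\right|\leq\textup{Lip}(\Phi)\,W_1(\rho_{s_1},\rho_{s_2})$$
for all $s_1,s_2\in I$. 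Fixing a representative $\tilde\rho$ of $\rho$ with $\var(\tilde\rho)=\essvar(\rho)<\infty$, this immediately yields $\essvar(\psi)\leq\omega(\mu)\textup{Lip}(\Phi)\essvar(\rho)=\textup{Lip}(\Phi)\essvar(\mu)<\infty$, and the bound $|\psi|\leq\|\Phi\|_\infty\omega(\mu)$ ensures $\psi\in L^1(I)$; hence $\psi\in BV(I;\R)$.

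Second, I would identify a c\`adl\`ag representative of $\psi$. Set $\bar\psi(s)=\int_\Omega\Phi\e\bar\mu_s$, where $\bar\mu$ is the given c\`adl\`ag representative of $\mu$. Equality $\bar\psi=\psi$ a.e.\ follows directly from $\bar\mu=\mu$ a.e. Right-continuity and the existence of left limits of $\bar\psi$ at every $t\in I$ are inherited from the corresponding properties of $\bar\mu$ in $\wass$ via the same Kantorovich\textendash Rubinstein estimate: from $W_1(\bar\mu_{s_n},\bar\mu_t)\to 0$ as $s_n\searrow t$ and from the existence of $\bar\mu_t^-=\lim_{s\nearrow t}\bar\mu_s$ in $W_1$, one deduces
$$\bar\psi_t=\int_\Omega\Phi\e\bar\mu_t\qquad\textup{and}\qquad\bar\psi_t^-=\int_\Omega\Phi\e\bar\mu_t^-.$$

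Finally, I would invoke \cref{WeightedLemma} applied to $\psi$ and its c\`adl\`ag representative $\bar\psi$, which produces exactly the two claimed limits once the identifications for $\bar\psi_t$ and $\bar\psi_t^-$ above are substituted. The implicit interchange of $\int_{[0,t]}\phi^\delta\e\Leb$ with $\int_\Omega\Phi\e\mu_s$ is just the definition of $\psi$ and needs no Fubini-type justification. The only step that is not entirely formal is the identification of the left limit $\bar\psi_t^-$ with $\int_\Omega\Phi\e\bar\mu_t^-$, but this is handled uniformly by the Lipschitz estimate above; I do not foresee a serious obstacle.
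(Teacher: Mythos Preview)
Your proposal is correct and follows essentially the same route as the paper: define $\psi_s=\int_\Omega\Phi\e\mu_s$, use the Kantorovich\textendash Rubinstein estimate to show $\psi\in BV(I;\R)$, then apply \cref{WeightedLemma} and identify $\bar\psi_t^-=\int_\Omega\Phi\e\bar\mu_t^-$ via the fact that $W_1$ metrizes weak-$*$ convergence. Your write-up is slightly more explicit about verifying that $\bar\psi$ is c\`adl\`ag, but the argument is the same.
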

    \begin{proof}
    As in \cref{WeightedLemma}, we only prove the first equality (the proof of the second is similar). Define $\psi_s=\int_\Omega\Phi\e\mu_s$ for a.e.\:$s\in I$ (recall that $\psi\in L^1(I;\R)$ by $\mu\in\V$). We prove $\psi\in BV(I;\R)$ and apply \cref{WeightedLemma}. We have ($\textup{Lip}(\Phi)$ denoting the Lipschitz constant of $\Phi$, we can assume $\textup{Lip}(\Phi)\in\R_+^*$ because the other case is trivial)
    \begin{multline*}
    \essvar(\psi)\leq\textup{var}\left(s\mapsto\int_\Omega\Phi\e\bar\mu_s\right)=\sup \left\{\sum_{i=1}^N \left|\int_\Omega\Phi\e(\bar\mu_{s_i}-\bar\mu_{s_{i-1}}) \right|\:\middle|\: s_0 < s_1<\ldots < s_N,\, s_i \in I  \right\}\\
    =\textup{Lip}(\Phi)\sup \left\{\sum_{i=1}^N \left|\int_\Omega\frac{1}{\textup{Lip}(\Phi)}\Phi\e(\bar\mu_{s_i}-\bar\mu_{s_{i-1}}) \right|\:\middle|\: s_0 < s_1<\ldots < s_N,\, s_i \in I  \right\}\\
    \leq\textup{Lip}(\Phi)\sup \left\{\sum_{i=1}^N W_1(\bar\mu_{s_i},\bar\mu_{s_{i-1}})\:\middle|\: s_0 < s_1<\ldots < s_N,\, s_i \in I  \right\}=\textup{Lip}(\Phi)\textup{var}(\bar\mu)<\infty
    \end{multline*}
    since $\mu\in\adm$, where we used the Kantorovich\textendash Rubinstein formula in the second inequality. Hence, we obtain $\psi\in BV(I;\R)$. Application of \cref{WeightedLemma} yields
    \begin{equation*}
    \lim_{\delta\to 0}\int_{[0, t]}\phi^\delta(s)\int_\Omega\Phi\e\mu_s\e\Leb(s)=\theta\bar{\psi}_{ t}^-=\theta\int_\Omega\Phi\e\bar\mu_t^-,
    \end{equation*}
    where the last equation follows from the fact that $W_1$ metrizes weak-$*$ convergence and $\Phi\in C(\Omega)$.
    \end{proof}
    Next, let us define a reasonable discretization of a forward operator as in \cref{KFinVal}. To this end, fix a discretization $0=t_0<\ldots<t_M=1$ of the time interval $I=[0,1]$ and, for each $j=0,1,\ldots,M$, let $\phi^\delta_j\in C(I;\R_+)$ be a family of nascent $(\theta_j,t_j)$-delta functions (in particular, we have $\theta_0=0$ and $\theta_M=1$). Define $\widetilde K=\widetilde K(\delta) : L_w^2(I;\mathcal{M}(\Omega)) \to \R^{L\times (M+1)}$ by
\begin{equation}
\label{eq:linex2}
    (\widetilde K\mu)^i_j = \int_I \phi^\delta_j(s)\int_\Omega \Phi^i(x)\e\mu_s(x)\e\Leb(s),
\end{equation}   
where $\Phi^1,\ldots,\Phi^L:\Omega\to\R$ are Lipschitz and may correspond to some spatial points $x_1,\ldots,x_L\in\Omega$, see \cref{KFinVal}. Recall that $\phi_j^\delta$ can be interpreted as the temporal blur at $t_j$ (of order $\delta$). Given $\mu\in\adm$, we may define matrix $K_0\mu\in\R^{L\times (M+1)}$ by (\cref{VanishingBlur})
 \begin{equation*}
     ( K_0\mu)^i_j = \lim_{\delta\to 0}(\widetilde K\mu)_j^i=\theta_j\int_\Omega\Phi^i\e\bar\mu_{t_j}^-+(1-\theta_j)\int_\Omega\Phi^i\e\bar\mu_{t_j}.
 \end{equation*}
Note that only the evaluation of any \cad\:representative $\bar \mu$ of $\mu$ is needed and the expression is well-defined in the sense that it does not depend on the value of $\bar\mu$ at $t=1$ (because $\theta_M=1$). Hence, this definition yields a natural discrete forward operator. More precisely, only the vectors $\bar\mu^+=(\bar\mu^{0,+},\ldots,\bar\mu^{M,+})=(\bar\mu_{t_0},\ldots,\bar\mu_{t_M})$ and  $\bar\mu^-=(\bar\mu^{0,-},\ldots,\bar\mu^{M,-})=(\bar\mu_{t_0}^-,\ldots,\bar\mu_{t_M}^-)$, which may be seen as time samples of a \cad\:curve in $\R_+\mathcal{D}_W(\Omega)$ representing the right and left `traces', are considered. For any pair of vectors $\bar\nu=(\bar\nu^+,\bar\nu^-)=((\bar\nu^{0,+},\ldots,\bar\nu^{M,+}),(\bar\nu^{0,-},\ldots,\bar\nu^{M,-}))\in\R_+(\probs(\Omega)^{M+1}\times\probs(\Omega)^{M+1})$, we set (with a slight abuse of notation)
\begin{equation*}
( K_0\bar\nu)^i_j=\theta_j\int_\Omega\Phi^i\e\bar\nu^{j,-}+(1-\theta_j)\int_\Omega\Phi^i\e\bar\nu^{j,+}.
\end{equation*}
Similarly, given samples $(\bar\nu^+,\bar\nu^-)$ of a \cad\:representative of $\nu=\delta_\gamma$ with $\gamma\in BV(I;\Omega)$, we write $\bar\gamma=(\bar\gamma^+,\bar\gamma^-)$ with $\bar\gamma^+=(\bar\gamma^{0,+},\ldots,\bar\gamma^{M,+})=(\bar\gamma_{t_0},\ldots,\bar\gamma_{t_M}),\bar\gamma^-=(\bar\gamma^{0,-},\ldots,\bar\gamma^{M,-})=(\bar\gamma_{t_0}^{-},\ldots,\bar\gamma_{t_M}^{-})$ and set
\begin{equation*}
( K_0\bar\gamma)^i_j=\theta_j\Phi^i(\bar\gamma^{j,-})+(1-\theta_j)\Phi^i(\bar\gamma^{j,+}).
\end{equation*}
Note that, in principle, each $\Phi^i$ may also be evaluated at values of other reasonable representatives of $\gamma$, e.g.\:representatives whose evaluations at the $t_j$ lie in between $\bar\gamma_{t_j}^-$ and $\bar\gamma_{t_j}^+$ (recall that $\Omega$ is convex).
Since we are particularly interested in \cad\:curves, we use the construction from above.

Next, let us apply the above procedure to discretize the functional $\gamma\mapsto a(\gamma)\int_Ip^k(\gamma)\e\Leb$ in the insertion step of \cref{AlgDynamic}, see \cref{prob:newinsertionstep}. First, we define
\begin{equation*}
p(\mu)=-\widetilde K_*\mathrm{D}\fid(\widetilde K\mu)\qquad\textup{and discretize}\qquad \langle p(\mu),\nu\rangle_{L_w^2}=-\mathrm{D}\fid(\widetilde K\mu):\widetilde K\nu
\end{equation*}
for arbitrary $\mu,\nu\in\V$, where $:$ denotes the Frobenius inner product on $Y=\R^{L\times (M+1)}$. We see directly that the latter expression depends continuously on the measurements $\widetilde K\mu$ and $\widetilde K\nu$ (provided that $\mathrm{D}\fid$ is continuous, which is satisfied by \cref{NumSetup}). Therefore, in the spirit of our discretization approach via temporal deblurring in the data space, we discretize $\langle p(\mu),\nu\rangle_{L_w^2}$ as 
\begin{equation*}
\lim_{\delta\to 0}\langle p(\mu),\nu\rangle_{L_w^2}.
\end{equation*}
In particular, if $\mu=\mu^k$ is an iterate of \cref{AlgDynamic}, then we discretize the functional in the insertion step (\cref{prob:newinsertionstep}) by (recall that $p^k=p(\mu^k)$)
\begin{equation}
\label{eq:discreins}
    D^k_0(\bar\gamma^+, \bar\gamma^-) = a_0(\alpha,\beta,\bar\gamma^+,\bar\gamma^-)\lim_{\delta\to 0}\langle p^k,\delta_\gamma\rangle_{L_w^2},
\end{equation}
where $\gamma\in BV(I;\Omega)$ is arbitrary with $\bar\gamma_{t_j}^{\pm}=\bar\gamma^{j,\pm}$ and
\begin{equation}\label{eq:disca}
a_0(\alpha,\beta,\bar\gamma^+,\bar\gamma^-) = \left(\alpha + \beta\sum_{j=0}^{M-1} (|\bar \gamma^{j,+}-\bar\gamma^{j+1,-}|+|\bar \gamma^{j,+}-\bar\gamma^{j,-}|)\right)^{-1}
\end{equation}
is a discretization of $a(\gamma)$. 
Indeed, the right-hand side in \cref{eq:discreins} does not depend on the choice of $\gamma$ (\cref{VanishingBlur}):
\begin{equation*}
 D^k_0(\bar\gamma^+, \bar\gamma^-) = -a_0(\alpha,\beta,\bar\gamma^+,\bar\gamma^-)\sum_{i=1}^L\sum_{j=0}^M(\mathrm{D}\fid(K_0\mu^k))_{i,j}(\theta_j\Phi^i(\bar\gamma^{j,-})+(1-\theta_j)\Phi^i(\bar\gamma^{j,+})).
\end{equation*}
\subsection{Implementation details}
\label{ImplDet}
This section is devoted to the implementation of \cref{AlgDynamic}. We limit ourselves to the one-dimensional case. It is well-known that in FC-GCG algorithms, the numerical bottleneck lies in the efficient solution of the (in general non-convex) insertion step (\cref{prob2:insertion}), see, for example, \cite[Section 5.1]{KCFR22} and \cite[Section 1.1]{bredies2024asymptotic}. In particular, this optimization is extremely challenging in higher-dimensional domains.
However, we believe that a numerical realization of \cref{AlgDynamic} in the case $n=2$ is possible by transferring the approaches in \cite{KCFR22,duval2024dynamical}. Further, it is likely that \cite{duval2024dynamical} can be applied to speed up the computations in the insertion step.

First, let us specify our (discrete) forward operator. Assume that $n=1$ and fix points $x_1,\ldots,x_L$ in some compact interval $\Omega\subset\R$. Further, pick a discretization $0=t_0<\ldots<t_M=1$ of the time interval $I=[0,1]$. Given $C_i,\sigma_i >0$, we define (truncated) Gaussian kernels $\Phi^i\in C(\Omega)$ (independent of $t$) by
\begin{equation}
\label{eq:GaussianKernels}
\Phi^i(x) = \frac{C_i}{\sigma_i}e^{\frac{-|x-x_i|^2}{2\sigma_i^2}} \qquad \text{for } \qquad i =1,\ldots,L,
\end{equation}
where $\sigma_i^2$ indicates variance and $C_i$ can be chosen arbitrarily (e.g.\:such that $\int_\Omega\Phi^i\e\Leb=1$). We use these functions in the definition of $\widetilde K=\widetilde K(\delta)$ (see \cref{eq:linex2}) and define $K_0$ as in \cref{DerivationForward}. Recall that 
\begin{equation*}
( K_0\bar\gamma)^i_j=\theta_j\Phi^i(\bar\gamma^{j,-})+(1-\theta_j)\Phi^i(\bar\gamma^{j,+})
\end{equation*}
whenever $\bar\gamma=(\bar\gamma^+,\bar\gamma^-)$ with $\bar\gamma^+=(\bar\gamma^{0,+},\ldots,\bar\gamma^{M,+})$ and $\bar\gamma^-=(\bar\gamma^{0,-},\ldots,\bar\gamma^{M,-})$ corresponds to time samples of a \cad\:representative of $\gamma\in BV(I;\Omega)$ respectively $\delta_\gamma\in BV(I;\wass)$. As mentioned in \cref{DerivationForward}, these vectors can be interpreted as the left and right `traces' of this representative at the discrete time points $t_j$. In particular, we enforce any reconstructed jump to take place at these points. In the remainder, we take $\theta_j=0$ for all $j\neq M$ (recall that $\theta_M=1$), which yields evaluation of $\Phi^i$ at the right-hand limits for $t=t_0,\ldots,t_{M-1}$ and evaluation at the left-hand limit for $t=t_M=1$ (which is reasonable by the non-uniqueness of a \cad\:representative of $\gamma$ at $t=1$). 
\begin{rem}[Space and time discretization]
In order to reduce the computational cost, we consider the time discretization $t_0,\ldots,t_M$. This is standard for such particular algorithms \cite{KCFR22, duval2024dynamical}. We stress that the discretization $x_1,\ldots,x_L$ corresponds to the definition of $\widetilde K$ (respectively $K_0$). In particular, a spatial numerical discretization is not needed because \cref{AlgDynamic} is grid-free (thus it allows for super-resolution).
\end{rem}
We consider the fidelity $\mathcal{F}:\mathbb{R}^{L \times (M+1)} \to \R_+$ defined by
\begin{equation*}
\mathcal{F}_f(y) = \frac{1}{2(M+1)}\|y - f\|^2_{F} ,
\end{equation*}
where $f \in \mathbb{R}^{L \times (M+1)}$ represents given reference data and $\|\cdot\|_F$ denotes the Frobenius norm. 
\begin{examp}[Formula for $D^k_0(\bar\gamma^+, \bar\gamma^-)$]
In the above setting, we have (\cref{VanishingBlur})
\begin{multline*}
D^k_0(\bar\gamma^+, \bar\gamma^-)=-\frac{a_0(\alpha,\beta,\bar\gamma^+, \bar\gamma^-)}{M+1}\sum_{j=0}^{M-1}\sum_{i_1=1}^{L}\Phi^{i_1}(\bar\gamma^{j,+})\left(\sum_{\ell_1=1}^{N}\lambda_{\ell_1}^k\Phi^{i_1}((\bar\gamma_\ell^k)^{j,+})-f_{j}^i\right)\\
-\frac{a_0(\alpha,\beta,\bar\gamma^+, \bar\gamma^-)}{M+1}\sum_{i_2=1}^{L}\Phi^{i_2}(\bar\gamma^{M,-})\left(\sum_{\ell_2=1}^{N}\lambda_{\ell_2}^k\Phi^{i_2}((\bar\gamma_\ell^k)^{M,-})-f_M^i\right),
\end{multline*}
where $\mu^k=\sum_{\ell=1}^N\lambda_\ell^k\delta_{\gamma_\ell^k}$ is an iterate of \cref{AlgDynamic}.
\end{examp}
In order to maximize $(\bar\gamma^+, \bar\gamma^-)\mapsto D^k_0(\bar\gamma^+, \bar\gamma^-)$, we adopt the multi-start gradient descent approach in \cite[Section 5.1]{KCFR22}. First, we sample from the uniform distribution in $\Omega$ two ($M+1$)-tuples $\bar\gamma_0^+ = (\bar \gamma_0^{0,+}, \ldots, \bar \gamma_0^{M,+})$ and $\bar\gamma_0^-  = (\bar \gamma_0^{0,-}, \ldots, \bar \gamma_0^{M,-})$. 
Then, using $\bar\gamma_0^+$ and $\bar\gamma_0^-$ as initializations, we run a gradient ascent algorithm\footnote{The Euclidean norm in \cref{eq:disca} is approximated by $\eta_\varepsilon(z) = \sqrt{|z|^2 + \varepsilon}$.} to maximize functional $D_0^k$ obtaining as output vectors $\bar\gamma_{0,*}^+$, $\bar\gamma_{0,*}^-$. 
We repeat this procedure for a fixed amount of random initializations $q=0,...,Q$. 
Then we store the vectors $\bar\gamma_{q_{max},*}^+$, $\bar\gamma_{q_{max},*}^-$ which yield the maximum value of $q\mapsto D^k_0(\bar\gamma_{q,*}^+,\bar\gamma_{q,*}^-)$. These vectors will be the output of the insertion step. In order to balance between accuracy and efficiency, it is crucial to carefully choose the number of initializations $Q$ of the gradient ascent algorithm. In all the experiments in \cref{NumRes}, we take $Q=150$.  
\subsection{Numerical experiments}
\label{NumRes}
In the following experiments, we use $\Omega = [0,5]$ and equidistant time discretization $0=t_0 < \ldots < t_{M} =1$ with $M = 30$. 
We employ the forward operator $\widetilde K$ given by \cref{eq:linex2,eq:GaussianKernels} with equidistant points $0=x_1<\ldots<x_{100}=5$ in $\Omega$ and $C_i\equiv \frac{1}{\sqrt{2\pi}},\sigma_i^2 \equiv 0.02$.

\underline{Three curves, no noise:}
In the first numerical experiment, we aim to reconstruct the ground truth $\bar\mu^\dagger\in\mathcal{D}_E(\Omega)$ given by 
\begin{equation}
\label{eq:gt1}
    \bar\mu^\dagger = \delta_{\bar\gamma_1^\dagger} + \delta_{\bar\gamma_2^\dagger} + \delta_{\bar\gamma_3^\dagger},
\end{equation}
where 
\begin{equation}
\label{eq:gt2}
    \bar\gamma_1^\dagger(t) =  t + 3.5, \qquad \bar\gamma_2^\dagger(t) = \sqrt{t} + 2.5 , \qquad\textup{and}\qquad \bar\gamma_3^\dagger(t) =
    \begin{cases*}
        1+t^2&if $t<0.5$,\\
        2+t^2&if $t\geq 0.5$.
    \end{cases*}
\end{equation}
A discretization of the ground truth $\bar\mu^\dagger$ is given in \cref{eq:gt}. 
\begin{figure}[h!]
		\centering
		\includegraphics[width=8.2cm,height=6.4cm]{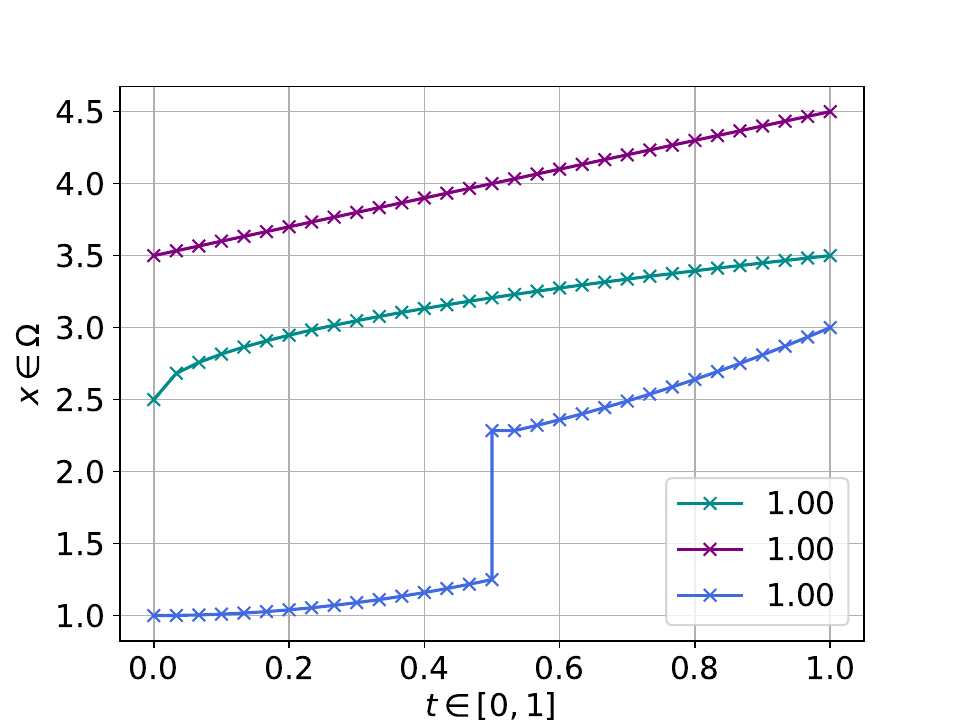}
        \caption{Discretization of the ground truth $\bar\mu^\dagger$ defined by \cref{eq:gt1,eq:gt2}. The legend shows the weights associated with the \cad\:curves $\delta_{\bar\gamma_i^\dagger}$.}
        \label{eq:gt}
\end{figure}

We consider the reference data $f = K_0 \bar\mu^\dagger \in \mathbb{R}^{L \times (M+1)}$ (recall that $K_0$ equals the pointwise limit of $\widetilde K=\widetilde K(\delta)$) and choose regularization parameters $\alpha = 5$ and $\beta = 2$.
Our reconstruction is depicted in \cref{sf:Exp1a}. \Cref{sf:Exp1b} shows the convergence of the residuals (\cref{eq:residuals}).
\begin{figure}[t]
		\centering
		\begin{subfigure}[b]{0.45\textwidth}
			\centering
            \includegraphics[width=8.2cm,height=6.4cm]{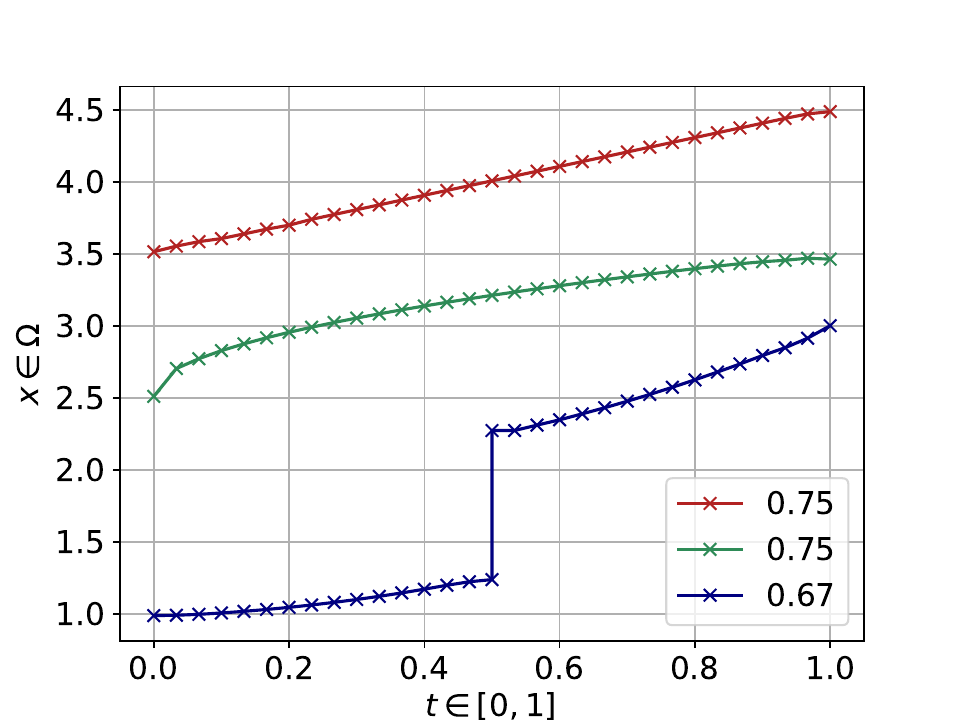} 
			\caption{$\alpha = 5$ and $\beta = 2$.}
			\label{sf:Exp1a}
		\end{subfigure}
        \begin{subfigure}[b]{0.45\textwidth}	
            \centering
\includegraphics[width=8.2cm,height=6.4cm,trim=0 0.5cm 0 -1.3cm]{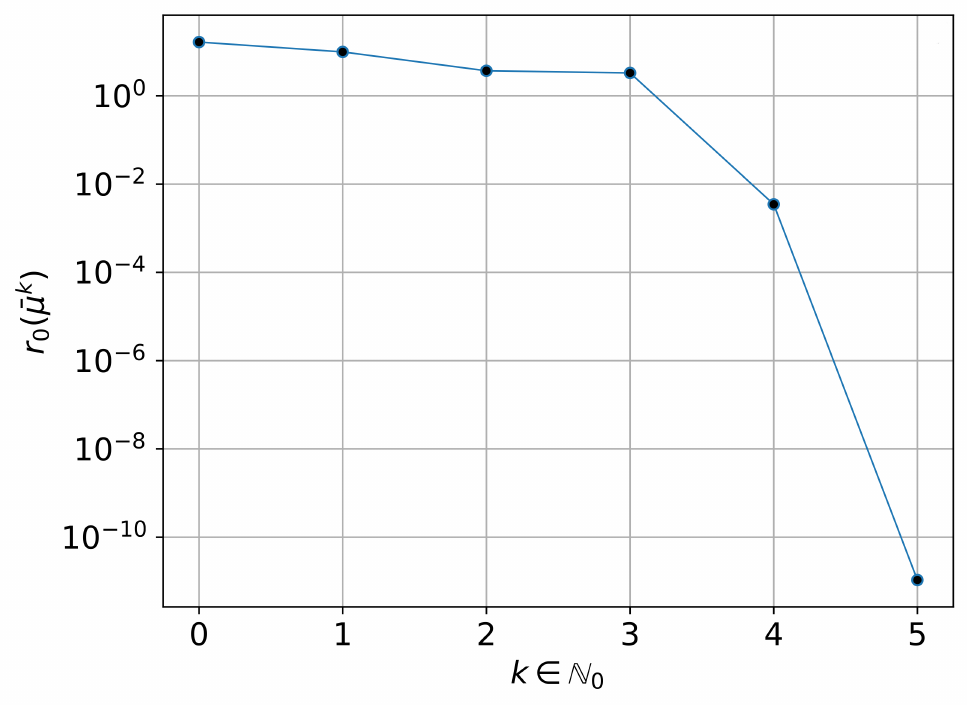}
			\caption{Convergence of residuals ($\log$ scale).}
			\label{sf:Exp1b}
		\end{subfigure}
\caption{Reconstruction of the discretized ground truth from \cref{eq:gt}. Note that it is faithful to the ground truth in the sense that it recovers the jump of $\bar\gamma^\dagger_3$. The effect of regularization is observable in the attenuated weights and decreased variation of each curve (in particular, close to $t = 0$ and $t=1$).}
	\end{figure} 
Note that, since $\min  \mathcal J_{\alpha,\beta}$ is unknown, we approximate the residuals by evaluating our discretization\footnote{It is naturally defined in the spirit of \cref{DerivationForward} using $K_0$ and an approximation of the essential variation as in $a_0$, see \cref{eq:disca}.} of $\mathcal{J}_{\alpha,\beta}$ at the last iterate returned by \cref{AlgDynamic}. 
The same idea was used in \cite{KCFR22}. 
The corresponding residuals are denoted by $r_0(\bar \mu^k)$. 
To highlight the effect of our regularizer $\reg_{\alpha,\beta}$, we perform another reconstruction using $\alpha = 12$ and $\beta = 5$.
It is shown in \cref{sf:Exp2a} (next to the residuals $k\mapsto r_0(\bar\mu^k)$ in \cref{sf:Exp2b}).
\begin{figure}[h!]
\begin{subfigure}[b]{0.45\textwidth}
\centering
\includegraphics[width=8.2cm,height=6.4cm]{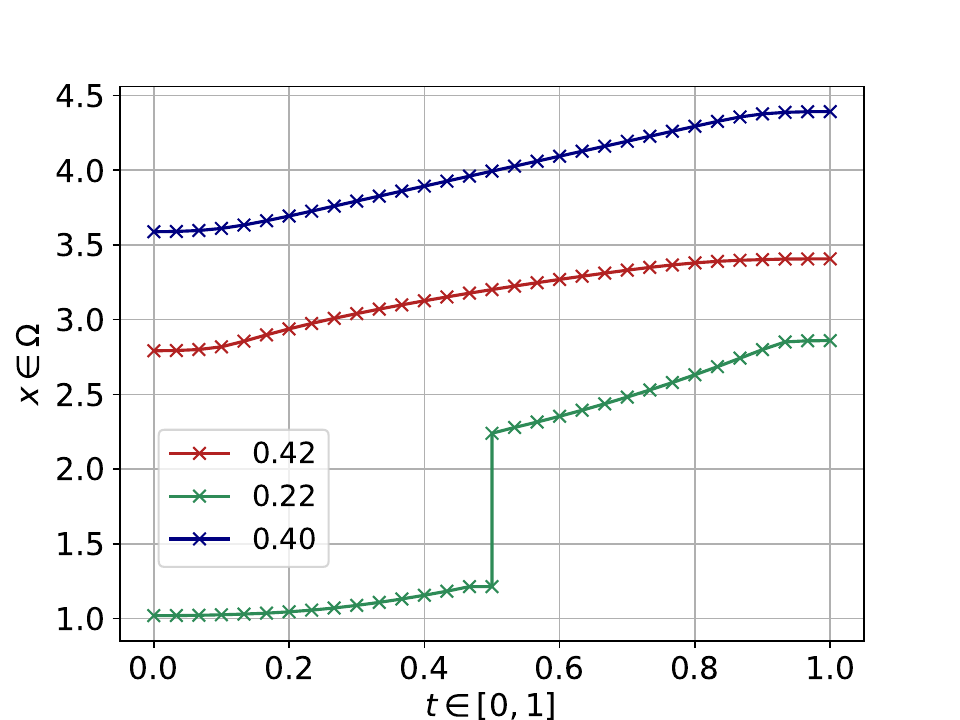}
\caption{$\alpha = 12$ and $\beta = 5$.}
\label{sf:Exp2a}
\end{subfigure}
\begin{subfigure}[b]{0.45\textwidth}
\centering
\includegraphics[trim=0 0.35cm 0 -1.2cm, width=8.2cm,height=6.4cm]{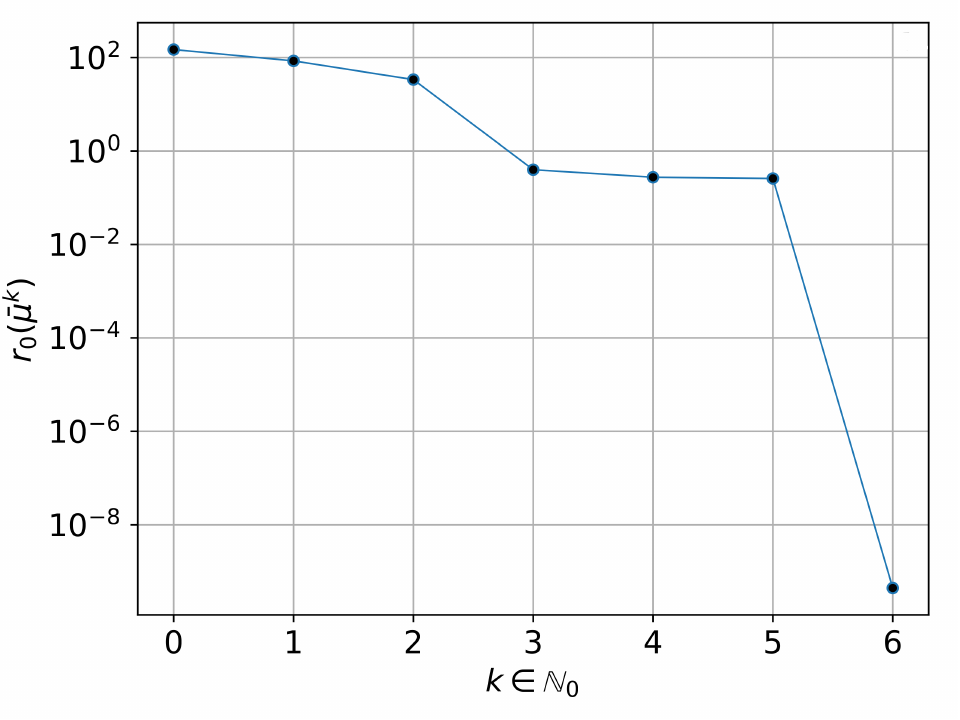}
\caption{Convergence of residuals (log scale).}
\label{sf:Exp2b}
\end{subfigure}
\caption{The larger value of $\beta$ reduces the variation of each of the curves --- close to $t = 0$ and $t=1$ the Diracs move with small velocity. The weights of the reconstructed curves are further scaled down because of the larger value of $\alpha$.}\label{fig:exp2}
\end{figure}

\underline{Three curves with noise:} In the second experiment, we consider the same ground truth as in the previous example (defined by \cref{eq:gt1,eq:gt2}).
Moreover, we perturb the measurements with Gaussian noise.
In particular, we add a matrix $\mathcal{N} \in \mathbb{R}^{L\times (M+1)}$ whose entries are realizations of normally distributed (with zero mean and standard deviation equal to $0.2$) random variables. 
Hence, the measurement $K_0\bar\mu^\dagger + \mathcal{N}$ is used.
Further, we take $\alpha = 5$ and $\beta = 3$. 
In general, we expect that our regularization allows for a reconstruction which is stable with respect to (sufficiently small) additive noise (since our regularizer is a natural extension of `static' regularizers with similar properties).
This is also due to \cref{stability} and the choice of our fidelity term.
The reconstruction is illustrated in \cref{fig:noisy}. 
\begin{figure}[h!]
\begin{subfigure}[b]{0.45\textwidth}
\centering
\includegraphics[width=8.2cm,height=6.4cm]{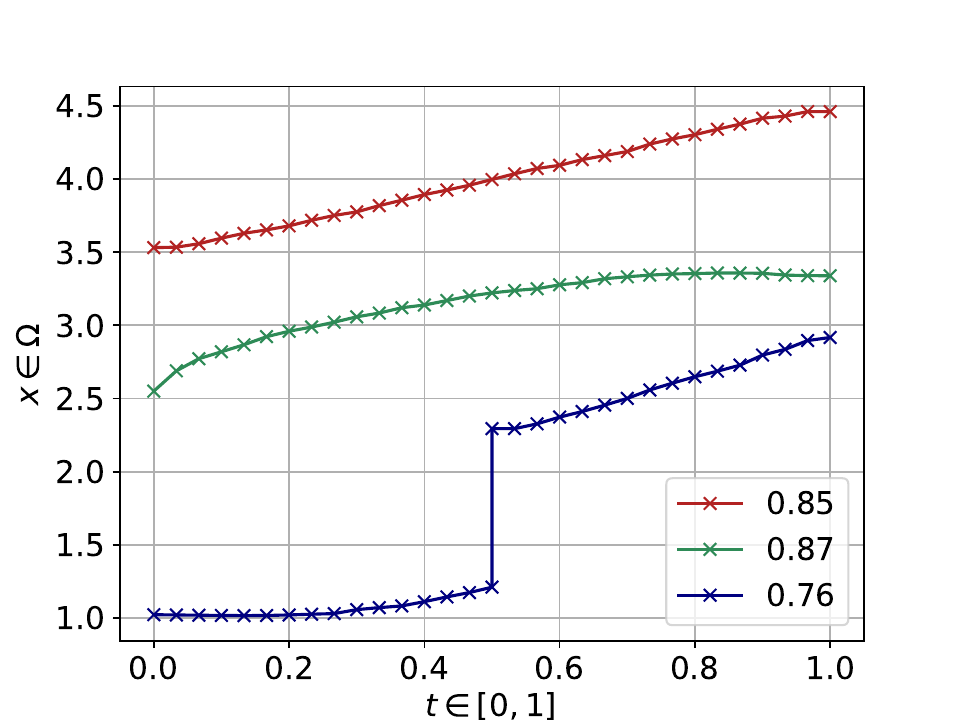}
\caption{$\alpha = 5$ and $\beta = 3$.}
\end{subfigure}
\begin{subfigure}[b]{0.45\textwidth}
\centering
\includegraphics[width=8.2cm,height=6.4cm]{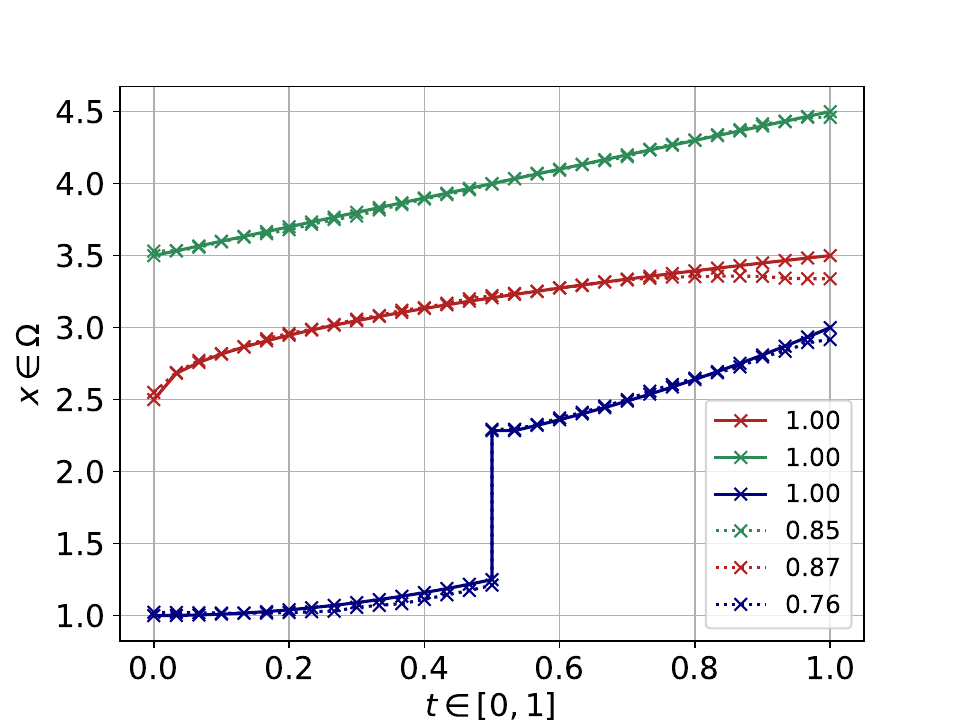}
\caption{Superimposition ground truth \& reconstruction.}
\end{subfigure}
\caption{Reconstruction of $\bar\mu^\dagger$ from $K_0\bar\mu^\dagger + \mathcal{N}$. The superimposition of ground truth (depicted with a solid line) and reconstruction highlights the regularization bias. As expected, the reconstruction is not strongly affected by the noise. Similar regularization effects as in the previous experiments can be observed.  }
\label{fig:noisy}
\end{figure}

\underline{Crossing curves:} In this experiment, we consider the problem of reconstructing two curves which cross each other at time $t = 0.5$. 
More specifically, we use the ground truth given by (see \cref{fig:gtcrossing})
\begin{equation*}
    \bar \mu^\dagger = \delta_{\bar \gamma^\dagger_1} + \delta_{\bar \gamma^\dagger_2}
\end{equation*}
where 
\begin{equation*}
    \bar\gamma^\dagger_1(t) = 1 + 3t\qquad\textup{and} \qquad \bar\gamma^\dagger_2(t) = 4 - 3t.
\end{equation*}
Again, we compute the reference data as $f = K_0 \bar \mu^\dagger \in \mathbb{R}^{L \times (M+1)}$.
The reconstruction is shown in \cref{fig:reconstructioncrossing}.
A similar effect to the one highlighted in \cite[Section 6.2.3]{KCFR22} (where a Wasserstein-$2$-type regularization is considered) can be observed.
\begin{figure}[h!]
\begin{subfigure}[b]{0.45\textwidth}
\centering
\includegraphics[width=8.2cm,height=6.4cm]{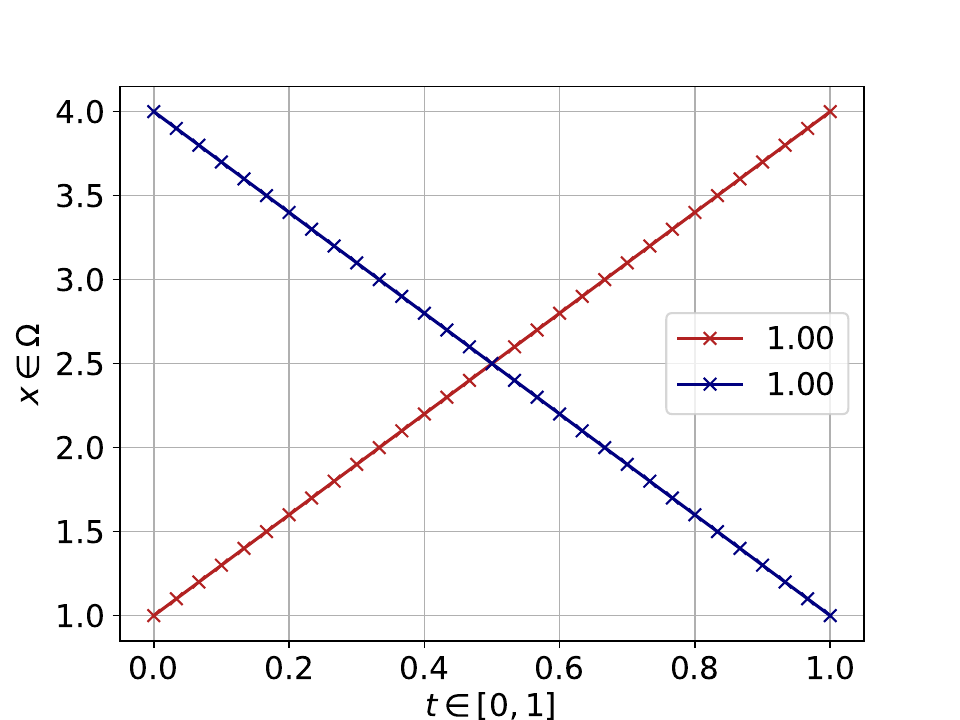}
\caption{Ground truth.}
\label{fig:gtcrossing}
\end{subfigure}
\begin{subfigure}[b]{0.45\textwidth}
\centering
\includegraphics[width=8.2cm,height=6.4cm]{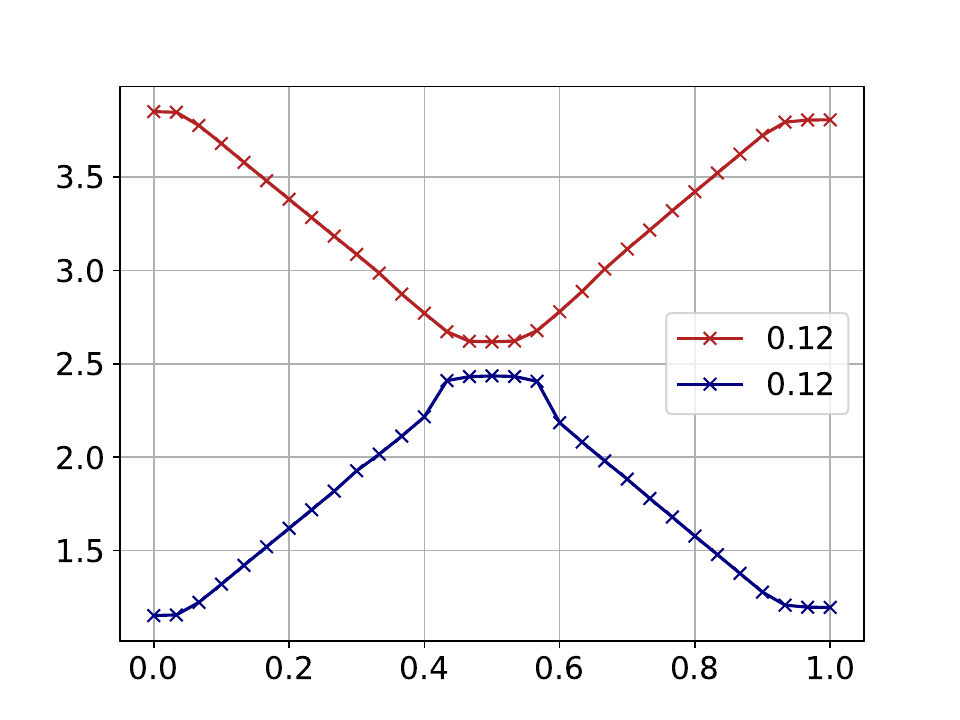}
\caption{Reconstruction for $\alpha = 13$ and $\beta = 5$.}
\label{fig:reconstructioncrossing}
\end{subfigure}
\caption{Reconstruction of two crossing curves. Our algorithm is unable to identify the correct trajectories due to the regularization (which encourages small variation). 
}
\end{figure}

\underline{Non-sparse ground truths:}
Finally, we reconstruct diffuse ground truths which cannot be written as linear combinations of Diracs in elements of $\mathcal{D}_E(\Omega)$. More precisely, we take $\bar \mu^\dagger, \bar \nu^\dagger \in \mathcal{D}_W(\Omega)$ defined by
\begin{equation*}
    \bar \mu^\dagger_t = \mathcal{L} \mres [1 + t, 4 - t] \qquad \textup{and} \qquad \bar \nu^\dagger_t = \mathcal{L} \mres [\zeta_1(t), \zeta_2(t)]
\end{equation*}
with 
\begin{equation*}
    \zeta_1(t) = 
    \begin{cases*}
        1+t&if $t<0.5$,\\
        2+t&if $t\geq 0.5$
    \end{cases*}
\qquad\textup{and}\qquad
\zeta_2(t) = 
\begin{cases*}
        2+t&if $t<0.5$,\\
        3+t&if $t\geq 0.5$.
    \end{cases*}    
\end{equation*}
 We define $f = K_0 \bar \mu^\dagger ,g = K_0 \bar \nu^\dagger \in \mathbb{R}^{L\times (M+1)}$ as the corresponding reference data.
 Note that $\bar\mu^\dagger$ is absolutely continuous while $\bar\nu^\dagger$ jumps at $t = 0.5$.
 We use regularization parameters $\alpha = 3$ and $\beta = 2$ for the reconstruction of $\bar \mu^\dagger$ and $\alpha = 5$ and $\beta = 2$ for the reconstruction of $\bar \nu^\dagger$.
 The results are shown in \cref{fig:exp2}. Recall that \cref{AlgDynamic} yields sparse reconstructions despite our choice of the ground truths.
\begin{figure}[h]
\begin{subfigure}[b]{0.45\textwidth}
\centering
\includegraphics[width=8.2cm,height=6.4cm]{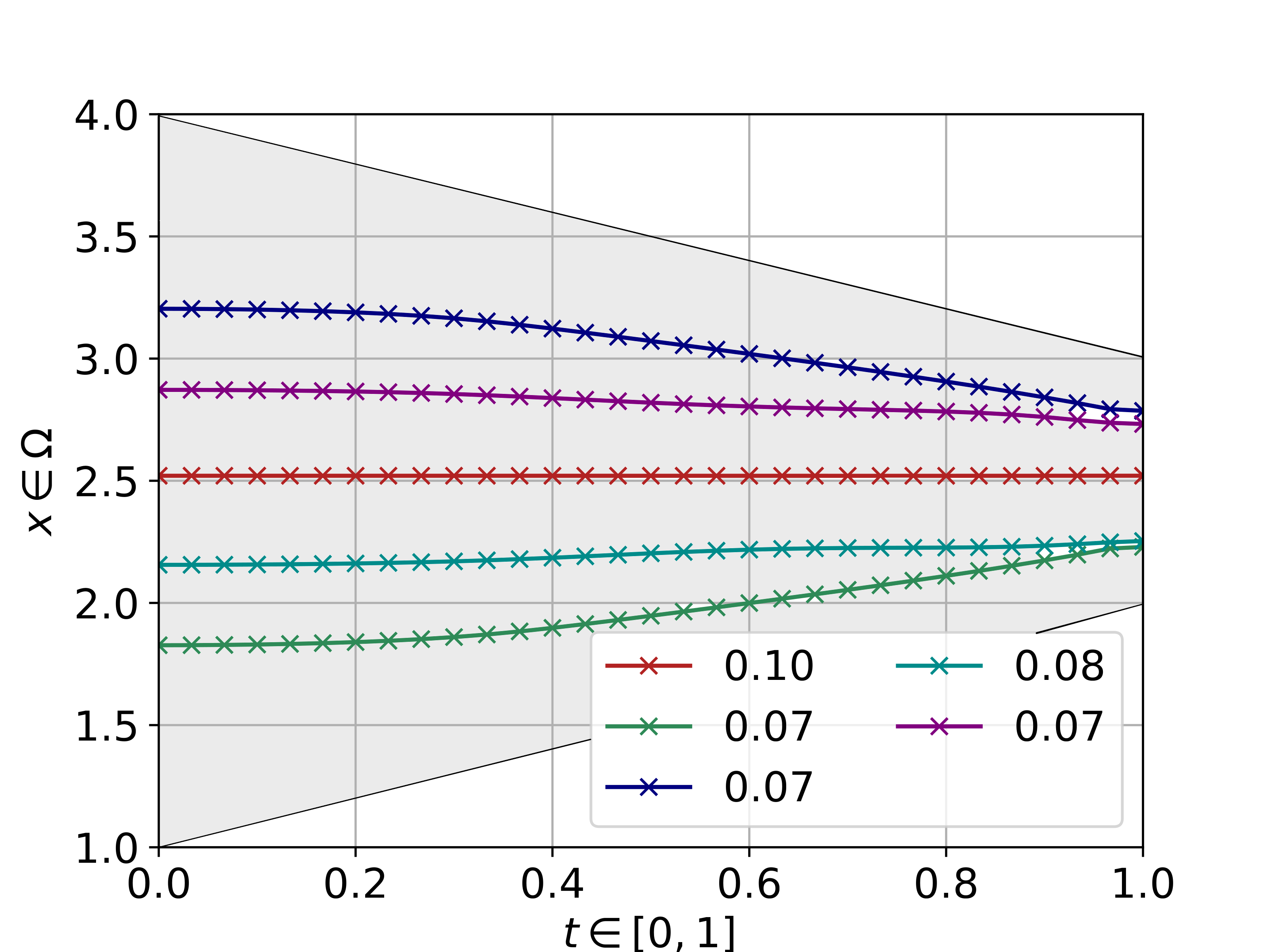}
\caption{Reconstruction of $\bar \mu^\dagger$ ($\alpha = 3, \beta = 2$).}
\end{subfigure}
\begin{subfigure}[b]{0.45\textwidth}
\centering
\includegraphics[width=8.2cm,height=6.4cm]
{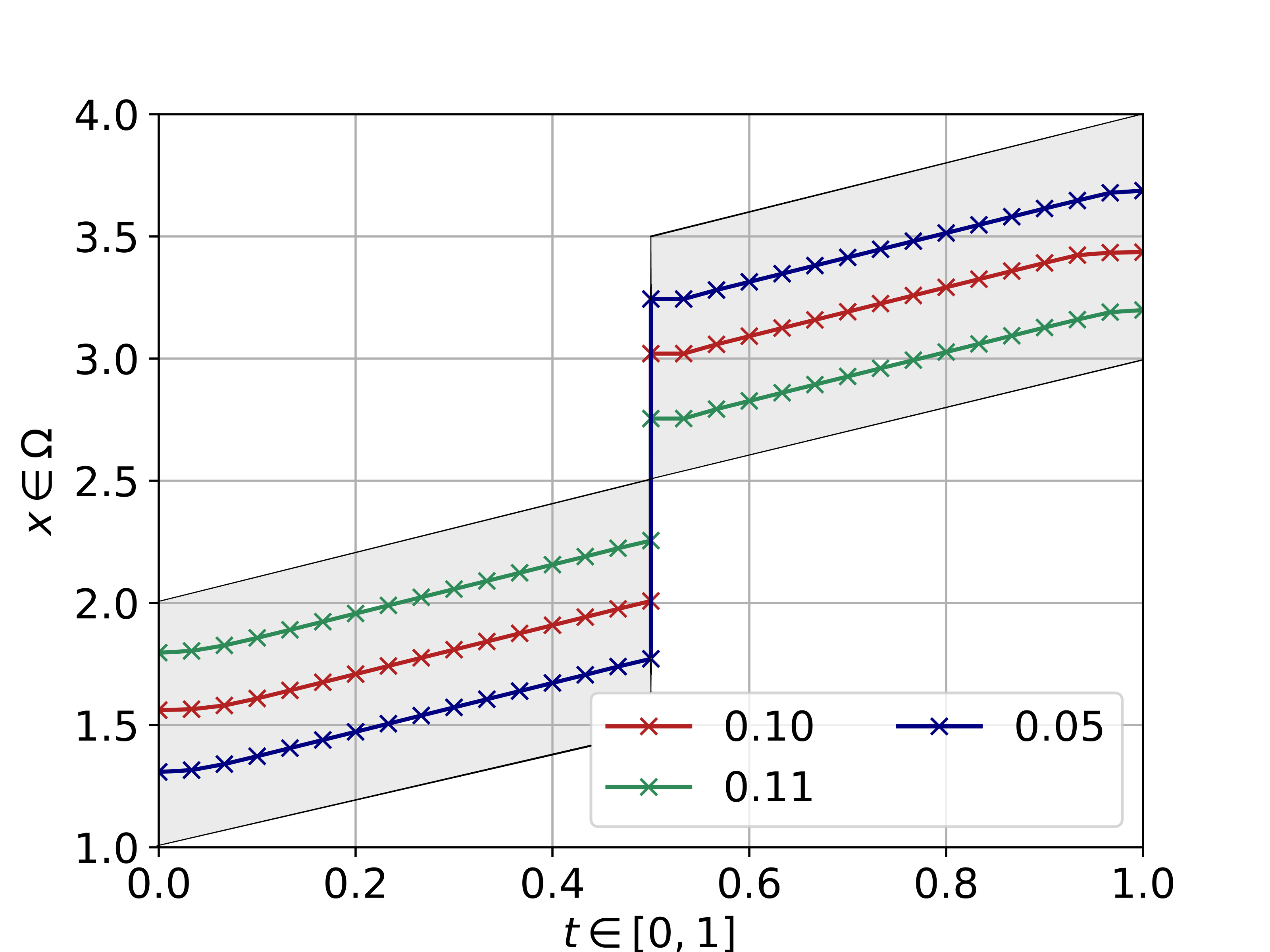}
\caption{Reconstruction of $\bar \nu^\dagger$ ($\alpha = 5$, $\beta = 2$).}
\end{subfigure}
\caption{Reconstruction of diffuse ground truths. As expected, curve $\bar \mu^\dagger$ is approximated by only using absolutely continuous curves while $\bar \nu^\dagger$ is approximated by curves with jumps.}
\label{fig:exp2}
\end{figure}

    \section{Acknowledgements}
    M.C.'s research was supported by the NWO-M1 grant Curve Ensemble Gradient Descents for Sparse Dynamic Problems (Grant Number OCENW.M.22.302).
	J.L.’s research was supported by the JSPS Postdoctoral Fellowship for Research in Japan and the KAKENHI Grant-in-Aid for Scientific Research (Grant Number JP24KF0215) awarded by the Japan Society for the Promotion of Science. J.L.\:would like to thank Carola-Bibiane Schönlieb for the hosting of a NoMADS secondment at the University of Cambridge, funded by the European Union's Horizon 2020 research and innovation programme (Grant Number 777826), during which this research collaboration began. J.L.’s research was also supported by the Deutsche Forschungsgemeinschaft (DFG, German Research Foundation) under the priority program SPP 1962, grant WI 4654/1-1, and under Germany’s Excellence Strategy EXC 2044-390685587, Mathematics Münster: Dynamics-Geometry-Structure. 
	\printbibliography
\end{document}